\theoremstyle{plain}
\newtheorem{thm}{Theorem}[section]
\newtheorem*{thm*}{Theorem}
\newtheorem{prop}[thm]{Proposition}
\newtheorem{lem}[thm]{Lemma}
\newtheorem{cor}[thm]{Corollary}
\theoremstyle{definition}
\theoremstyle{remark}
\newtheorem{rem}{Remark}[section]
\newcommand{\vol}{\operatorname{vol}}
\newcommand{\ric}{\operatorname{Ric}}
\newcommand{\Div}{\operatorname{div}}
\newcommand{\Hess}{\operatorname{Hess}}
\newcommand{\tr}{\operatorname{trace}}
\newcommand{\cut}{\mathrm{Cut}\,}
\newcommand{\inte}{\mathrm{Int}\,}
\newcommand{\bm}{\partial M}
\newcommand{\dm}{D(M,\partial M)}
\newcommand{\expp}{\exp^{\perp}}
\newcommand{\tbp}{T^{\perp}\bm}
\title[rigidity phenomena in manifolds with boundary]{Rigidity phenomena in manifolds\\ with boundary under a lower\\ weighted Ricci curvature bound}
\author{Yohei Sakurai}
\date{April 6, 2017.}
\address{Faculty of Pure and Applied Sciences, University of Tsukuba, Tennodai 1-1-1, Tsukuba, Ibaraki, 305-8571, Japan}
\email{sakurai@math.tsukuba.ac.jp}
\thanks{Research Fellow of Japan Society for the Promotion of Science for 2014-2016}
\subjclass[2010]{Primary 53C20}
\keywords{Manifold with boundary; Weighted Ricci curvature}
\begin{document}
\maketitle

\begin{abstract}
We study Riemannian manifolds with boundary under a lower $N$-weighted Ricci curvature bound for $N$ at most $1$,
and under a lower weighted mean curvature bound for the boundary.
We examine rigidity phenomena in such manifolds with boundary.
We conclude a volume growth rigidity theorem for the metric neighborhoods of the boundaries,
and various splitting theorems.
We also obtain rigidity theorems for the smallest Dirichlet eigenvalues for the weighted $p$-Laplacians.
\end{abstract}

\section{Introduction}\label{sec:Introduction}
In this paper,
we study Riemannian manifolds with boundary
under a lower weighted Ricci curvature bound,
and under a lower weighted mean curvature bound for the boundary.
We develop the preceding studies of the author \cite{Sa2}.
As explained below,
we examine rigidity phenomena in such manifolds with boundary
beyond the usual weighted setting.

For $n\geq 2$,
let $M$ be an $n$-dimensional Riemannian manifold with or without boundary with Riemannian metric $g$,
and let $f:M\to \mathbb{R}$ be a smooth function.
We denote by $\ric_{g}$ the Ricci curvature defined by $g$,
by $\nabla f$ the gradient of $f$,
and by $\Hess f$ the Hessian of $f$.
For $N\in (-\infty,\infty]$,
the \textit{$N$-weighted Ricci curvature} $\ric^{N}_{f}$ is defined as
\begin{equation}\label{eq:def of weighted Ricci curvature}
\ric^{N}_{f}:=\ric_{g}+\Hess f-\frac{\nabla f \otimes \nabla f}{N-n}
\end{equation}
if $N \in (-\infty,\infty)\setminus \{n\}$;
otherwise,
if $N=\infty$,
then $\ric^{N}_{f}:=\ric_{g}+\Hess f$;
if $N=n$,
and if $f$ is a constant function,
then $\ric^{N}_{f}:=\ric_{g}$;
if $N=n$,
and if $f$ is not constant,
then $\ric^{N}_{f}:=-\infty$ (\cite{BE}).
We notice that
the parameter $N$ has been usually chosen from $[n,\infty]$.

On manifolds without boundary under a lower $N$-weighted Ricci curvature bound,
many results have been already known in the usual weighted case of $N \in [n,\infty]$ (see e.g., \cite{Lo}, \cite{LV1}, \cite{LV2}, \cite{Q}, \cite{St2}, \cite{St3}, \cite{WW}).
Recently,
in the complemental weighted case of $N \in (-\infty,n)$,
several geometric properties have begun to be studied (see \cite{K}, \cite{KM}, \cite{M}, \cite{O3}, \cite{OT1}, \cite{OT2}, \cite{WY}).
Wylie \cite{W} has obtained a splitting theorem of Cheeger-Gromoll type (cf. \cite{CG}) in the complementary weighted case of $N \in (-\infty,1]$,
and asked a question whether
the splitting theorem can be extended to the remaining case of $N \in (1,n)$.

For manifolds with boundary under a lower $N$-weighted Ricci curvature bound,
and under a lower weighted mean curvature bound for the boundary,
the author \cite{Sa2} has studied rigidity phenomena in the usual weighted case of $N \in [n,\infty]$.
In the present paper,
we produce rigidity theorems in the complementary weighted case of $N \in (-\infty,1]$.
Our rigidity theorems in the case of $N \in (-\infty,1]$ give natural extensions of the corresponding results in \cite{Sa2}.

To prove our rigidity theorems,
we develop comparison theorems.
We prove Laplacian comparison theorems for the distance function from the boundary,
and volume comparison theorems for metric neighborhoods of the boundary.
The author \cite{Sa2} has shown such comparison theorems in the usual weighted case of $N \in [n,\infty]$.
For manifolds with boundary of non-negative $N$-weighted Ricci curvature,
and of non-negative weighted mean curvature for the boundary,
Wylie \cite{W} has shown a Laplacian comparison inequality for the distance function from a connected component of the boundary in the weighted case of $N \in (-\infty,1]$.
To conclude our comparison theorems,
we need slightly more complicated calculations than that done by the author \cite{Sa2},
and by Wylie \cite{W}.
Under an assumption concerning a subharmonicity of the distance function from the boundary,
we derive our rigidity theorems
from studies of the equality cases in our comparison theorems.

\subsection{Setting}\label{sec:setting}
We summarize our setting as follows:
For $n\geq 2$,
let $M$ be an $n$-dimensional, 
connected complete Riemannian manifold with boundary with Riemannian metric $g$.
The boundary $\bm$ is assumed to be smooth.
We denote by $d_{M}$ the Riemannian distance on $M$ induced from the length structure determined by $g$.
Let $f:M\to \mathbb{R}$ be a smooth function.
For the Riemannian volume measure $\vol_{g}$ on $M$,
let
\begin{equation}\label{eq:weighted measure}
m_{f}:=e^{-f}\, \vol_{g}.
\end{equation}
For $N\in (-\infty,\infty]$,
we denote by $\ric^{N}_{f}$ the $N$-weighted Ricci curvature (see (\ref{eq:def of weighted Ricci curvature})).
We note that
for $N_{1},N_{2} \in (-\infty,\infty]\setminus \{n\}$ with $N_{1} \leq N_{2}$,
if $N_{1},N_{2} \in (n,\infty]$ or $N_{1},N_{2} \in (-\infty,n)$,
then $\ric^{N_{1}}_{f}\leq \ric^{N_{2}}_{f}$;
if $N_{1} \in (-\infty,n)$ and $N_{2} \in (n,\infty]$,
then $\ric^{N_{2}}_{f}\leq \ric^{N_{1}}_{f}$.
We denote by $\ric^{N}_{f,M}$ the infimum of $\ric^{N}_{f}$ on the unit tangent bundle on the interior $\inte M$ of $M$.
For $x\in \bm$,
we denote by $u_{x}$ the unit inner normal vector on $\bm$ at $x$.
Let $H_{x}$ denote the mean curvature of $\bm$ at $x$ defined as the trace of the shape operator of $u_{x}$.
The \textit{$f$-mean curvature} $H_{f,x}$ at $x$ is defined by
\begin{equation}\label{eq:weighted mean curvature}
H_{f,x}:=H_{x}+g\left((\nabla f)_{x},u_{x}\right).
\end{equation}
We put $H_{f,\bm}:=\inf_{x\in \bm} H_{f,x}$.
Our main subject is a weighted Riemannian manifold $(M,d_{M},m_{f})$ with boundary such that
for $\kappa,\lambda \in \mathbb{R}$
and for $N \in (-\infty,1]$
we have $\ric^{N}_{f,M}\geq \kappa$ and $H_{f,\bm}\geq \lambda$.

\subsection{Volume growth rigidity}\label{sec:Volume growth rigidity}
Let $\rho_{\bm}:M\to \mathbb{R}$ be the distance function from $\bm$ defined as $\rho_{\bm}(p):=d_{M}(p,\bm)$.
For $r\in(0,\infty)$,
we put $B_{r}(\bm):=\{\,p\in M \mid \rho_{\bm}(p) \leq r\,\}$.
For $x\in \bm$,
let $\gamma_{x}:[0,T)\to M$ be the geodesic with initial conditions $\gamma_{x}(0)=x$ and $\gamma_{x}'(0)=u_{x}$.
We define a function $\tau:\bm \to \mathbb{R} \cup \{\infty\}$ by
\begin{equation}\label{eq:cut point}
\tau(x):=\sup \{t \in(0,\infty) \mid \rho_{\bm}(\gamma_{x}(t))=t\}.
\end{equation}
We define a function $F_{x}:[0,\tau(x)]\setminus \{\infty\} \to (0,\infty)$ by 
\begin{equation}\label{eq:warping function}
F_{x}(t):=e^{\frac{f(\gamma_{x}(t))-f(x)}{n-1}}.
\end{equation}
Notice that
if $f$ is constant,
then $F_{x}$ is equal to $1$.
For $\kappa,\lambda \in \mathbb{R}$,
we say that
$\kappa$ and $\lambda$ satisfy the \textit{subharmonic-condition} if
\begin{equation*}
\inf_{x \in \bm}\,\inf_{t \in (0,\tau(x))}\, \kappa \int^{t}_{0}\,F^{2}_{x}(s)\,ds \geq -\lambda.
\end{equation*}
We remark that
if $\kappa$ and $\lambda$ satisfy the subharmonic-condition,
then subharmonicity of $\rho_{\bm}$ is derived from $\ric^{N}_{f,M} \geq \kappa$ and $H_{f,\bm} \geq \lambda$
in the case of $N \in (-\infty,1]$ (see Lemma \ref{lem:Laplacian comparison}).
Note that
if $\kappa,\lambda \in [0,\infty)$,
then they satisfy the subharmonic-condition.
We denote by $h$ the induced Riemnnian metric on $\bm$.
For the Riemannian volume measure $\vol_{h}$ on $\bm$ induced from $h$,
we put $m_{f,\bm}:=e^{-f|_{\bm}}\, \vol_{h}$.

For an interval $I$,
and for a Riemannian manifold $M_{0}$ with Riemannian metric $g_{0}$,
let $\Phi:I \times M_{0} \to \mathbb{R}$ be a positive smooth function.
For each $x \in M_{0}$,
let $\Phi_{x}:I \to \mathbb{R}$ be the function defined as $\Phi_{x}(t):=\Phi(t,x)$.
We say that
a Riemannian manifold $(I \times M_{0},dt^{2}+\Phi^{2}_{x}(t)\,g_{0})$ is a \textit{twisted product space}.
When $\tau$ is infinity on $\bm$,
we define $[0,\infty)\times_{F}\bm$ as the twisted product space $([0,\infty)\times \bm,dt^{2}+F^{2}_{x}(t)\,h)$.

For the metric neighborhoods of the boundaries,
we prove an absolute volume comparison theorem of Heintze-Karcher type,
and a relative volume comparison theorem of Bishop-Gromov type (see Subsections \ref{sec:Absolute volume comparison} and \ref{sec:Relative volume comparison}).
We obtain rigidity results concerning the equality cases in those comparison theorems (see Subsection \ref{sec:Volume growth rigidity}).

We conclude the following volume growth rigidity theorem:
\begin{thm}\label{thm:volume growth rigidity}
Let $M$ be a connected complete Riemannian manifold with boundary,
and let $f:M\to \mathbb{R}$ be a smooth function.
Suppose that
$\bm$ is compact.
Let $\kappa \in \mathbb{R}$ and $\lambda \in \mathbb{R}$ satisfy the subharmonic-condition.
For $N \in (-\infty,1]$
we suppose $\ric^{N}_{f,M} \geq \kappa$,
and $H_{f,\bm} \geq \lambda$.
If we have
\begin{equation}\label{eq:volume growth rigidity}
\liminf_{r \to \infty}\frac{m_{f}(B_{r}(\bm))}{r} \geq m_{f,\bm}(\bm),
\end{equation}
then $(M,d_{M})$ is isometric to $([0,\infty) \times_{F} \bm,d_{[0,\infty) \times_{F} \bm})$.
Moreover,
if $N\in (-\infty,1)$,
then for every $x\in \bm$
the function $f\circ \gamma_{x}$ is constant on $[0,\infty)$;
in particular,
$(M,d_{M})$ is isometric to $([0,\infty) \times \bm,d_{[0,\infty) \times \bm})$.
\end{thm}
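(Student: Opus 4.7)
The plan is to exploit the absolute volume comparison of Heintze--Karcher type and the relative volume comparison of Bishop--Gromov type (both developed earlier in the paper, as outlined in Subsections~\ref{sec:Absolute volume comparison} and~\ref{sec:Relative volume comparison}), and then to analyze the resulting equality case. A direct computation using $F_{x}^{n-1}(t)=e^{f(\gamma_{x}(t))-f(x)}$ shows that the weighted volume element on the twisted product $[0,\infty)\times_{F}\partial M$ collapses to $e^{-f(x)}\,d\vol_{h}(x)\,dt$, so the slab $[0,r]\times \partial M$ carries weighted volume $r\cdot m_{f,\partial M}(\partial M)$. The absolute comparison, under the hypotheses $\ric^{N}_{f,M}\geq \kappa$, $H_{f,\partial M}\geq \lambda$, and the subharmonic-condition, should therefore yield $m_{f}(B_{r}(\partial M))\leq r\cdot m_{f,\partial M}(\partial M)$ for every $r>0$. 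Combined with \eqref{eq:volume growth rigidity}, this forces $\lim_{r\to\infty} m_{f}(B_{r}(\partial M))/r = m_{f,\partial M}(\partial M)$, and the relative monotonicity upgrades this to pointwise equality $m_{f}(B_{r}(\partial M))=r\cdot m_{f,\partial M}(\partial M)$ for \emph{every} $r>0$.

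Pointwise equality in the integrated absolute comparison propagates to equality in the underlying Jacobian/Laplacian comparison along $\gamma_{x}$ for every $x\in \partial M$ and every $t\in [0,\tau(x))$. Since no weighted volume can be lost past the cut locus in the equality case, $\tau(x)=\infty$ for all $x\in \partial M$, so the normal exponential map $\Phi:[0,\infty)\times \partial M \to M$, $\Phi(t,x):=\gamma_{x}(t)$, is globally defined, free of focal points, and surjective. Equality in the Riccati equation underlying the Laplacian comparison then pins down the shape operator of the level hypersurfaces $\rho_{\partial M}=t$ along each radial geodesic; integrating the Jacobi equation with this prescribed shape operator forces $\Phi^{\ast} g = dt^{2}+F_{x}^{2}(t)\,h$. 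Hence $\Phi$ is an isometry from $([0,\infty)\times_{F}\partial M,\, d_{[0,\infty)\times_{F}\partial M})$ onto $(M,d_{M})$, giving the first assertion.

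For the subcase $N\in(-\infty,1)$, observe that since $N<n$, formula \eqref{eq:def of weighted Ricci curvature} contributes on the radial direction the nonnegative term $g(\nabla f,\gamma_{x}')^{2}/(n-N)$, which is strictly positive unless $(f\circ\gamma_{x})'=0$. This term is discarded in passing from the bound on $\ric^{N}_{f}$ to the weaker Riccati inequality expressed purely in terms of $F_{x}$; in the equality case just extracted, this discarded nonnegative quantity must therefore vanish identically. Consequently $(f\circ\gamma_{x})'\equiv 0$ on $[0,\infty)$, so $f\circ \gamma_{x}$ is constant, $F_{x}\equiv 1$, and the twisted product degenerates to the ordinary Riemannian product $[0,\infty)\times \partial M$.

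The main technical obstacle is the second step: promoting the integrated equality to pointwise equality in the Riccati equation along \emph{every} radial geodesic, arbitrarily far from the compact boundary, and then using that equality to rule out focal points and to reconstruct $\Phi^{\ast} g$. The subharmonic-condition plays an essential role here, as it is precisely what guarantees the monotonicity of the weighted transverse Jacobian along each $\gamma_{x}$ on which both comparisons rest, and which allows the propagation of equality out to $t=\infty$.
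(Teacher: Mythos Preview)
Your proposal is correct and follows essentially the same route as the paper: combine the absolute Heintze--Karcher type bound (Lemma~\ref{lem:absolute volume comparison}) with the relative Bishop--Gromov type monotonicity (Theorem~\ref{thm:relative volume comparison}) to force $m_{f}(B_{r}(\partial M))=r\,m_{f,\partial M}(\partial M)$ for every $r>0$, deduce $\tau\equiv\infty$ on $\partial M$ (this is Lemma~\ref{lem:cut point lower bound}), and then extract the twisted-product metric from the equality case of the Laplacian/Jacobi comparison (Lemmas~\ref{lem:Equality in Basic comparison} and~\ref{lem:half rigidity}). One small correction for the $N<1$ step: the nonnegative quantity that is actually discarded and must vanish is not the raw term $g(\nabla f,\gamma_{x}')^{2}/(n-N)$ from~\eqref{eq:def of weighted Ricci curvature} (that term enters the Bochner identity with the opposite sign), but rather $\dfrac{(1-N)\,(f\circ\gamma_{x})'^{\,2}}{(n-1)(n-N)}$, which appears only after combining with the Cauchy--Schwarz estimate on $\Vert\Hess\rho_{\partial M}\Vert^{2}$; see~\eqref{eq:combination} and Remark~\ref{rem:equality in the lemma}.
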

When $\kappa=0$ and $\lambda=0$,
Theorem \ref{thm:volume growth rigidity} has been proved in the unweighted case in \cite{Sa1},
and in the usual weighted case in \cite{Sa2}.
\begin{rem}
We do not know whether
Theorem \ref{thm:volume growth rigidity} can be extended to the weighted case of $N \in (1,n)$.
\end{rem}
\begin{rem}
Under the same setting as in Theorem \ref{thm:volume growth rigidity},
we always have the following inequality (see Lemma \ref{lem:absolute volume comparison}):
\begin{equation}\label{eq:volume growth upper estimate}
\limsup_{r \to \infty}\frac{m_{f}(B_{r}(\bm))}{r} \leq m_{f,\bm}(\bm).
\end{equation}
Theorem \ref{thm:volume growth rigidity} is concerned with rigidity phenomena.
\end{rem}
We have the following corollary of Theorem \ref{thm:volume growth rigidity}:
\begin{cor}\label{cor:warped volume growth rigidity}
Under the same setting as in Theorem \ref{thm:volume growth rigidity},
if $N=1$ and $\kappa=0$,
and if we have $(\ref{eq:volume growth rigidity})$,
then there exist a function $f_{0}:[0,\infty)\to \mathbb{R}$,
and a Riemannian metric $h_{0}$ on $\bm$
such that $M$ is isometric to a warped product space $([0,\infty)\times \bm,dt^{2}+e^{2\frac{f_{0}(t)}{n-1}}h_{0})$.
\end{cor}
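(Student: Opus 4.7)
The plan is to upgrade the twisted product of Theorem \ref{thm:volume growth rigidity} to a warped product by observing that, in the twisted product model, $\partial_t$ is automatically a null direction of $\ric^1_f$; positive semidefiniteness then forces all off-diagonal entries $\ric^1_f(\partial_t,V)$ to vanish, and this vanishing pins $F_x$ down to a function of $t$ alone.

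By Theorem \ref{thm:volume growth rigidity}, $(M,d_M)$ is isometric to $([0,\infty)\times_F\bm,\, dt^2+F_x^2(t)\,h)$ with $F_x(t)=e^{(f(\gamma_x(t))-f(x))/(n-1)}$ and $F_x(0)=1$. If $F_x$ depends only on $t$, then choosing any antiderivative $f_0$ of $\partial_t f$ with $f_0(0)=0$ and setting $h_0:=h$ gives $F_x(t)=e^{f_0(t)/(n-1)}$ and the warped product metric $dt^2+e^{2f_0(t)/(n-1)}h_0$; so the whole task reduces to showing $F_x$ is independent of $x$.

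Writing $\Phi(t,x):=F_x(t)$, a direct computation in the twisted product $g=dt^2+\Phi^2 h$ (using that $t$-lines are unit-speed geodesics and that $\nabla_{\partial_t}V=(\Phi_t/\Phi)V$ for a fiber field $V$ extended $t$-invariantly) gives the Ricci identities
\begin{equation*}
\ric(\partial_t,\partial_t)=-(n-1)\frac{\Phi_{tt}}{\Phi},\qquad \ric(\partial_t,V)=-(n-2)\,V\!\left(\frac{\Phi_t}{\Phi}\right).
\end{equation*}
The explicit form of $F_x$ encodes the identity $\Phi_t/\Phi=(\partial_t f)/(n-1)$. Substituting this into the definition (\ref{eq:def of weighted Ricci curvature}) of $\ric^1_f$ and cancelling the cross terms coming from $\Hess f(\partial_t,V)$ and $(\nabla f\otimes\nabla f)(\partial_t,V)/(n-1)$, we obtain
\begin{equation*}
\ric^1_f(\partial_t,\partial_t)=0,\qquad \ric^1_f(\partial_t,V)=\frac{V(\partial_t f)}{n-1}.
\end{equation*}

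Since $\ric^1_{f,M}\geq 0$, the tensor $\ric^1_f$ is pointwise positive semidefinite on $\inte M$. The first identity exhibits $\partial_t$ as a null direction, so the Cauchy--Schwarz inequality for positive semidefinite bilinear forms forces $\ric^1_f(\partial_t,V)=0$, i.e.\ $V(\partial_t f)=0$ for every $V$ tangent to $\bm$. Hence $\partial_t f$ is a function of $t$ alone; integrating and using $F_x(0)=1$ shows $F_x$ is independent of $x$, completing the reduction. The main technical obstacle is the twisted-product Ricci computation, particularly the off-diagonal formula $\ric(\partial_t,V)=-(n-2)V(\Phi_t/\Phi)$---a routine but somewhat lengthy Christoffel-symbol calculation---after which the warped product structure follows almost formally from positive semidefiniteness.
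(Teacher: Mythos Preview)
Your proof is correct. The paper's own proof is the one-liner ``combine Theorem~\ref{thm:volume growth rigidity} with Proposition~\ref{prop:twisted to warped},'' where Proposition~\ref{prop:twisted to warped} is Wylie's twisted-to-warped result; what you have done is reprove that proposition inline via the same pointwise mechanism (compute $\ric^{1}_{f}(\partial_{t},\partial_{t})=0$ and $\ric^{1}_{f}(\partial_{t},V)=V(\partial_{t}f)/(n-1)$ in the twisted model, then use positive semidefiniteness to kill the mixed term), so the two arguments are essentially identical in substance, yours being self-contained rather than citing the black box.
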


\subsection{Splitting theorems}\label{sec:Splitting theorems}
In our setting,
we show Laplacian comparison theorems for $\rho_{\bm}$,
and study the equality cases (see Section \ref{sec:Comparison theorems}).

By using a Laplacian comparison theorem for $\rho_{\bm}$,
and that for Busemann functions,
we prove the following splitting theorem:
\begin{thm}\label{thm:splitting theorem}
Let $M$ be a connected complete Riemannian manifold with boundary,
and let $f:M\to \mathbb{R}$ be a smooth function such that $\sup f(M)<\infty$.
For $N \in (-\infty,1]$
we suppose $\ric^{N}_{f,M} \geq 0$,
and $H_{f,\bm} \geq 0$.
If for some $x_{0}\in \bm$
we have $\tau(x_{0})=\infty$,
then $(M,d_{M})$ is isometric to $([0,\infty) \times_{F} \bm,d_{[0,\infty) \times_{F} \bm})$.
Moreover,
if $N\in (-\infty,1)$,
then for every $x\in \bm$
the function $f \circ \gamma_{x}$ is constant on $[0,\infty)$;
in particular,
$(M,d_{M})$ is isometric to $([0,\infty) \times \bm,d_{[0,\infty) \times \bm})$.
\end{thm}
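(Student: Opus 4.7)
The plan is to adapt the Cheeger--Gromoll splitting argument to the boundary setting in the complementary weighted regime $N\in(-\infty,1]$. First, I note that $\kappa=0$ and $\lambda=0$ trivially satisfy the subharmonic-condition, so the Laplacian comparison (Lemma \ref{lem:Laplacian comparison}) supplies the one-sided weighted differential inequality for $\rho_{\bm}$ needed below. The hypothesis $\tau(x_{0})=\infty$ means $\gamma:=\gamma_{x_{0}}$ is a ray emanating orthogonally from $\bm$; I would use this ray to construct the associated Busemann function
\begin{equation*}
b(p):=\lim_{t\to\infty}\bigl(t-d_{M}(p,\gamma(t))\bigr),
\end{equation*}
which is $1$-Lipschitz on $M$ and satisfies $b(\gamma(s))=s$ for all $s\ge 0$.

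The next step is a two-sided sandwich. The triangle inequality combined with $d_{M}(\gamma(t),\bm)=t$ (a direct consequence of $\tau(x_{0})=\infty$) gives $t-d_{M}(p,\gamma(t))\le \rho_{\bm}(p)$, hence $b\le \rho_{\bm}$ on $M$, with equality along $\gamma$. To convert this into rigidity, I would establish a Laplacian comparison for $b$ under $\ric^{N}_{f,M}\ge 0$ with $N\le 1$, via a Bochner/index-form argument applied along asymptotic rays to $\gamma$; this is where the hypothesis $\sup f(M)<\infty$ is essential, since it provides the control on $f$ along asymptotic rays needed for the differential inequality to pass to the limit that defines $b$. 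Together with the inequality for $\rho_{\bm}$, the difference $\rho_{\bm}-b$ becomes a nonnegative function satisfying an elliptic inequality of the correct sign in the weak sense, and vanishing at every interior point of $\gamma$.

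The strong maximum principle for the relevant weighted Laplacian (modified in the manner of Wylie when $N\le 1$) then forces $b\equiv \rho_{\bm}$ on $M$. This pointwise equality propagates equality in the underlying comparison theorems: $\tau(x)=\infty$ for every $x\in\bm$, and $\expp:[0,\infty)\times\bm\to M$ becomes a diffeomorphism intertwining $g$ with $dt^{2}+F_{x}^{2}(t)\,h$. Invoking the equality characterization developed earlier in the paper yields the isometry $(M,d_{M})\cong([0,\infty)\times_{F}\bm,d_{[0,\infty)\times_{F}\bm})$.

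Finally, when $N\in(-\infty,1)$, the sharper rigidity of the equality case forces the contribution of the term $(\nabla f\otimes\nabla f)/(N-n)$ to vanish along every $\gamma_{x}$; since $N-n<0$, this pins down $(f\circ\gamma_{x})'\equiv 0$, so $f\circ\gamma_{x}$ is constant, $F_{x}\equiv 1$, and the twisted product reduces to the Riemannian product $[0,\infty)\times\bm$. I expect the principal obstacle to be the Laplacian comparison for the Busemann function $b$ itself in the complementary-weighted regime: one must adapt the Riccati computation to the modified weighted Laplacian appropriate for $N\le 1$, and verify that the resulting differential inequality survives the infinite limit defining $b$, which is precisely where the upper bound on $f$ enters in a crucial way.
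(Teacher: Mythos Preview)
Your overall strategy matches the paper's: compare the Busemann function $b=b_{\gamma_{x_0}}$ with $\rho_{\bm}$, use the weighted Laplacian inequalities for each, and invoke a maximum principle followed by the equality analysis of Lemma~\ref{lem:Equality in Laplacian comparison}. The inequality $b\le \rho_{\bm}$, the role of $\sup f<\infty$, and the $N<1$ rigidity for $f\circ\gamma_x$ are all identified correctly, and the $f$-subharmonicity of $b$ in the regime $N\le 1$ is already available (Wylie's result, stated here as Lemma~\ref{lem:subharmonicity of busemann function}).

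There is, however, a genuine gap in the step ``the strong maximum principle \dots forces $b\equiv\rho_{\bm}$ on $M$.'' The barrier subharmonicity of $b$ at a point $p$ is built from an asymptote for $\gamma_{x_0}$ emanating from $p$, and in a manifold with boundary such an asymptote may a priori touch $\bm$; Lemma~\ref{lem:subharmonicity of busemann function} only applies on a domain $U\subset\inte M$ where some asymptote from each point lies entirely in $\inte M$. You therefore cannot apply the maximum principle globally in one stroke. The paper handles this by working locally: Lemma~\ref{lem:asymptote} supplies a neighborhood $U$ of any $\gamma_{y_0}(l)$ on which asymptotes are interior, so $b_{\gamma_{y_0}}-\rho_{\bm}$ is $f$-subharmonic on $U$ and attains its maximum $0$ there; the Calabi maximum principle gives $b_{\gamma_{y_0}}=\rho_{\bm}$ on $U$, and Lemma~\ref{lem:busemann function} then yields $\tau(y)=\infty$ for nearby $y\in\bm$. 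This makes $\{y\in\bm_0:\tau(y)=\infty\}$ open (it is closed by continuity of $\tau$), hence all of $\bm_0$, and Lemma~\ref{lem:splitting lemma} gives $\cut\bm=\emptyset$ and $\bm$ connected. Only then does the global equality $\Delta_f\rho_{\bm}\equiv 0$ follow, feeding into Lemma~\ref{lem:Equality in Laplacian comparison}. Your proposal should incorporate this local-to-global (open/closed) argument rather than a direct global application of the maximum principle.
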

In the unweighted case,
Kasue \cite{K2} has proved Theorem \ref{thm:splitting theorem}
under the compactness assumption for the boundary (see also the work of Croke and Kleiner \cite{CK}). 
Theorem \ref{thm:splitting theorem} itself has been proved in the unweighted case in \cite{Sa1},
and in the usual weighted case in \cite{Sa2}.
\begin{rem}
We do not know whether
Theorem \ref{thm:splitting theorem} can be extended to the weighted case of $N \in (1,n)$.
\end{rem}
As a corollary of Theorem \ref{thm:splitting theorem},
we see the following:
\begin{cor}\label{cor:warped splitting theorem}
Under the same setting as in Theorem \ref{thm:splitting theorem},
if $N=1$,
and if for some $x_{0}\in \bm$
we have $\tau(x_{0})=\infty$,
then there exist a function $f_{0}:[0,\infty)\to \mathbb{R}$,
and a Riemannian metric $h_{0}$ on $\bm$ such that
$M$ is isometric to $([0,\infty)\times \bm,dt^{2}+e^{2\frac{f_{0}(t)}{n-1}}h_{0})$.
\end{cor}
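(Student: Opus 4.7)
The plan is to apply Theorem~\ref{thm:splitting theorem} with $N = 1$ to obtain a twisted product structure, and then upgrade it to a warped product by exploiting $\ric^{1}_{f} \geq 0$ in the mixed (radial--fiber) directions.

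Theorem~\ref{thm:splitting theorem} applied with $N = 1$ gives an isometry
\[
(M, d_{M}) \cong \bigl([0,\infty) \times_{F} \bm,\, d_{[0,\infty) \times_{F} \bm}\bigr)
\]
with twisted metric $dt^{2} + F_{x}(t)^{2}\,h$, where $F_{x}(t) = \exp\bigl((f(\gamma_{x}(t)) - f(x))/(n-1)\bigr)$. The corollary then reduces to showing that $F_{x}(t)$ does not depend on $x \in \bm$: once this is established, setting $f_{0}(t) := f(\gamma_{x_{0}}(t)) - f(x_{0})$ and $h_{0} := h$ yields $F_{x}(t) = e^{f_{0}(t)/(n-1)}$ and the desired warped product metric $dt^{2} + e^{2 f_{0}(t)/(n-1)}\,h_{0}$.

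To prove $x$-independence of $F_{x}(t)$, I would work directly on the twisted product. A Riccati-type computation using the umbilical shape operator $A = (\partial_{t}\log F_{x})\,\id$ of each slice $\{t\}\times \bm$, combined with the identity $\partial_{t}^{2} f = (n-1)\partial_{t}^{2}\log F_{x}$ forced by the definition of $F_{x}$, yields that the radial weighted Ricci vanishes identically: $\ric^{1}_{f}(\partial_{t},\partial_{t}) \equiv 0$ on the twisted product. Since $\ric^{1}_{f} \geq 0$ is a non-negative symmetric $2$-tensor in which $\partial_{t}$ is a null direction, the Cauchy--Schwarz inequality forces $\ric^{1}_{f}(\partial_{t}, V) = 0$ for every fiber vector $V$. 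A Codazzi-type computation of the mixed component, using that the slice mean curvature equals $\partial_{t}f$ and that the twisted product connection gives $\nabla_{\partial_{t}} V = (\partial_{t}\log F_{x})\,V$, then produces $\ric^{1}_{f}(\partial_{t}, V) = V(\partial_{t}f)/(n-1)$; so $V(\partial_{t}f) = 0$ for every fiber $V$. Hence $\partial_{t}(f\circ \gamma_{x})(t)$ depends only on $t$, and integrating from $t = 0$ gives the desired $x$-independence of $f(\gamma_{x}(t)) - f(x)$.

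The main obstacle is the Codazzi-type computation of $\ric^{1}_{f}(\partial_{t}, V)$ on the twisted product, in which the contributions from $\ric_{g}(\partial_{t},V) = -(n-2)V(\partial_{t}\log F_{x})$, $\Hess f(\partial_{t},V)$, and $(\nabla f \otimes \nabla f)(\partial_{t},V)/(n-1)$ must be carefully assembled---the last two nearly cancel but leave a residue proportional to $V(\partial_{t}f)/(n-1)$. This is where the $N = 1$ specialization genuinely enters, strengthening the twisted product conclusion of Theorem~\ref{thm:splitting theorem} to the warped product conclusion of the corollary.
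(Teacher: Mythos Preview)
Your approach is correct and coincides with the paper's: the paper applies Theorem~\ref{thm:splitting theorem} and then invokes Proposition~\ref{prop:twisted to warped} (Wylie's Proposition~2.2 in \cite{W}) as a black box, and the null-direction/Cauchy--Schwarz argument you sketch for the mixed component $\ric^{1}_{f}(\partial_{t},V)$ is precisely the content of that proposition. So you have essentially reproduced the paper's proof while unpacking the cited twisted-to-warped step.
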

In Theorem \ref{thm:splitting theorem},
by applying the Wylie splitting theorem in \cite{W} to the boundary,
we obtain a multi-splitting theorem (see Subsection \ref{sec:Multi-splitting}).
We also generalize a splitting theorem studied in \cite{K2} (and \cite{CK}, \cite{I})
for the case where
boundaries are disconnected (see Subsection \ref{sec:Variants of splitting theorems}).

\subsection{Eigenvalue rigidity}\label{subsec:Eigenvalue rigidity}
For $p\in [1,\infty)$,
the \textit{$(1,p)$-Sobolev space} $W^{1,p}_{0}(M,m_{f})$ \textit{on} $(M,m_{f})$ \textit{with compact support}
is defined as the completion of the set of all smooth functions on $M$ whose support is compact and contained in $\inte M$
with respect to the standard $(1,p)$-Sobolev norm.
We denote by $\Vert \cdot \Vert$ the standard norm induced from $g$,
and by $\Div$ the divergence with respect to $g$.
For $p\in [1,\infty)$,
the \textit{$(f,p)$-Laplacian} $\Delta_{f,p}\, \varphi$ for $\varphi \in W^{1,p}_{0}(M,m_{f})$ is defined by
\begin{equation*}
\Delta_{f,p}\,\varphi:=-e^{f}\,\Div \,\left(e^{-f} \Vert \nabla \varphi \Vert^{p-2}\, \nabla \varphi \right)
\end{equation*}
as a distribution on $W^{1,p}_{0}(M,m_{f})$.
A real number $\mu$ is said to be an \textit{$(f,p)$-Dirichlet eigenvalue} for $\Delta_{f,p}$ on $M$
if there exists $\varphi \in W^{1,p}_{0}(M,m_{f})\setminus \{0\}$
such that $\Delta_{f,p} \varphi=\mu \vert \varphi \vert^{p-2}\,\varphi$ holds  on $\inte M$ in a distribution sense on $W^{1,p}_{0}(M,m_{f})$. 
For $p\in [1,\infty)$,
the \textit{Rayleigh quotient} $R_{f,p}(\varphi)$ for $\varphi \in W^{1,p}_{0}(M,m_{f})\setminus \{0\}$ is defined as
\begin{equation*}
R_{f,p}(\varphi):=\frac{\int_{M}\, \Vert \nabla \varphi \Vert^{p}\,d\,m_{f}}{\int_{M}\,  \vert \varphi \vert^{p}\,d\,m_{f}}.
\end{equation*}
We put $\mu_{f,1,p}(M):=\inf_{\varphi} R_{f,p}(\varphi)$,
where the infimum is taken over all non-zero functions in $W^{1,p}_{0}(M,m_{f})$.
The value $\mu_{f,1,2}(M)$ is equal to the infimum of the spectrum of $\Delta_{f,2}$ on $(M,m_{f})$.
If $M$ is compact,
and if $p\in (1,\infty)$,
then $\mu_{f,1,p}(M)$ is equal to the infimum of the set of all $(f,p)$-Dirichlet eigenvalues on $M$.

Let $p\in (1,\infty)$.
For $D\in (0,\infty)$,
let $\mu_{p,D}$ be the positive minimum real number $\mu$ such that
there exists a function $\varphi:[0,D]\to \mathbb{R}$ satisfying
\begin{equation}\label{eq:model space eigenvalue problem}
\left(\vert \varphi'(t)\vert^{p-2} \varphi'(t)\right)'+\mu\, \vert \varphi(t)\vert^{p-2}\varphi(t)=0, \quad \varphi(0)=0, \quad \varphi'(D)=0.
\end{equation}
In the case where $p=2$,
we see $\mu_{2,D}= \pi^{2}(2D)^{-2}$.

For a positive number $D \in (0,\infty)$,
and for a connected component $\bm_{1}$ of $\bm$,
we denote by $[0,D]\times_{F} \bm_{1}$ the twisted product space $([0,D]\times \bm_{1},dt^{2}+F^{2}_{x}(t)\,h)$,
where for every $x\in \bm_{1}$
the function $F_{x}:[0,D] \to (0,\infty)$ is defined as (\ref{eq:warping function}).
The \textit{inscribed radius of $M$} is defined as
\begin{equation*}
\dm:=\sup_{p\in M} \rho_{\bm}(p).
\end{equation*}
Suppose that
$M$ is compact.
We say that
the metric space $(M,d_{M})$ is an \textit{$F$-model space}
if $M$ is isometric to either
(1) for a connected component $\bm_{1}$ of $\bm$,
the twisted product space $[0,2\dm]\times_{F} \bm_{1}$;
or (2) for an involutive isometry $\sigma$ of $\bm$ without fixed points,
the quotient space $([0,2\dm]\times_{F} \bm)/G_{\sigma}$,
where $G_{\sigma}$ is the isometry group on $[0,2\dm]\times_{F} \bm$ of the identity and the involute isometry $\hat{\sigma}$ defined by $\hat{\sigma}(t,x):=(2\dm-t,\sigma(x))$.
If $(M,d_{M})$ is an $F$-model space,
and if for every $x\in \bm$
the function $F_{x}$ is equal to $1$ on $[0,\dm]$,
then we call the $F$-model space $(M,d_{M})$ an \textit{equational model space}.
The notion of the equational model spaces coincides with that of the $(0,0)$-equational model spaces introduced in \cite{Sa2}.

We prove the following rigidity theorem for $\mu_{f,1,p}$:
\begin{thm}\label{thm:eigenvalue rigidity}
Let $M$ be a connected complete Riemannian manifold with boundary,
and let $f:M\to \mathbb{R}$ be a smooth function.
Suppose that
$M$ is compact.
Let $p\in (1,\infty)$,
and let $\kappa \in \mathbb{R}$ and $\lambda \in \mathbb{R}$ satisfy the subharmonic-condition.
For $N \in (-\infty,1]$
we suppose $\ric^{N}_{f,M}\geq \kappa$,
and $H_{f,\bm} \geq \lambda$.
For $D\in (0,\infty)$
we assume $\dm \leq D$.
Then
\begin{equation}\label{eq:eigenvalue rigidity}
\mu_{f,1,p}(M)\geq \mu_{p,D}.
\end{equation}
If the equality in $(\ref{eq:eigenvalue rigidity})$ holds,
then $\dm=D$,
and the metric space $(M,d_{M})$ is an $F$-model space.
Moreover,
if $N\in (-\infty,1)$,
then for every $x\in \bm$
the function $f\circ \gamma_{x}$ is constant on $[0,D]$;
in particular,
$(M,d_{M})$ is an equational model space.
\end{thm}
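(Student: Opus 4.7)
The plan is to establish the lower bound by a Barta-type barrier argument using the first eigenfunction of the one-dimensional model, and to deduce the rigidity from the equality case of the Laplacian comparison for $\rho_{\bm}$.

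Let $\psi:[0,D]\to[0,\infty)$ be the positive first eigenfunction of the model problem (\ref{eq:model space eigenvalue problem}), normalized so that $\psi(0)=0$, $\psi'(D)=0$, and $\psi,\psi'>0$ on $(0,D)$; in particular $\left((\psi')^{p-1}\right)'+\mu_{p,D}\psi^{p-1}=0$. Since $\dm\leq D$, the function $v:=\psi\circ\rho_{\bm}$ is a well-defined non-negative Lipschitz function on $M$ with $v=0$ on $\bm$ and $v>0$ on $\inte M$. A direct computation on the smooth locus of $\rho_{\bm}$ yields
\begin{equation*}
\Delta_{f,p}\,v = (\psi'(\rho_{\bm}))^{p-1}\,\Delta_{f,2}\,\rho_{\bm} + \mu_{p,D}\,v^{p-1},
\end{equation*}
where the second term uses the one-dimensional eigenvalue equation and the first uses $\Vert\nabla\rho_{\bm}\Vert=1$. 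Lemma \ref{lem:Laplacian comparison} together with the subharmonic-condition gives $\Delta_{f,2}\,\rho_{\bm}\geq 0$ on this locus, and the cut-locus contribution to the distributional Laplacian of $\rho_{\bm}$ has the favorable sign, so one obtains $\Delta_{f,p}\,v\geq\mu_{p,D}\,v^{p-1}$ as a distribution on $W^{1,p}_{0}(M,m_{f})$.

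Combining this with the Allegretto--Picone identity for the $p$-Laplacian, for every non-negative $u\in W^{1,p}_{0}(M,m_{f})$ one obtains
\begin{equation*}
\int_{M}\Vert\nabla u\Vert^{p}\,dm_{f} \geq \int_{M}\frac{u^{p}}{v^{p-1}}\,\Delta_{f,p}\,v\,dm_{f} \geq \mu_{p,D}\int_{M}u^{p}\,dm_{f},
\end{equation*}
and this extends to general $u$ by replacing $u$ with $\vert u\vert$; this proves (\ref{eq:eigenvalue rigidity}). If equality holds, the equality case of Picone forces $v$ to be a first $(f,p)$-Dirichlet eigenfunction, which in turn forces $\Delta_{f,2}\,\rho_{\bm}=0$ on $\{0<\rho_{\bm}<D\}$ with no singular mass from the cut locus, and in particular $\dm=D$. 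The equality case of the Laplacian comparison developed in Section \ref{sec:Comparison theorems} then forces $\{0<\rho_{\bm}<D\}$ to be isometric to the twisted product $(0,D)\times_{F}\bm$ with every $\gamma_{x}$ minimizing up to $t=D$; the focal behavior at $\rho_{\bm}=D$ produces the dichotomy of $F$-model spaces, depending on whether the map $x\mapsto\gamma_{x}(D)$ covers a second connected component of $\bm$ (type (1)) or is an involutive self-identification of a single component (type (2)). For $N\in(-\infty,1)$, the strict form of Lemma \ref{lem:Laplacian comparison} rules out any non-trivial warping: $f\circ\gamma_{x}$ must be constant on $[0,D]$ for every $x\in\bm$, so $F_{x}\equiv 1$ and $(M,d_{M})$ is an equational model space.

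The main obstacle is the equality-case analysis, namely promoting the pointwise equality along geodesics to the global $F$-model structure and correctly gluing the two halves of $M$ across the focal set $\{\rho_{\bm}=D\}$. Separating the two types of $F$-model space requires a detailed analysis of whether the exponential map from $\bm$ at parameter $D$ identifies two components or induces an involution on one, a dichotomy that has to be extracted from the comparison-rigidity results of the preceding sections.
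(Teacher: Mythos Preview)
Your overall strategy coincides with the paper's: build the barrier $v=\varphi_{p,D}\circ\rho_{\bm}$, combine the global $p$-Laplacian comparison (Proposition \ref{prop:global p-Laplacian comparison}) with the Picone inequality (Lemma \ref{lem:Picone identity}), and read off the rigidity from the equality case of the Laplacian comparison. The inequality part is correct and is exactly Proposition \ref{prop:inequality in eigenvalue rigidity}.

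The gap is in the equality analysis, specifically in how you obtain $\cut\bm\subset S_{D}(\bm)$ (equivalently $\tau(x)=D$ for all $x\in\bm$). You assert that ``no singular mass from the cut locus'' forces this, but that implication is not justified: the cut locus is a null set, and the boundary terms in the exhaustion argument of Proposition \ref{prop:global p-Laplacian comparison} are integrals over hypersurfaces approaching $\cut\bm$ whose limit could vanish even when $\cut\bm$ meets $\{\rho_{\bm}<D\}$. The equality in Remark \ref{rem:Equality case in global p-Laplacian comparison} only tells you that the comparison is sharp on the smooth locus $(0,\tau(x))$ for each $x$; it does not by itself pin down $\tau(x)$.

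The paper closes this gap differently. It first invokes elliptic regularity (Proposition \ref{prop:eigenfunction}) to produce a non-negative minimizer $\Psi\in C^{1,\alpha}(M)$ of the Rayleigh quotient, and then the Picone equality case forces $\Phi=\varphi_{p,D}\circ\rho_{\bm}$ to be a constant multiple of $\Psi$, hence $C^{1}$. Now at any $p_{0}\in\cut\bm$ the function $\rho_{\bm}$ fails to be differentiable (since, by the Jacobi-field equality already established, no cut point is a first conjugate point, so every cut point has at least two foot points); because $\Phi$ \emph{is} $C^{1}$ at $p_{0}$ and $\varphi'_{p,D}>0$ on $[0,D)$, one is forced to have $\varphi'_{p,D}(\rho_{\bm}(p_{0}))=0$, i.e.\ $\rho_{\bm}(p_{0})=D$. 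This is the step you are missing: the existence and $C^{1,\alpha}$-regularity of an extremal eigenfunction is what converts the measure-theoretic equality into a pointwise statement about the cut locus. Once $\cut\bm=S_{D}(\bm)$ is established, Lemma \ref{lem:conclude rigidity} handles the dichotomy you describe, and Lemma \ref{lem:Equality in Basic comparison} gives the constancy of $f\circ\gamma_{x}$ for $N\in(-\infty,1)$.
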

In the unweighted case,
Li and Yau \cite{LY} have obtained the estimate (\ref{eq:eigenvalue rigidity}),
and Kasue \cite{K3} has proved Theorem \ref{thm:eigenvalue rigidity} when $p=2,\kappa=0$ and $\lambda=0$.
In \cite{Sa2},
the author has proved Theorem \ref{thm:splitting theorem} in the usual weighted case when $\kappa=0$ and $\lambda=0$.
\begin{rem}
We do not know whether
Theorem \ref{thm:eigenvalue rigidity} can be extended to the weighted case of $N \in (1,n)$.
\end{rem}
Suppose that
$M$ is compact.
We say that
the metric space $(M,d_{M})$ is a \textit{warped model space} if
there exist a function $f_{0}:[0,2\dm] \to \mathbb{R}$,
and a Riemannian metric $h_{0}$ on $\bm$ such that
$M$ is isometric to either
(1) for a connected component $\bm_{1}$ of $\bm$,
the warped product space $([0,2\dm] \times \bm_{1},dt^{2}+e^{2\frac{f_{0}(t)}{n-1}}h_{0})$;
or (2) for an involutive isometry $\sigma$ of $\bm$ without fixed points,
the quotient space $([0,2\dm]\times \bm,dt^{2}+e^{2\frac{f_{0}(t)}{n-1}}h_{0})/G_{\sigma}$,
where $G_{\sigma}$ is the isometry group on $([0,2\dm]\times \bm,dt^{2}+e^{2\frac{f_{0}(t)}{n-1}}h_{0})$ of the identity and the involute isometry $\hat{\sigma}$ defined as $\hat{\sigma}(t,x):=(2\dm-t,\sigma(x))$.

We obtain the following corollary of Theorem \ref{thm:eigenvalue rigidity}:
\begin{cor}\label{cor:warped eigenvalue rigidity}
Under the same setting as in Theorem \ref{thm:eigenvalue rigidity},
if $N=1$ and $\kappa=0$,
and if the equality in $(\ref{eq:eigenvalue rigidity})$ holds,
then the metric space $(M,d_{M})$ is a warped model space.
\end{cor}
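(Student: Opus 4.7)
The plan is to deduce Corollary~\ref{cor:warped eigenvalue rigidity} by combining the rigidity conclusion of Theorem~\ref{thm:eigenvalue rigidity} with an analysis of the warping function $F_x(t)$ tailored to the endpoint case $N=1$, $\kappa=0$. First I would invoke Theorem~\ref{thm:eigenvalue rigidity}: under the equality assumption it gives $\dm=D$ together with the conclusion that $(M,d_M)$ is an $F$-model space. Hence $M$ is isometric either to a twisted product $[0,2\dm]\times_F \bm_1$ for some connected component $\bm_1$ of $\bm$, or to the quotient $([0,2\dm]\times_F \bm)/G_\sigma$ by an involutive isometry $\sigma$ of $\bm$. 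The remaining task is to upgrade each twisted-product metric $dt^2+F_x(t)^2\,h$ to a warped-product metric $dt^2+e^{2f_0(t)/(n-1)}\,h_0$ for some function $f_0:[0,2\dm]\to\mathbb{R}$ and some Riemannian metric $h_0$ on $\bm$.

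To effect this identification, I would examine the equality situation in the weighted Jacobi/Laplacian comparison (Lemma~\ref{lem:Laplacian comparison}) along each normal geodesic $\gamma_x$. The decisive feature of $N=1$, exploited by Wylie in \cite{W}, is that after the $(n-1)$-st root reparametrisation defining $F_x$ the trace Riccati equation becomes a linear second-order ODE in $F_x$ in which the $\Hess f$ and $\nabla f\otimes\nabla f$ contributions combine cleanly; with $\kappa=0$, equality in the comparison forces this ODE to hold with zero right-hand side. Solving it with $F_x(0)=1$ and with the data prescribed by $H_{f,x}$, together with the critical-point condition at $t=\dm$ implied by the $F$-model structure, one obtains an expression for $F_x(t)$ whose $x$-dependence, if any, is confined to an overall rescaling of the boundary metric. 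Setting $h_0:=h$ and defining $f_0(t)$ by $F_x(t)=e^{f_0(t)/(n-1)}$, the twisted-product metric on each slab coincides with $dt^2+e^{2f_0(t)/(n-1)}\,h_0$; the quotient construction in case~(2) carries over verbatim and yields the warped model structure.

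The main obstacle I foresee is ruling out $x$-dependence of $F_x$. Equality in the Jacobi comparison produces an ODE per geodesic, but a~priori the initial slope $F_x'(0)=g(\nabla f(x),u_x)/(n-1)$ could vary with $x\in\bm$. To remove this dependence I would use that the bound $H_{f,\bm}\geq\lambda$ must be saturated in the equality case, so $g(\nabla f(x),u_x)=\lambda-H_x$ is pinned down pointwise; combined with the fact that a minimiser realising $\mu_{f,1,p}(M)=\mu_{p,D}$ must be a radial function of $\rho_{\bm}$ on the $F$-model, this should force $F_x'(0)$, and hence the full solution $F_x(t)$, to be independent of $x$. This step is the technical heart of the argument and parallels the analogous reductions in Corollaries~\ref{cor:warped volume growth rigidity} and~\ref{cor:warped splitting theorem}.
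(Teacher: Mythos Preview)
Your first step---invoking Theorem~\ref{thm:eigenvalue rigidity} to obtain $\dm=D$ and the $F$-model structure---is exactly what the paper does. The divergence is in the upgrade from twisted to warped product.

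The paper does not re-enter the Riccati/Jacobi equality analysis at this point. It simply applies Proposition~\ref{prop:twisted to warped} (Wylie's result): once $M$ is known to be a twisted product $I\times_F\bm_1$ and $\ric^1_{f,M}\geq 0$ holds (this is precisely the hypothesis $N=1$, $\kappa=0$), a pointwise curvature computation on the twisted product forces $f(\gamma_x(t))=f_0(t)+f_1(x)$, whence $F_x(t)=e^{(f_0(t)-f_0(0))/(n-1)}$ is automatically independent of~$x$ and the metric is warped. The quotient case~(2) follows because the calculation is local. So the entire ``technical heart'' you describe is already packaged in a proposition stated in the preliminaries.

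Your alternative route has a genuine gap. Saturation of $H_{f,\bm}\geq\lambda$ gives $H_x+g((\nabla f)_x,u_x)=\lambda$ for every $x$, but this does \emph{not} pin down $g((\nabla f)_x,u_x)$ (and hence $F_x'(0)$) as a constant in~$x$, since $H_x$ itself may vary over~$\bm$. Likewise, the minimiser $\Phi=\varphi_{p,D}\circ\rho_{\bm}$ is radial by construction in the proof of Proposition~\ref{prop:inequality in eigenvalue rigidity}; its radiality is input, not a constraint that could rule out $x$-dependence of~$F_x$. The mechanism that actually kills the $x$-dependence is different: it is the mixed $\partial_t\partial_x$ component of $\ric^1_f$ on the twisted product, which must be non-negative yet equals a perfect square with a sign, forcing $\partial_x\partial_t f=0$. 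That is the content of Proposition~\ref{prop:twisted to warped}, and you should invoke it directly rather than attempt to extract the same conclusion from the one-dimensional ODE along each geodesic.
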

\subsection{Organization}\label{sec:Organization}
In Section \ref{sec:Preliminaries},
we prepare some notations
and recall the basic facts for Riemannian manifolds with boundary.
In Section \ref{sec:Comparison theorems},
we show Laplacian comparison results for $\rho_{\bm}$.
In Section \ref{sec:Volume comparisons},
we show volume comparison results,
and conclude Theorem \ref{thm:volume growth rigidity} and Corollary \ref{cor:warped volume growth rigidity}.
In Section \ref{sec:Splitting theorems},
we prove Theorem \ref{thm:splitting theorem} and Corollary \ref{cor:warped splitting theorem},
and discuss its variants.
In Section \ref{sec:Eigenvalue rigidity},
we prove Theorem \ref{thm:eigenvalue rigidity} and Corollary \ref{cor:warped eigenvalue rigidity}.
We also obtain an explicit lower bound for $\mu_{f,1,p}$ (see Subsection \ref{sec:Eigenvalue estimates}).

\subsection*{{\rm Acknowledgements}}\label{sec:Acknowledgements}
The author would like to express his gratitude to Professor Koichi Nagano for his constant advice and suggestions.
The author would also like to thank Professor Shin-ichi Ohta for his valuable comments.
The author would like to thank Professor William Wylie for his valuable advice concerning Proposition \ref{prop:twisted to warped}.

\section{Preliminaries}\label{sec:Preliminaries}
We refer to \cite{S} for the basics of Riemannian manifolds with boundary (cf. Section 2 in \cite{Sa1}, and in \cite{Sa2}).
\subsection{Riemannian manifolds with boundary}
For $n\geq 2$, 
let $M$ be an $n$-dimensional,
connected Riemannian manifold with (smooth) boundary
with Riemannian metric $g$.
For a point $p\in \inte M$, 
let $T_{p}M$ be the tangent space at $p$ on $M$,
and let $U_{p}M$ be the unit tangent sphere at $p$ on $M$.
We denote by $\Vert \cdot \Vert$ the standard norm induced from $g$.
If $v_{1},\dots,v_{k}\in T_{p}M$ are linearly independent,
then it holds that $\Vert v_{1}\wedge \cdots \wedge v_{k} \Vert=\sqrt{\det (g(v_{i},v_{j}))}$.

Let $d_{M}$ be the Riemannian distance on $M$
induced from the length structure determined by $g$.
For an interval $I$,
we say that
a curve $\gamma:I\to M$ is a \textit{normal minimal geodesic}
if for all $s,t\in I$
we have $d_{M}(\gamma(s),\gamma(t))=\vert s-t\vert$,
and $\gamma$ is a \textit{normal geodesic}
if for each $t\in I$
there exists an interval $J\subset I$ with $t\in J$ such that $\gamma|_{J}$ is a normal minimal geodesic.
If $M$ is complete with respect to $d_{M}$,
then the Hopf-Rinow theorem for length spaces (see e.g., Theorem 2.5.23 in \cite{BBI}) tells us that
the metric space $(M,d_{M})$ is a proper,
geodesic space;
namely,
all closed bounded subsets of $M$ are compact,
and 
for every pair of points in $M$
there exists a normal minimal geodesic connecting them.

For $i=1,2$,
let $M_{i}$ be connected Riemannian manifolds with boundary with Riemannian metric $g_{i}$.
For each $i$,
the boundary $\bm_{i}$ carries the induced Riemannian metric $h_{i}$.
We say that a homeomorphism $\Phi:M_{1}\to M_{2}$ is a \textit{Riemannian isometry with boundary} from $M_{1}$ to $M_{2}$ if $\Phi$ satisfies the following conditions:
\begin{enumerate}
 \item $\Phi|_{\inte M_{1}}:\inte M_{1} \to \inte M_{2}$ is smooth, and $(\Phi|_{\inte M_{1}})^{\ast} (g_{2})=g_{1}$;\label{enum:inner isom}
 \item $\Phi|_{\bm_{1}}:\bm_{1} \to \bm_{2}$ is smooth, and $(\Phi|_{\bm_{1}})^{\ast} (h_{2})=h_{1}$.\label{enum:bdry isom}
\end{enumerate}
If $\Phi:M_{1}\to M_{2}$ is a Riemannian isometry with boundary,
then the inverse $\Phi^{-1}$ is also a Riemannian isometry with boundary.
Notice that
there exists a Riemannian isometry with boundary from $M_{1}$ to $M_{2}$ if and only if
the metric space $(M_{1},d_{M_{1}})$ is isometric to $(M_{2},d_{M_{2}})$ (see e.g., Section 2 in \cite{Sa1}).

\subsection{Jacobi fields orthogonal to the boundary}\label{sec:Jacobi fields orthogonal to the boundary}
Let $M$ be a connected Riemannian manifold with boundary with Riemannian metric $g$.
For a point $x\in \bm$,
and for the tangent space $T_{x}\bm$ at $x$ on $\bm$,
let $T_{x}^{\perp} \bm$ be the orthogonal complement of $T_{x}\bm$ in the tangent space at $x$ on $M$.
Take $u\in T_{x}^{\perp}\bm$.
For the second fundamental form $S$ of $\bm$,
let $A_{u}:T_{x}\bm \to T_{x}\bm$ be the \textit{shape operator} for $u$ defined as
\begin{equation*}
g(A_{u}v,w):=g(S(v,w),u).
\end{equation*}
We denote by $u_{x}$ the unit inner normal vector at $x$.
The \textit{mean curvature} $H_{x}$ at $x$ is defined as $H_{x}:=\tr A_{u_{x}}$.
We denote by $\gamma_{x}:[0,T)\to M$ the normal geodesic with initial conditions $\gamma_{x}(0)=x$ and $\gamma_{x}'(0)=u_{x}$.
We say that a Jacobi field $Y$ along $\gamma_{x}$ is a $\bm$-\textit{Jacobi field} if $Y$ satisfies the following initial conditions:
\begin{equation*}
Y(0)\in T_{x}\bm, \quad Y'(0)+A_{u_{x}}Y(0)\in T_{x}^{\perp}\bm.
\end{equation*}
We say that $\gamma_{x}(t_{0})$ is a \textit{conjugate point} of $\bm$ along $\gamma_{x}$
if there exists a non-zero $\bm$-Jacobi field $Y$ along $\gamma_{x}$ with $Y(t_{0})=0$.
We denote by $\tau_{1}(x)$ the first conjugate value for $\bm$ along $\gamma_{x}$.
It is well-known that for all $x\in \bm$ and $t>\tau_{1}(x)$,
we have $t>\rho_{\bm}(\gamma_{x}(t))$.

For the normal tangent bundle $\tbp:=\bigcup_{x\in \bm} T_{x}^{\perp}\bm$ of $\bm$,
let $0(\tbp)$ be the zero-section $\bigcup_{x\in \bm} \{\,0_{x}\in T_{x}^{\perp}\bm\, \}$ of $T^{\perp}\bm$.
On an open neighborhood of $0(\tbp)$ in $\tbp$, 
the normal exponential map $\expp$ of $\bm$ is defined as $\expp(x,u):=\gamma_{x}(\Vert u\Vert)$
for $x\in \bm$ and $u\in T_{x}^{\perp}\bm$.

For $x\in \bm$ and $t\in [0,\tau_{1}(x))$,
we denote by $\theta(t,x)$ the absolute value of the Jacobian of $\expp$ at $(x,tu_{x})\in \tbp$.
For each $x\in \bm$,
we choose an orthonomal basis $\{e_{x,i}\}_{i=1}^{n-1}$ of $T_{x}\bm$.
For each $i$,
let $Y_{x,i}$ be the $\bm$-Jacobi field along $\gamma_{x}$ with initial conditions $Y_{x,i}(0)=e_{x,i}$ and $Y'_{x,i}(0)=-A_{u_{x}}e_{x,i}$.
Note that for all $x\in \bm$ and $t\in [0,\tau_{1}(x))$,
we have $\theta(t,x)=\Vert Y_{x,1}(t)\wedge \cdots \wedge Y_{x,n-1}(t)\Vert$.
This does not depend on the choice of the orthonormal bases.

\subsection{Cut locus for the boundary}\label{sec:Cut locus for the boundary}
We recall the basic properties of the cut locus for the boundary.
The basic properties seem to be well-known.
We refer to \cite{Sa1} for the proofs.

Let $M$ be a connected complete Riemannian manifold with boundary with Riemannian metric $g$.
For $p\in M$, 
we call $x\in \bm$ a \textit{foot point} on $\bm$ of $p$ if $d_{M}(p,x)=\rho_{\bm}(p)$.
Since $(M,d_{M})$ is proper, 
every point in $M$ has at least one foot point on $\bm$.
For $p\in \inte M$, 
let $x\in \bm$ be a foot point on $\bm$ of $p$.
Then there exists a unique normal minimal geodesic $\gamma:[0,l]\to M$ from $x$ to $p$
such that $\gamma=\gamma_{x}|_{[0,l]}$,
where $l=\rho_{\bm}(p)$.
In particular,
$\gamma'(0)=u_{x}$ and $\gamma|_{(0,l]}$ lies in $\inte M$.

Let $\tau:\bm\to \mathbb{R}\cup \{\infty\}$ be the function defined as (\ref{eq:cut point}).
By the property of $\tau_{1}$,
for all $x\in \bm$
we have $0<\tau(x)\leq \tau_{1}(x)$.
For the inscribed radius $D(M,\bm)$ of $M$,
from the definition of $\tau$,
we have $D(M,\bm)=\sup_{x\in \bm} \tau(x)$.
The function $\tau$ is continuous on $\bm$.

The continuity of $\tau$ implies the following (see e.g., Section 3 in \cite{Sa1}):
\begin{lem}\label{lem:bmcompact}
Suppose that
$\bm$ is compact.
Then $D(M,\bm)$ is finite if and only if $M$ is compact.
\end{lem}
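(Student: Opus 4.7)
The plan is to reduce both directions to elementary properties already recalled in the preliminaries, in particular the properness of $(M,d_{M})$ from the Hopf--Rinow theorem for length spaces and the identity $\dm=\sup_{x\in \bm}\tau(x)$.

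For the direction ``$M$ compact $\Rightarrow$ $\dm<\infty$'', I would simply observe that $\rho_{\bm}:M\to\mathbb{R}$ is $1$-Lipschitz (hence continuous), so if $M$ is compact then $\rho_{\bm}$ attains a finite maximum on $M$; by definition this maximum is $\dm$.

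For the converse, suppose $\bm$ is compact and $D:=\dm<\infty$. The first step is to note that $M=B_{D}(\bm)$: indeed for every $p\in M$ we have $\rho_{\bm}(p)\leq D$ by definition of $\dm$. Next, using that every $p\in M$ admits at least one foot point $x(p)\in\bm$ (which exists because $(M,d_{M})$ is proper, as recalled in the preliminaries), and fixing any base point $x_{0}\in \bm$, the triangle inequality gives
\begin{equation*}
d_{M}(p,x_{0})\leq d_{M}(p,x(p))+d_{M}(x(p),x_{0})\leq D+\diam(\bm),
\end{equation*}
and the right-hand side is finite because $\bm$ is compact. Thus $M$ is bounded in $(M,d_{M})$. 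Since $(M,d_{M})$ is proper, the closed bounded set $M$ is compact.

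There is no real obstacle here: the argument is a direct packaging of (i) continuity of $\rho_{\bm}$, (ii) existence of foot points on $\bm$, and (iii) properness of $(M,d_{M})$. The only subtlety worth flagging is that the bound on $d_{M}(p,x_{0})$ uses $\diam(\bm)$ measured with $d_{M}$ rather than with the induced intrinsic metric on $\bm$, but compactness of $\bm$ in $M$ is enough to guarantee this intrinsic-vs-ambient distinction is harmless.
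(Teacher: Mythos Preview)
Your argument is correct. Both directions are handled cleanly: the forward direction via continuity of $\rho_{\bm}$ on a compact space, and the converse via boundedness of $M$ in the proper metric space $(M,d_{M})$.

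The paper does not give its own proof here; it states the lemma and refers to \cite{Sa1}, prefacing it with ``The continuity of $\tau$ implies the following.'' That hint points to a different route: once $\bm$ is compact and $\dm=\sup_{x\in\bm}\tau(x)<\infty$, the continuous function $\tau$ is finite on $\bm$, so the set $\{(x,t u_{x}):x\in\bm,\ 0\le t\le \tau(x)\}\subset T^{\perp}\bm$ is compact, and $M$ is its image under the continuous map $\exp^{\perp}$. Your argument bypasses $\tau$ and the normal exponential structure entirely, relying only on properness and the existence of foot points; it is more elementary and would work verbatim in any proper metric space with a compact distinguished subset playing the role of $\bm$. The paper's route, by contrast, exploits the specific decomposition $M=D_{\bm}\sqcup\cut\bm$ that is already set up for later use. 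Either is fine for this lemma.
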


We put
\begin{align*}
TD_{\bm}  &:= \bigcup_{x\in \bm} \{\,t\,u_{x} \in T^{\perp}_{x}\bm \mid t\in[0,\tau(x)) \,\},\\
T\cut \bm &:= \bigcup_{x\in \bm} \{\,\tau(x)\,u_{x}\in T^{\perp}_{x}\bm \mid \tau(x)<\infty \,\},
\end{align*}
and define $D_{\bm}:=\expp (TD_{\bm})$ and $\cut \bm:=\expp (T\cut \bm)$.
We call $\cut \bm$ the \textit{cut locus for the boundary} $\bm$.
From the continuity of $\tau$,
the set $\cut \bm$ is a null set of $M$.
Furthermore,
we have
\begin{equation*}
\inte M=(D_{\bm}\setminus \bm) \sqcup \cut \bm,\quad M=D_{\bm}\sqcup \cut \bm.
\end{equation*}
This implies that
if $\cut\bm=\emptyset$,
then $\bm$ is connected.
The set $TD_{\bm}\setminus 0(T^{\perp}\bm)$ is a maximal domain in $T^{\perp}\bm$ on which $\expp$ is regular and injective.

In \cite{Sa2},
we have already known the following:
\begin{lem}\label{lem:splitting lemma}
If there exists a connected component $\bm_{0}$ of $\bm$ such that 
for all $x\in \bm_{0}$ we have $\tau(x)=\infty$,
then $\bm$ is connected and $\cut \bm=\emptyset$.
\end{lem}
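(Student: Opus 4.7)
\emph{Proof proposal.} The plan is to consider the map $\Phi \colon \bm_{0} \times [0,\infty) \to M$ given by $\Phi(x,t) := \gamma_{x}(t)$ and to show that its image $V$ is both open and closed in $M$. Since $V \supset \bm_{0}$ is nonempty and $M$ is connected, this will force $V = M$. Once that is established the conclusions follow immediately: any $y \in \bm$ equals $\gamma_{x}(t)$ for some $x \in \bm_{0}$ and $t \ge 0$, but $\gamma_{x}((0,\infty)) \subset \inte M$ forces $t = 0$, hence $y = x \in \bm_{0}$ and $\bm = \bm_{0}$; and any interior $p = \gamma_{x}(t)$ with $t > 0$ satisfies $t < \tau(x) = \infty$, placing $p$ in $D_{\bm} \setminus \cut \bm$, so $\cut \bm = \emptyset$.

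Openness of $V$ breaks into two cases. If $p = \gamma_{x}(t)$ with $t > 0$, then $tu_{x}$ lies in the open set $TD_{\bm} \setminus 0(\tbp)$ on which $\expp$ is a local diffeomorphism (by the assumption $\tau(x)=\infty$), so $\Phi$ is locally surjective onto a neighborhood of $p$. If $p \in \bm_{0}$, then because $\bm_{0}$ is a connected component of the smooth manifold $\bm$ it is open in $\bm$; combining this with the usual one-sided collar parametrization of a neighborhood of $\bm$ in $M$ shows that every nearby point of $p$ is of the form $\gamma_{x'}(s)$ with $x' \in \bm_{0}$ and $s \in [0,\varepsilon)$, and so lies in $V$.

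The main step, and the one place where the global hypotheses really bite, is the closedness of $V$. Suppose $p_{n} := \gamma_{x_{n}}(t_{n}) \to p$ in $M$. The hypothesis $\tau(x_{n}) = \infty$ yields $t_{n} = \rho_{\bm}(p_{n})$, so continuity of $\rho_{\bm}$ gives $t_{n} \to t := \rho_{\bm}(p) < \infty$, and the triangle inequality bounds $d_{M}(x_{n},p)$. Since $\bm_{0}$ is closed in $\bm$ (connected components of a locally connected space are closed) and $\bm$ is closed in $M$, $\bm_{0}$ is closed in $M$, and properness of $(M,d_{M})$, recorded in Section~\ref{sec:Preliminaries} as a consequence of completeness via Hopf--Rinow, furnishes a subsequence $x_{n_{k}}$ converging to some $x \in \bm_{0}$. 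Smooth dependence of geodesics on initial data then yields $p = \gamma_{x}(t) \in V$. The obstacle to anticipate, which would surface if $\bm_{0}$ were noncompact, is exactly this extraction of a convergent subsequence of foot points; properness dissolves it.
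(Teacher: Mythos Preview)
Your argument is correct. The open--closed connectedness argument for the image $V=\Phi(\bm_{0}\times[0,\infty))$ is exactly the right mechanism: openness at interior points uses that $\expp$ is a local diffeomorphism on $TD_{\bm}\setminus 0(T^{\perp}\bm)$ together with the openness of $\bm_{0}$ in $\bm$; openness at boundary points is the collar neighborhood; closedness uses properness of $(M,d_{M})$ to extract a limiting foot point in the closed set $\bm_{0}$, and then continuous dependence of $\gamma_{x}(t)$ on $(x,t)$. The deductions $\bm=\bm_{0}$ and $\cut\bm=\emptyset$ from $V=M$ are clean, since $\rho_{\bm}(\gamma_{x}(t))=t$ for all $t\geq 0$ whenever $\tau(x)=\infty$.

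Note only that the paper does not supply its own proof of this lemma here; it is quoted from the author's earlier work \cite{Sa2}. Your self-contained argument is in the same spirit as the standard one and would serve as a complete proof in place of the citation. One small stylistic point: writing ``$p\in D_{\bm}\setminus\cut\bm$'' is redundant, since $D_{\bm}$ and $\cut\bm$ are already disjoint by $M=D_{\bm}\sqcup\cut\bm$; simply ``$p\in D_{\bm}$'' suffices.
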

The function $\rho_{\bm}$ is smooth on $\inte M\setminus \cut \bm$.
For each $p\in \inte M\setminus \cut \bm$,
the gradient vector $\nabla \rho_{\bm}(p)$ of $\rho_{\bm}$ at $p$
is given by $\nabla \rho_{\bm}(p)=\gamma'(l)$,
where $\gamma:[0,l]\to M$ is the normal minimal geodesic from the foot point on $\bm$ of $p$ to $p$.

For $\Omega \subset M$,
we denote by $\bar{\Omega}$ the closure of $\Omega$ in $M$,
and by $\partial \Omega$ the boundary of $\Omega$ in $M$.
For a domain $\Omega$ in $M$ such that
$\partial \Omega$ is a smooth hypersurface in $M$,
we denote by $\vol_{\partial \Omega}$ the canonical Riemannian volume measure on $\partial \Omega$.

We have the following fact to avoid the cut locus for the boundary
that has been stated in \cite{Sa2} (see Lemma 2.6 in \cite{Sa2}):
\begin{lem}\label{lem:avoiding the cut locus}
Let $\Omega$ be a domain in $M$ such that
$\partial \Omega$ is a smooth hypersurface in $M$.
Then there exists a sequence $\{\Omega_{k}\}_{k\in \mathbb{N}}$ of closed subsets of $\bar{\Omega}$ such that
for every $k\in \mathbb{N}$,
the set $\partial \Omega_{k}$ is a smooth hypersurface in $M$ except for a null set in $(\partial \Omega,\vol_{\partial \Omega})$
satisfying the following properties:
\begin{enumerate}
\item for all $k_{1},k_{2}\in \mathbb{N}$ with $k_{1}<k_{2}$,
         we have $\Omega_{k_{1}}\subset \Omega_{k_{2}}$;
\item $\bar{\Omega} \setminus \cut \bm=\bigcup_{k\in \mathbb{N}}\,\Omega_{k}$;
\item for every $k\in \mathbb{N}$,
         and for almost every point $p \in \partial \Omega_{k}\cap \partial \Omega$ in $(\partial \Omega,\vol_{\partial \Omega})$,
         there exists the unit outer normal vector for $\Omega_{k}$ at $p$
         that coincides with the unit outer normal vector on $\partial \Omega$ for $\Omega$ at $p$;
\item for every $k\in \mathbb{N}$,
         on $\partial \Omega_{k}\setminus \partial \Omega$,
         there exists the unit outer normal vector field $\nu_{k}$ for $\Omega_{k}$ such that $g(\nu_{k},\nabla \rho_{\bm})\geq 0$.
\end{enumerate}
Moreover,
if $\bar{\Omega}=M$,
then for every $k\in \mathbb{N}$,
the set $\partial \Omega_{k}$ is a smooth hypersurface in $M$,
and satisfies $\partial \Omega_{k}\cap \bm=\bm$.
\end{lem}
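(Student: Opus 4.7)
The plan is to construct the $\Omega_k$ in Fermi-type coordinates using the normal exponential map $\expp$, by approximating the continuous cut function $\tau:\bm\to (0,\infty]$ from below by smooth functions. \textbf{Step 1:} Using continuity and positivity of $\tau$, a partition of unity on $\bm$ together with mollification produces smooth functions $\tau_k:\bm\to(0,\infty)$ with $\tau_k<\tau_{k+1}$, $\tau_k<\tau$, and $\tau_k\nearrow\tau$ pointwise (a soft cap $\tau_k\le k$ handles the points where $\tau=\infty$). \textbf{Step 2:} Set
\[
V_k:=\expp\bigl(\{(x,tu_{x})\in\tbp : x\in\bm,\; 0\le t\le\tau_k(x)\}\bigr),\qquad \Omega_k:=\bar{\Omega}\cap V_k.
\]
Since $\tau_k<\tau$, the map $\expp$ restricts to a diffeomorphism onto $V_k\setminus\bm$, so $V_k$ is closed in $M$. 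The monotonicity and pointwise convergence $\tau_k\nearrow\tau$, together with the identity $\bar{\Omega}\setminus\cut\bm=\bar{\Omega}\cap D_{\bm}$, give properties $(1)$ and $(2)$ immediately.

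For the smoothness and the normal properties, let $\pi:D_{\bm}\setminus\bm\to\bm$ denote the smooth foot-point projection (well defined and smooth off the cut locus). The ``outer'' piece $S_k:=\expp\{(x,\tau_k(x)u_x):x\in\bm\}$ is the zero set of the smooth function $F_k:=\rho_{\bm}-\tau_k\circ\pi$. Since $\tau_k\circ\pi$ is constant along the radial direction $\nabla\rho_{\bm}$, we have $\nabla(\tau_k\circ\pi)\perp\nabla\rho_{\bm}$, and hence
\[
g(\nabla F_k,\nabla\rho_{\bm})=\|\nabla\rho_{\bm}\|^{2}=1.
\]
Thus $F_k$ is a defining function of $S_k$ with nowhere-vanishing gradient, $S_k$ is a smooth hypersurface, and the outward unit normal $\nu_k:=\nabla F_k/\|\nabla F_k\|$ to $V_k$ along $S_k$ satisfies $g(\nu_k,\nabla\rho_{\bm})>0$, yielding property $(4)$. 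Using the decomposition $\partial\Omega_k\subset(\partial\Omega\cap V_k)\cup(\bar{\Omega}\cap\partial V_k)$, at any $p\in(\partial\Omega\cap V_k)\setminus(\bm\cup S_k)$ the sets $\Omega_k$ and $\Omega$ coincide on a neighborhood of $p$, so $\partial\Omega_k$ is smooth there and its outer unit normal equals that of $\Omega$; this is property $(3)$.

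The main technical obstacle is the \emph{null exceptional set} statement. The remaining bad points lie in $\partial\Omega\cap(\bm\cup S_k)$. The piece $\partial\Omega\cap\bm$ is the intersection of two smooth hypersurfaces in $M$, hence a smooth submanifold of codimension one inside $\partial\Omega$ and therefore $\vol_{\partial\Omega}$-null. The piece $\partial\Omega\cap S_k$ is null provided $F_k|_{\partial\Omega}$ is transverse to $0$; I would arrange this either by a generic perturbation of $\tau_k$ within the flexibility of Step~1 (the conditions $\tau_k<\tau$, smoothness, monotonicity, and pointwise limit are open) or by applying Sard's theorem to $F_k|_{\partial\Omega}$. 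Finally, in the special case $\bar{\Omega}=M$ one has $\Omega_k=V_k$, whose boundary is the disjoint union $\bm\sqcup S_k$ of two smooth hypersurfaces; hence $\partial\Omega_k$ is globally smooth and $\partial\Omega_k\cap\bm=\bm$, completing the proof.
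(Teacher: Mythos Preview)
The paper does not actually prove this lemma: it quotes the statement from the author's earlier work \cite{Sa2} and remarks that ``one can prove Lemma \ref{lem:avoiding the cut locus} by a similar method to the case of the cut locus for a single point'' (citing Cheeger \cite{Che2}). Your construction---approximate the continuous cut function $\tau$ from below by smooth $\tau_k\nearrow\tau$, push the subgraphs forward by $\expp$ to get $V_k$, and set $\Omega_k=\bar\Omega\cap V_k$---is precisely that standard method, so your approach coincides with what the paper invokes.

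One small point: your claim that $\partial\Omega\cap\bm$ is automatically $\vol_{\partial\Omega}$-null because it is ``the intersection of two smooth hypersurfaces'' tacitly uses transversality of $\partial\Omega$ and $\bm$, which is not asserted in the hypotheses. In the paper's applications this is harmless (either $\bar\Omega=M$, handled by your final paragraph, or $\partial\Omega\cap\bm=\emptyset$ as in Proposition \ref{prop:Kasue volume estimate}), but if you want a self-contained statement you should either add transversality as a standing hypothesis or note that the phrase ``smooth hypersurface in $M$'' is being read in the neat-submanifold sense.
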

As noticed in \cite{Sa2},
for the cut locus for a single point,
we have known a similar fact to Lemma \ref{lem:avoiding the cut locus} (see e.g., Theorem 4.1 in \cite{Che2}).
One can prove Lemma \ref{lem:avoiding the cut locus}
by a similar method to the case of the cut locus for a single point.

\subsection{Busemann functions and asymptotes}\label{subsec:Busemann functions and asymptotes}
Let $M$ be a connected complete Riemannian manifold with boundary.
A normal geodesic $\gamma:[0,\infty)\to M$ is said to be a \textit{ray}
if for all $s,t\in [0,\infty)$
it holds that $d_{M}(\gamma(s),\gamma(t))=|s-t|$.
For a ray $\gamma:[0,\infty)\to M$, 
the \textit{Busemann function} $b_{\gamma}:M\to \mathbb{R}$ \textit{of} $\gamma$ is defined as
\begin{equation*}
b_{\gamma}(p):=\lim_{t\to \infty}(t-d_{M}(p,\gamma(t))).
\end{equation*}

Take a ray $\gamma:[0,\infty)\to M$
and a point $p\in \inte M$,
and choose a sequence $\{t_{i}\}$ with $t_{i}\to \infty$.
For each $i$,
we take a normal minimal geodesic $\gamma_{i}:[0,l_{i}]\to M$ from $p$ to $\gamma(t_{i})$.
Since $\gamma$ is a ray,
it follows that $l_{i}\to \infty$.
Take a sequence $\{T_{j}\}$ with $T_{j}\to \infty$.
Using the fact that $M$ is proper,
we take a subsequence $\{\gamma_{1,i}\}$ of $\{\gamma_{i}\}$,
and a normal minimal geodesic $\gamma_{p,1}:[0,T_{1}]\to M$ from $p$ to $\gamma_{p,1}(T_{1})$
such that $\gamma_{1,i}|_{[0,T_{1}]}$ uniformly converges to $\gamma_{p,1}$.
In this manner,
take a subsequence $\{\gamma_{2,i}\}$ of $\{\gamma_{1,i}\}$
and a normal minimal geodesic $\gamma_{p,2}:[0,T_{2}]\to M$ from $p$ to $\gamma_{p,2}(T_{2})$
such that $\gamma_{2,i}|_{[0,T_{2}]}$ uniformly converges to $\gamma_{p,2}$,
where $\gamma_{p,2}|_{[0,T_{1}]}=\gamma_{p,1}$.
By means of a diagonal argument,
we obtain a subsequence $\{\gamma_{k}\}$ of $\{\gamma_{i}\}$
and a ray $\gamma_{p}$ in $M$ such that
for every $t\in (0,\infty)$
we have $\gamma_{k}(t)\to \gamma_{p}(t)$ as $k\to \infty$.
We call such a ray $\gamma_{p}$ an \textit{asymptote for} $\gamma$ \textit{from} $p$.

The following lemmas have been shown in \cite{Sa1}.
\begin{lem}\label{lem:busemann function}
Suppose that for some $x\in \bm$
we have $\tau(x)=\infty$.
Take $p\in \inte M$.
If $b_{\gamma_{x}}(p)=\rho_{\bm}(p)$,
then $p\notin \cut \bm$.
Moreover,
for the unique foot point $y$ on $\bm$ of $p$,
we have $\tau(y)=\infty$.
\end{lem}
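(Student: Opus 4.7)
The first step is to record the basic upper estimate $b_{\gamma_{x}} \leq \rho_{\bm}$ on $M$. Since $\tau(x)=\infty$, the ray $\gamma_{x}$ realizes the distance to $\bm$ forever, so $d_{M}(\gamma_{x}(t),\bm)=t$ for every $t$. The triangle inequality applied to any foot point of $p$ then gives $d_{M}(p,\gamma_{x}(t)) \geq t-\rho_{\bm}(p)$, which upon taking the limit yields $b_{\gamma_{x}}(p) \leq \rho_{\bm}(p)$, with the same argument at every point.

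Next, I would exploit the equality $b_{\gamma_{x}}(p)=\rho_{\bm}(p)$ by propagating it along an asymptote. Choose an asymptote $\gamma_{p}\colon[0,\infty)\to M$ for $\gamma_{x}$ from $p$, obtained as a limit of normal minimal geodesics from $p$ to $\gamma_{x}(t_{i})$ as in Subsection~\ref{subsec:Busemann functions and asymptotes}. The standard identity $b_{\gamma_{x}}(\gamma_{p}(s))=b_{\gamma_{x}}(p)+s$ for $s\geq 0$ follows by computing $d_{M}(\gamma_{p}(s),\gamma_{x}(t_{i}))=l_{i}-s$ along the approximating geodesics (combined with $1$-Lipschitz continuity of $b_{\gamma_{x}}$). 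Combined with the first step and the trivial inequality $\rho_{\bm}(\gamma_{p}(s)) \leq \rho_{\bm}(p)+s$, this forces
\begin{equation*}
\rho_{\bm}(\gamma_{p}(s))=\rho_{\bm}(p)+s \quad\text{for every } s\geq 0.
\end{equation*}

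The heart of the argument is then a concatenation. Let $l:=\rho_{\bm}(p)$ and let $y\in \bm$ be a foot point of $p$, so that $\gamma_{y}|_{[0,l]}$ is a normal minimal geodesic ending at $p$. Define $c\colon[0,l+s]\to M$ by gluing $\gamma_{y}|_{[0,l]}$ to $\gamma_{p}|_{[0,s]}$ at $p$. Then $c$ is a unit-speed curve of length $l+s$ from $y\in\bm$ to $\gamma_{p}(s)$, and by the previous display $d_{M}(\gamma_{p}(s),\bm)=l+s$. Hence $c$ realizes the distance from $\bm$, so $c$ is a normal minimal geodesic from $\bm$; in particular $c$ is smooth at $t=l$, meaning $\gamma_{p}$ is the extension of $\gamma_{y}$ past $p$. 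Since $s>0$ was arbitrary, $\gamma_{y}$ is defined and distance-realizing on $[0,\infty)$, i.e.\ $\tau(y)=\infty$, and $p=\gamma_{y}(l)$ with $l<\tau(y)$, so $p\in D_{\bm}\setminus\cut\bm$. Uniqueness of the foot point $y$ is then automatic from $p\notin\cut\bm$, since $\expp$ is injective on $TD_{\bm}\setminus 0(T^{\perp}\bm)$ as recalled in Subsection~\ref{sec:Cut locus for the boundary}.

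The main point where one must be careful is the smoothness of the concatenation at the junction $p$; the trick is to avoid computing anything about the angle between $\gamma_{y}'(l)$ and $\gamma_{p}'(0)$ directly and instead invoke the rigidity statement that a unit-speed curve whose endpoints realize the length as distance is automatically a minimizing geodesic, which is what the identity $\rho_{\bm}(\gamma_{p}(s))=\rho_{\bm}(p)+s$ gives us. Everything else is bookkeeping with the Busemann function.
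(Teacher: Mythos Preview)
The paper does not supply its own proof of this lemma; it simply cites \cite{Sa1}. So there is no in-paper argument to compare against, and your proposal has to be judged on its own merits.

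Your argument is correct and is essentially the standard one. A couple of small points are worth making explicit so that the concatenation step is airtight in the manifold-with-boundary setting. First, once you have established $\rho_{\bm}(\gamma_{p}(s))=\rho_{\bm}(p)+s>0$ for all $s\geq 0$, it follows immediately that the asymptote $\gamma_{p}$ lies entirely in $\inte M$; this is what guarantees that the concatenated curve $c$ lies in $\inte M$ except at its initial point $y$. Second, the reason the minimizing curve $c$ has no corner at $p$ is precisely that $p\in\inte M$: on a neighborhood of $p$ contained in $\inte M$ the usual first-variation argument applies, forcing $\gamma_{y}'(l)=\gamma_{p}'(0)$. You gesture at this (``a unit-speed curve whose endpoints realize the length as distance is automatically a minimizing geodesic''), but in a manifold with boundary a length-minimizer can in principle have corners where it touches $\bm$, so it is worth saying that the interior location of the junction is what rules this out here. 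With those clarifications your proof is complete.
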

\begin{lem}\label{lem:asymptote}
Suppose that for some $x\in \bm$
we have $\tau(x)=\infty$.
For $l \in (0,\infty)$,
put $p:=\gamma_{x}(l)$.
Then there exists $\epsilon \in (0,\infty)$ such that
for all $q\in B_{\epsilon}(p)$,
all asymptotes for the ray $\gamma_{x}$ from $q$ lie in $\inte M$.
\end{lem}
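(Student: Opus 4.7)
The plan is to use the Busemann function $b_{\gamma_{x}}$ of the distinguished ray as a lower barrier for $\rho_{\bm}$ and thereby force any asymptote from a nearby point to remain a positive distance from $\bm$. Since $\tau(x)=\infty$, the ray $\gamma_{x}$ realizes $\rho_{\bm}(\gamma_{x}(t))=t$ for every $t\geq 0$. Combining this with the $1$-Lipschitz property of $\rho_{\bm}$, for every $q\in M$ and every $t\geq 0$,
\begin{equation*}
t-d_{M}(q,\gamma_{x}(t))\leq t-\bigl(\rho_{\bm}(\gamma_{x}(t))-\rho_{\bm}(q)\bigr)=\rho_{\bm}(q),
\end{equation*}
so that $b_{\gamma_{x}}\leq \rho_{\bm}$ on all of $M$.

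Next I would establish the propagation identity $b_{\gamma_{x}}(\gamma_{q}(s))=b_{\gamma_{x}}(q)+s$ for every $s\geq 0$ along any asymptote $\gamma_{q}$ for $\gamma_{x}$ from $q$. The inequality $\leq$ is immediate from the triangle inequality applied to $d_{M}(\gamma_{q}(s),\gamma_{x}(t))$. For the reverse inequality, I would use the construction of $\gamma_{q}$ as a subsequential limit of normal minimal geodesics $\gamma_{q,i}:[0,l_{i}]\to M$ joining $q$ to $\gamma_{x}(t_{i})$ with $t_{i}\to\infty$: for each fixed $s>0$, minimality gives $d_{M}(\gamma_{q,i}(s),\gamma_{x}(t_{i}))=l_{i}-s$, and because $l_{i}=d_{M}(q,\gamma_{x}(t_{i}))$ we have $t_{i}-l_{i}\to b_{\gamma_{x}}(q)$. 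Combined with $\gamma_{q,i}(s)\to \gamma_{q}(s)$ and the fact that the map $t\mapsto t-d_{M}(\gamma_{q}(s),\gamma_{x}(t))$ is non-decreasing (so its limit agrees with any subsequential limit), one obtains
\begin{equation*}
\rho_{\bm}(\gamma_{q}(s))\geq b_{\gamma_{x}}(\gamma_{q}(s))=b_{\gamma_{x}}(q)+s\quad\text{for all }s\geq 0.
\end{equation*}

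To finish, observe that $b_{\gamma_{x}}(\gamma_{x}(l))=l$ directly from the definition, and that $b_{\gamma_{x}}$ is $1$-Lipschitz as a pointwise limit of uniformly $1$-Lipschitz functions. Hence for any $\epsilon \in (0,l)$ and every $q\in B_{\epsilon}(p)$, $b_{\gamma_{x}}(q)\geq l-\epsilon >0$, whence $\rho_{\bm}(\gamma_{q}(s))\geq l-\epsilon >0$ for every $s\geq 0$; in particular every asymptote $\gamma_{q}$ from $q$ stays in $\inte M$.

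I expect the only delicate step to be the propagation identity along the asymptote, since $\gamma_{q}$ is only given as a subsequential limit of minimal geodesics and the Busemann limit is taken over the full continuous parameter $t\to\infty$. The argument above sidesteps this by exploiting the monotonicity of $t\mapsto t-d_{M}(\cdot,\gamma_{x}(t))$, which guarantees that evaluation along the specific sequence $\{t_{i}\}$ produced by the asymptote construction already reproduces $b_{\gamma_{x}}(\gamma_{q}(s))$. Once this identity is secured, the conclusion follows immediately from the Lipschitz estimate for $b_{\gamma_{x}}$ together with the barrier inequality $b_{\gamma_{x}}\leq \rho_{\bm}$.
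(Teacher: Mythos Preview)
Your argument is correct. The paper itself does not prove this lemma; it merely states it and cites the author's earlier work \cite{Sa1}. Your proof is therefore a self-contained substitute, and in fact yields a slightly sharper conclusion: any $\epsilon\in(0,l)$ works, not merely some unspecified $\epsilon$.

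The three ingredients you use are exactly the right ones. The barrier inequality $b_{\gamma_{x}}\leq\rho_{\bm}$ follows from $\rho_{\bm}(\gamma_{x}(t))=t$ and the $1$-Lipschitz property of $\rho_{\bm}$, as you wrote. The propagation identity $b_{\gamma_{x}}(\gamma_{q}(s))=b_{\gamma_{x}}(q)+s$ along an asymptote is the standard fact, and you were right to flag the only subtlety: the asymptote is defined via a \emph{sub}sequence $\{t_{k}\}$, while the Busemann limit is over all $t\to\infty$. Your appeal to the monotonicity of $t\mapsto t-d_{M}(\cdot,\gamma_{x}(t))$ is exactly what closes this gap, since a monotone limit equals any of its subsequential limits. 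Finally, the $1$-Lipschitz bound on $b_{\gamma_{x}}$ gives $b_{\gamma_{x}}(q)\geq l-\epsilon>0$ on $B_{\epsilon}(p)$, and chaining the three facts forces $\rho_{\bm}(\gamma_{q}(s))\geq l-\epsilon>0$ for all $s\geq 0$, so the entire asymptote avoids $\bm$.

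Note also that your estimate $\rho_{\bm}(q)\geq l-\epsilon>0$ ensures $q\in\inte M$, so the asymptote construction from $q$ (which the paper defines only for interior points) is legitimate.
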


\subsection{Weighted Riemannian manifolds with boundary}
Let $M$ be a connected complete Riemannian manifold with boundary,
and let $f:M\to \mathbb{R}$ be a smooth function.
For a smooth function $\varphi$ on $M$,
the \textit{weighted Laplacian $\Delta_{f}\varphi$ for $\varphi$} is defined by
\begin{equation*}\label{eq:weighted Laplacian}
\Delta_{f} \varphi:=\Delta \varphi+g(\nabla f,\nabla \varphi),
\end{equation*}
where $\Delta \varphi$ is the Laplacian for $\varphi$ defined as the minus of the trace of its Hessian.
Note that
$\Delta_{f}$ coincides with the $(f,2)$-Laplacian $\Delta_{f,2}$.

It seems that
the following formula of Bochner type is well-known (see \cite{Lic}, and Chapter 14 in \cite{V}).
\begin{prop}[\cite{Lic}]\label{prop:Bochner formula}
For every smooth function $\varphi$ on $M$,
we have
\begin{equation*}
-\frac{1}{2}\,\Delta_{f}\,\Vert \nabla \varphi \Vert^{2}=\ric^{\infty}_{f}(\nabla \varphi)+\Vert \Hess \varphi \Vert^{2}-g\left(\nabla \Delta_{f}\,\varphi,\nabla \varphi \right),
\end{equation*}
where $\Vert \Hess \varphi \Vert$ is the Hilbert-Schmidt norm of $\Hess \varphi$.
\end{prop}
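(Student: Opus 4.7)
My plan is to derive this from the classical (unweighted) Bochner formula by expanding the extra terms that come from the drift $\nabla f$. Since the paper takes $\Delta\varphi = -\tr \Hess\varphi$, the classical Bochner identity reads
\begin{equation*}
-\tfrac{1}{2}\,\Delta\,\Vert \nabla \varphi \Vert^{2}
= \ric(\nabla \varphi)+\Vert \Hess \varphi \Vert^{2}-g\!\left(\nabla \Delta \varphi,\nabla \varphi \right),
\end{equation*}
which I would quote as a standard fact on $(M,g)$ (valid on $\inte M$, and pointwise on $\bm$). The whole task is to transform each term in this formula using the definitions $\Delta_{f}=\Delta+g(\nabla f,\nabla\cdot)$ and $\ric^{\infty}_{f}=\ric+\Hess f$, and show that the remainders match.

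First I would compute the correction on the left-hand side:
\begin{equation*}
-\tfrac{1}{2}\,\Delta_{f}\,\Vert \nabla \varphi \Vert^{2}
= -\tfrac{1}{2}\,\Delta\,\Vert \nabla \varphi \Vert^{2}
- \tfrac{1}{2}\,g\!\left(\nabla f,\nabla\Vert\nabla\varphi\Vert^{2}\right).
\end{equation*}
A direct Leibniz-rule calculation (using $\nabla\tfrac{1}{2}\Vert\nabla\varphi\Vert^{2}=\Hess\varphi(\nabla\varphi,\cdot)^{\sharp}$) gives
\begin{equation*}
\tfrac{1}{2}\,g\!\left(\nabla f,\nabla\Vert\nabla\varphi\Vert^{2}\right)
=\Hess\varphi(\nabla f,\nabla\varphi).
\end{equation*}
Next I would compute the correction on the right-hand side. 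Since $\Delta_{f}\varphi-\Delta\varphi=g(\nabla f,\nabla\varphi)$, taking the gradient and pairing with $\nabla\varphi$ yields, by the Leibniz rule,
\begin{equation*}
g\!\left(\nabla \Delta_{f} \varphi,\nabla \varphi\right)
= g\!\left(\nabla \Delta \varphi,\nabla \varphi\right)
+ \Hess f(\nabla\varphi,\nabla\varphi)
+ \Hess\varphi(\nabla f,\nabla\varphi),
\end{equation*}
where I use $g(\nabla_{X}\nabla f,Y)=\Hess f(X,Y)$ and similarly for $\varphi$.

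Substituting the classical Bochner identity into the expanded left-hand side and collecting the two correction terms, the cross term $\Hess\varphi(\nabla f,\nabla\varphi)$ appears with opposite signs and cancels, while $\Hess f(\nabla\varphi,\nabla\varphi)$ combines with $\ric(\nabla\varphi)$ to produce exactly $\ric^{\infty}_{f}(\nabla\varphi)$. This yields the claimed identity. There is no real obstacle here: the only delicate point is keeping the sign convention $\Delta=-\tr\Hess$ consistent and correctly identifying $\nabla\bigl(g(\nabla f,\nabla\varphi)\bigr)$ via the two Leibniz contributions, so I would write out that step carefully and let the remaining terms balance algebraically.
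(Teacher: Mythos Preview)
Your derivation is correct: starting from the classical Bochner identity (with the sign convention $\Delta=-\tr\Hess$) and expanding the drift terms is exactly the standard route, and your bookkeeping of the two corrections $\Hess\varphi(\nabla f,\nabla\varphi)$ and $\Hess f(\nabla\varphi,\nabla\varphi)$ is accurate. Note, however, that the paper does not actually prove this proposition; it simply quotes it as a well-known weighted Bochner formula with references to Lichnerowicz and Villani, so there is no ``paper's own proof'' to compare against --- you have supplied the (routine) verification that the cited statement is consistent with the paper's conventions.
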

Notice that
for every $x \in \bm$,
and for every $t \in (0,\tau(x))$,
the value $\Delta \rho_{\bm}(\gamma_{x}(t))$ is equal to the mean curvature $H_{x,t}$ of the $t$-level set of $\rho_{\bm}$ at $\gamma_{x}(t)$ toward $\nabla \rho_{\bm}$.
In our weighted case,
by the definition of the weighted Laplacian,
we see the following:
\begin{lem}\label{lem:Laplacian and mean curvature}
Take $x\in \bm$.
Then for every $t \in (0,\tau(x))$,
the value $\Delta_{f}\rho_{\bm}(\gamma_{x}(t))$ is equal to
the $f$-mean curvature $H_{f,x,t}$ of the $t$-level set of $\rho_{\bm}$ at $\gamma_{x}(t)$ toward $\nabla \rho_{\bm}$ defined as
\begin{equation*}
H_{f,x,t}:=H_{x,t}+g(\nabla f,\nabla \rho_{\bm})(\gamma_{x}(t)).
\end{equation*}
In particular,
$\Delta_{f}\rho_{\bm}(\gamma_{x}(t))$ tends to $H_{f,x}$ as $t \to 0$,
where $H_{f,x}$ is the $f$-mean curvature of $\bm$ at $x$ defined as  $(\ref{eq:weighted mean curvature})$.
\end{lem}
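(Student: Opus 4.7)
The proof is essentially a direct unpacking of the definitions, so the plan is to simply assemble the three ingredients that have already been laid out: the decomposition of the weighted Laplacian, the identification of $\Delta \rho_{\bm}$ along $\gamma_{x}$ with the classical mean curvature of level sets, and continuity at $t=0$.

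First, I would apply the definition of the weighted Laplacian to $\varphi = \rho_{\bm}$. Since $\rho_{\bm}$ is smooth on $\inte M \setminus \cut \bm$, and since for $t \in (0,\tau(x))$ the point $\gamma_{x}(t)$ lies in this smooth domain (by the description of $D_{\bm}$ and $\cut \bm$ in Subsection 2.3), we may evaluate pointwise to obtain
\begin{equation*}
\Delta_{f}\,\rho_{\bm}(\gamma_{x}(t)) \;=\; \Delta \rho_{\bm}(\gamma_{x}(t)) \;+\; g(\nabla f,\nabla \rho_{\bm})(\gamma_{x}(t)).
\end{equation*}
Next, I would invoke the remark immediately preceding the lemma, which records that $\Delta \rho_{\bm}(\gamma_{x}(t))$ equals the mean curvature $H_{x,t}$ of the $t$-level set of $\rho_{\bm}$ at $\gamma_{x}(t)$ taken with respect to $\nabla \rho_{\bm}$. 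Substituting gives
\begin{equation*}
\Delta_{f}\,\rho_{\bm}(\gamma_{x}(t)) \;=\; H_{x,t} \;+\; g(\nabla f,\nabla \rho_{\bm})(\gamma_{x}(t)) \;=\; H_{f,x,t},
\end{equation*}
which is the first assertion.

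For the limiting behavior as $t \to 0$, I would use smoothness of $\rho_{\bm}$ in a neighborhood of $\bm$ (again from the structure of $D_{\bm}$) together with the fact that the $t$-level sets of $\rho_{\bm}$ converge smoothly to $\bm$ itself, so $H_{x,t} \to H_{x}$. Simultaneously, $\nabla \rho_{\bm}(\gamma_{x}(t)) \to \gamma_{x}'(0) = u_{x}$ by the characterization of the gradient of $\rho_{\bm}$ via the foot-point construction, and $\nabla f$ is continuous on $M$; hence $g(\nabla f,\nabla \rho_{\bm})(\gamma_{x}(t)) \to g((\nabla f)_{x}, u_{x})$. Combining these two limits with the formula for $H_{f,x,t}$ yields
\begin{equation*}
\lim_{t \to 0^{+}} \Delta_{f}\,\rho_{\bm}(\gamma_{x}(t)) \;=\; H_{x} + g((\nabla f)_{x},u_{x}) \;=\; H_{f,x},
\end{equation*}
which matches the definition of the $f$-mean curvature in $(\ref{eq:weighted mean curvature})$.

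There is no genuine obstacle here: the lemma is a bookkeeping statement that records how the weighted Laplacian of the distance function from the boundary decomposes into the classical mean curvature of level sets plus a drift term from $f$. The only point that deserves a line of care is the passage to the boundary in $t \to 0$, where one must remark that the level sets of $\rho_{\bm}$ foliate a one-sided tubular neighborhood of $\bm$ smoothly and converge to $\bm$ with $\nabla \rho_{\bm}$ approaching the unit inner normal field; this is already built into the $\bm$-Jacobi field setup of Subsection 2.2 and the foot-point description of $\nabla \rho_{\bm}$ in Subsection 2.3.
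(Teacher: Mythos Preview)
Your proposal is correct and matches the paper's own treatment: the paper does not give a separate proof of this lemma but simply states that it follows from the definition of the weighted Laplacian together with the observation, recorded just before the lemma, that $\Delta \rho_{\bm}(\gamma_{x}(t))=H_{x,t}$. Your write-up is a faithful unpacking of exactly that, with an appropriate extra line justifying the $t\to 0$ limit.
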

For $x \in \bm$ and $t \in [0,\tau(x))$,
we put
\begin{equation}\label{eq:weighted volume element}
\theta_{f}(t,x):=e^{-f(\gamma_{x}(t))}\, \theta(t,x),
\end{equation}
where $\theta(t,x)$ is the absolute value of the Jacobian of the map $\expp$ at $(x,tu_{x})\in \tbp$.
For all $x\in \bm$ and $t\in (0,\tau(x))$,
it holds that
\begin{equation}\label{eq:Laplacian representation}
\Delta_{f}\, \rho_{\bm}(\gamma_{x}(t)) =-(\log \theta(t,x))'+f(\gamma_{x}(t))'=-\frac{\theta_{f}'(t,x)}{\theta_{f}(t,x)}.
\end{equation}
Let $\bar{\theta}_{f}:[0,\infty) \times \bm \to \mathbb{R}$ be a function defined by
\begin{equation}\label{eq:extend volume element}
  \bar{\theta}_{f}(t,x) := \begin{cases}
                            \theta_{f}(t,x) & \text{if $t< \tau(x)$}, \\
                            0           & \text{if $t \geq \tau(x)$}.
                       \end{cases}
\end{equation}

The following has been shown in \cite{Sa2}:
\begin{lem}\label{lem:Basic volume lemma}
If $\bm$ is compact,
then for all $r\in (0,\infty)$
\begin{equation*}\label{eq:volume comparison1}
m_{f}( B_{r}(\bm))=\int_{\bm} \int^{r}_{0}\bar{\theta}_{f}(t,x)\,dt\,d\vol_{h},
\end{equation*}
where $m_{f}$ denotes the weighted measure on $M$ defined as $(\ref{eq:weighted measure})$,
and $h$ denotes the induced Riemannian metric on $\bm$.
\end{lem}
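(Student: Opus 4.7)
The plan is to use the normal exponential map $\expp$ from $\bm$ as a ``polar-coordinate'' chart on the open dense set $D_{\bm}\setminus \bm$ and apply the classical change-of-variables formula. As recalled in Subsection \ref{sec:Cut locus for the boundary}, the set $TD_{\bm}\setminus 0(\tbp)$ is the maximal domain in $\tbp$ on which $\expp$ is both regular and injective, and $\cut \bm$ is a null set of $M$. Since $\bm$ is itself a null hypersurface in $M$, one may replace $B_{r}(\bm)$ by $B_{r}(\bm)\setminus(\cut \bm\cup \bm)$ without affecting the value of $m_{f}(B_{r}(\bm))$.

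Next I would parametrize $B_{r}(\bm)\setminus(\cut \bm\cup \bm)$ via the diffeomorphism
\[
\Psi : \{(t,x) \in (0,\infty)\times \bm : 0<t<\tau(x)\} \to D_{\bm}\setminus \bm,\qquad \Psi(t,x):=\gamma_{x}(t),
\]
and observe that $\Psi(t,x)\in B_{r}(\bm)$ precisely when $t\leq r$, since $\rho_{\bm}(\gamma_{x}(t))=t$ whenever $t<\tau(x)$ by the definition of $\tau$ in (\ref{eq:cut point}). By the very definition of $\theta(t,x)$ in Subsection \ref{sec:Jacobi fields orthogonal to the boundary}, the absolute Jacobian of $\Psi$ computed using an orthonormal basis of $T_{x}\bm$ together with the unit vector $u_{x}$ equals $\theta(t,x)$. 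The change-of-variables formula together with Fubini's theorem (both justified by compactness of $\bm$) then yields
\begin{align*}
m_{f}(B_{r}(\bm))
&= \int_{B_{r}(\bm)\setminus(\cut \bm\cup \bm)} e^{-f}\, d\vol_{g} \\
&= \int_{\bm} \int_{0}^{\min\{r,\tau(x)\}} e^{-f(\gamma_{x}(t))}\,\theta(t,x)\, dt\, d\vol_{h}(x) \\
&= \int_{\bm} \int_{0}^{\min\{r,\tau(x)\}} \theta_{f}(t,x)\, dt\, d\vol_{h}(x),
\end{align*}
where in the last line I use the definition (\ref{eq:weighted volume element}) of $\theta_{f}$.

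By the definition (\ref{eq:extend volume element}) of $\bar{\theta}_{f}$, which vanishes for $t\geq \tau(x)$, the inner integral equals $\int_{0}^{r} \bar{\theta}_{f}(t,x)\, dt$, and the desired identity follows. The only mildly delicate point is confirming that the $r$-truncation is compatible with the polar decomposition; this is exactly what the identity $\rho_{\bm}(\gamma_{x}(t))=t$ on $[0,\tau(x))$ ensures. No curvature or weighted-curvature hypothesis enters: the result is a purely measure-theoretic consequence of the injectivity of $\expp$ on $TD_{\bm}\setminus 0(\tbp)$ together with the fact that $\cut \bm$ is null in $M$.
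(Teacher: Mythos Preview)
Your argument is correct and is the standard derivation of this formula. Note, however, that the paper does not actually give a proof of this lemma: it is quoted from the author's earlier work \cite{Sa2} (``The following has been shown in \cite{Sa2}''), so there is no in-paper proof to compare against. The polar-coordinate argument you wrote---removing the null sets $\bm$ and $\cut\bm$, using that $\expp$ is a diffeomorphism from $TD_{\bm}\setminus 0(\tbp)$ onto $D_{\bm}\setminus\bm$ with Jacobian $\theta(t,x)$, and then invoking Tonelli/Fubini---is exactly the expected proof and is essentially what the cited reference does. One minor remark: since the integrand $e^{-f(\gamma_{x}(t))}\theta(t,x)$ is non-negative, Tonelli's theorem already justifies the interchange of integrals without any finiteness hypothesis; compactness of $\bm$ is convenient but not the crux of the Fubini step.
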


\subsection{Twisted and warped product spaces}
In \cite{W},
for the proof of a splitting theorem of Cheeger-Gromoll type,
Wylie has proved that
a twisted product space over $\mathbb{R}$ becomes a warped product space
under a non-negativity of the $1$-weighted Ricci curvature (see Proposition 2.2 in \cite{W}).
The proof is based on a pointwise calculation,
and the same holds true for a twisted product space over an arbitrary interval.

From the argument in the proof of Proposition 2.2 in \cite{W},
we can derive the following in our setting:
\begin{prop}[\cite{W}]\label{prop:twisted to warped}
Let $M$ be a Riemannian manifold with boundary,
and let $f:M\to \mathbb{R}$ be a smooth function.
Suppose that
there exist an interval $I$ in the form of $[0,\infty)$
or $[0,D]$ for some positive number $D$,
and a connected component $\bm_{1}$ of $\bm$ such that
$M$ is isometric to a twisted product space $I \times_{F} \bm_{1}$.
If $\ric^{1}_{f,M} \geq 0$,
then there exist functions $f_{0}:I \to \mathbb{R}$ and $f_{1}:\bm_{1}\to \mathbb{R}$ such that
for all $t \in I$ and $x \in \bm_{1}$
we have $f(\gamma_{x}(t))=f_{0}(t)+f_{1}(x)$;
in particular,
$M$ is isometric to a warped product space $(I\times \bm_{1},dt^{2}+e^{\frac{2 f_{0}(t)}{n-1}}h_{0})$,
where for the induced metric $h$ on $\bm_{1}$
we put $h_{0}:=e^{2\frac{f_{1}-(f|_{\bm_{1}})}{n-1}}h$.
\end{prop}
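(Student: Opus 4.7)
The approach exploits a critical algebraic feature of the twisted product structure: with the normalization $F_{x}(t) = e^{(f(\gamma_{x}(t))-f(x))/(n-1)}$, the factor $n-1$ in the exponent is chosen precisely so that the radial component $\ric^{1}_{f}(\partial_{t},\partial_{t})$ vanishes identically. The hypothesis $\ric^{1}_{f,M}\geq 0$ then carries no information in the purely radial direction, but combined with the positive semi-definiteness of the tensor $\ric^{1}_{f}$ it forces the off-diagonal components $\ric^{1}_{f}(\partial_{t},V)$ with $V$ tangent to $\bm_{1}$ to vanish as well. Unpacking this off-diagonal vanishing yields the independence of $F_{x}(t)$ on the fiber variable $x$, which is exactly the collapse from a twisted to a warped product.

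Concretely, in twisted coordinates $(t,x)\in I\times \bm_{1}$ with $g = dt^{2}+F_{x}(t)^{2}h$, I would set $\phi(t,x):=\log F_{x}(t) = (f(\gamma_{x}(t))-f(x))/(n-1)$, so that $\phi(0,\cdot)\equiv 0$. Since the shape operator of $\{t\}\times \bm_{1}$ with respect to $\partial_{t}$ equals $\phi_{t}\,\id$, the Riccati equation gives $\ric_{g}(\partial_{t},\partial_{t}) = -(n-1)(\phi_{tt}+\phi_{t}^{2})$. Together with $\Hess f(\partial_{t},\partial_{t}) = (n-1)\phi_{tt}$ (because $\partial_{t}$ is a geodesic vector field) and $g(\nabla f,\partial_{t})^{2}/(n-1) = (n-1)\phi_{t}^{2}$, these sum to $\ric^{1}_{f}(\partial_{t},\partial_{t})\equiv 0$. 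Applying $\ric^{1}_{f}(\partial_{t}+sV,\partial_{t}+sV)\geq 0$ for all $s\in\mathbb{R}$ then forces $\ric^{1}_{f}(\partial_{t},V)=0$ for every fiber-tangent $V$. Expanding this identity in a fiber coordinate $x^{i}$ with the twisted-product Christoffel symbols produces $\ric_{g}(\partial_{t},\partial_{i}) = -(n-2)\phi_{ti}$ together with $(\Hess f)(\partial_{t},\partial_{i})+(\partial_{t}f)(\partial_{i}f)/(n-1) = (n-1)\phi_{ti}$, and the whole sum reduces to $\phi_{ti}=0$. Hence $\phi_{t}$ depends only on $t$, and integrating from zero using $\phi(0,\cdot)\equiv 0$ shows $\phi(t,x)=\psi(t)$ for some function $\psi$, i.e.\ $F_{x}(t)$ is independent of $x$.

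Setting $f_{0}(t):=(n-1)\psi(t)$ and $f_{1}(x):=f(x)$ yields the decomposition $f(\gamma_{x}(t)) = f_{0}(t)+f_{1}(x)$. Since $f_{1}|_{\bm_{1}} = f|_{\bm_{1}}$, the metric $h_{0}:=e^{2(f_{1}-f|_{\bm_{1}})/(n-1)}h$ reduces to $h$, and $dt^{2}+F_{x}(t)^{2}h = dt^{2}+e^{2f_{0}(t)/(n-1)}h_{0}$, as required. The main technical obstacle is the off-diagonal Christoffel-symbol computation that isolates the single term $\phi_{ti}$: one must verify the precise coefficient cancellation between the Ricci contribution $-(n-2)\phi_{ti}$ and the combined Hessian-plus-weighted-gradient contribution $+(n-1)\phi_{ti}$, a cancellation that depends crucially on the $N=1$ normalization and which is consistent with the remarks elsewhere in the paper that the corresponding rigidity statements are not known for $N\in(1,n)$.
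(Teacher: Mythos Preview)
Your proposal is correct and reproduces exactly the argument the paper invokes: the paper gives no self-contained proof but refers to Proposition~2.2 of Wylie~\cite{W}, whose pointwise computation is precisely the one you outline---namely, checking that $\ric^{1}_{f}(\partial_{t},\partial_{t})\equiv 0$ on the twisted product, using the non-negativity to kill the mixed terms $\ric^{1}_{f}(\partial_{t},\partial_{i})=\phi_{ti}$, and integrating $\phi_{ti}=0$ from the initial condition $\phi(0,\cdot)\equiv 0$ to obtain the separation $f(\gamma_{x}(t))=f_{0}(t)+f_{1}(x)$. The Christoffel computation you flag as the main obstacle indeed yields $\ric_{g}(\partial_{t},\partial_{i})=-(n-2)\phi_{ti}$ and $\Hess f(\partial_{t},\partial_{i})+\tfrac{1}{n-1}(\partial_{t}f)(\partial_{i}f)=(n-1)\phi_{ti}$, so the coefficient cancellation goes through as stated.
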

Notice that
Proposition \ref{prop:twisted to warped} has been implicitly used in the proof of Theorem 5.1 in \cite{W}.

\section{Laplacian comparisons}\label{sec:Comparison theorems}
In this section,
let $M$ be an $n$-dimensional,
connected complete Riemannian manifold with boundary with Riemannian metric $g$,
and let $f:M\to \mathbb{R}$ be a smooth function.
\subsection{Basic comparisons}
Recall that
for $x\in \bm$,
the function $F_{x}:[0,\tau(x)]\setminus \{\infty\} \to (0,\infty)$ is defined as (\ref{eq:warping function}).
For $\kappa,\lambda \in \mathbb{R}$,
we define a function $F_{\kappa,\lambda,x}:[0,\tau(x)]\setminus \{\infty\} \to \mathbb{R}$ by
\begin{equation}\label{eq:subharmonic warping function}
F_{\kappa,\lambda,x}(t):=\kappa \int^{t}_{0}\,F^{2}_{x}(s)\,ds+\lambda.
\end{equation}
Note that
if $\kappa$ and $\lambda$ satisfy the subharmonic-condition,
then for every $x\in \bm$
the function $F_{\kappa,\lambda,x}$ is non-negative.

We have the following Laplacian comparison inequality for $\rho_{\bm}$:
\begin{lem}\label{lem:Laplacian comparison}
Take $x\in \bm$.
For $\kappa,\lambda \in \mathbb{R}$
and for $N\in (-\infty,1]$
we suppose that for all $t\in (0,\tau(x))$
we have $\ric^{N}_{f}(\gamma'_{x}(t)) \geq \kappa$,
and suppose $H_{f,x}\geq \lambda$.
Then for all $t\in (0,\tau(x))$
we have
\begin{equation}\label{eq:Laplacian comparison}
\Delta_{f}\, \rho_{\bm} (\gamma_{x}(t)) \geq F^{-2}_{x}(t)\,F_{\kappa,\lambda,x}(t).
\end{equation}
In particular,
if $\kappa$ and $\lambda$ satisfy the subharmonic-condition,
then for all $t\in (0,\tau(x))$
we have $\Delta_{f}\, \rho_{\bm} (\gamma_{x}(t)) \geq 0$.
\end{lem}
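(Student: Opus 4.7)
The plan is to combine the Bochner formula with Wylie's rescaling trick: set $M(t) := F_{x}^{2}(t)\, \Delta_{f}\rho_{\bm}(\gamma_{x}(t))$ and aim for the single differential inequality $M'(t) \geq \kappa\, F_{x}^{2}(t)$ on $(0,\tau(x))$, together with the boundary value $\lim_{t \to 0^{+}} M(t) \geq \lambda$. Integrating over $(0,t)$ and dividing by $F_{x}^{2}(t)$ then immediately produces $(\ref{eq:Laplacian comparison})$.

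To obtain the differential inequality, I would apply Proposition \ref{prop:Bochner formula} to $\varphi = \rho_{\bm}$ on $\inte M \setminus \cut \bm$. Because $\Vert \nabla \rho_{\bm} \Vert \equiv 1$ there, the left-hand side of the Bochner identity vanishes and, after restricting to $\gamma_{x}$, one obtains
\begin{equation*}
m'(t) = \ric^{\infty}_{f}(\gamma_{x}'(t)) + \Vert \Hess \rho_{\bm} \Vert^{2}(\gamma_{x}(t)),
\end{equation*}
where $m(t) := \Delta_{f}\rho_{\bm}(\gamma_{x}(t))$. Differentiating $\Vert \nabla \rho_{\bm} \Vert^{2} \equiv 1$ forces $\Hess \rho_{\bm}$ to annihilate the span of $\gamma_{x}'$, so Cauchy-Schwarz applied to the remaining $(n-1)$ eigenvalues gives $\Vert \Hess \rho_{\bm} \Vert^{2} \geq \psi(t)^{2}/(n-1)$, where $\psi(t) := \Delta \rho_{\bm}(\gamma_{x}(t)) = m(t) - (f \circ \gamma_{x})'(t)$. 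Combining these estimates with the pointwise identity $\ric^{\infty}_{f}(v) = \ric^{N}_{f}(v) + g(\nabla f, v)^{2}/(N-n)$ for tangent vectors $v$ (valid because $N \leq 1 < n$) and with the hypothesis $\ric^{N}_{f}(\gamma_{x}'(t)) \geq \kappa$, I would arrive at
\begin{equation*}
m'(t) \geq \kappa + \frac{(f \circ \gamma_{x})'(t)^{2}}{N-n} + \frac{(m(t)-(f\circ\gamma_{x})'(t))^{2}}{n-1}.
\end{equation*}

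Noting that $(\log F_{x})'(t) = (f\circ\gamma_{x})'(t)/(n-1)$, a direct calculation gives $M'(t) = F_{x}^{2}(t)\left[m'(t) + \frac{2\,(f\circ\gamma_{x})'(t)\, m(t)}{n-1}\right]$. Expanding the square in the previous display and collecting the terms in $(f\circ\gamma_{x})'(t)^{2}$ reduces the desired bound $M'(t) \geq \kappa\, F_{x}^{2}(t)$ to the purely algebraic inequality
\begin{equation*}
\frac{m(t)^{2}}{n-1} + \frac{N-1}{(N-n)(n-1)}\,(f\circ\gamma_{x})'(t)^{2} \geq 0.
\end{equation*}
This is the crux, and precisely the place where the hypothesis $N \leq 1$ enters: because $N-1 \leq 0$ and $N-n < 0$ (as $n \geq 2$), the coefficient of $(f\circ\gamma_{x})'(t)^{2}$ is nonnegative, so the inequality holds. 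For the boundary value, Lemma \ref{lem:Laplacian and mean curvature} gives $\lim_{t\to 0^{+}} m(t) = H_{f,x} \geq \lambda$, and $F_{x}(0)=1$, whence $\lim_{t \to 0^{+}} M(t) \geq \lambda$. Integrating $M'(s) \geq \kappa F_{x}^{2}(s)$ over $(0,t)$ then yields $M(t) \geq \lambda + \kappa \int_{0}^{t} F_{x}^{2}(s)\, ds = F_{\kappa,\lambda,x}(t)$, which is $(\ref{eq:Laplacian comparison})$. The final subharmonicity assertion follows at once from the non-negativity of $F_{\kappa,\lambda,x}$ and $F_{x}^{-2}$ under the subharmonic-condition. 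I expect the main obstacle to be the sign check above: the rest is a routine combination of Bochner and algebraic manipulation, but the non-negativity of $(N-1)/[(N-n)(n-1)]$ is the critical feature that both restricts the argument to $N \leq 1$ and, as Wylie first observed, makes the $F_{x}$-rescaling succeed at all.
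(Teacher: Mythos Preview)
Your proof is correct and is essentially the same as the paper's: both apply the Bochner formula to $\rho_{\bm}$, use the Cauchy--Schwarz bound $\Vert\Hess\rho_{\bm}\Vert^{2}\geq(\Delta\rho_{\bm})^{2}/(n-1)$, recognize that the combination $h'_{f,x}+2(f\circ\gamma_{x})'h_{f,x}/(n-1)$ equals $F_{x}^{-2}(F_{x}^{2}h_{f,x})'$, and observe that the residual coefficient $(N-1)/[(N-n)(n-1)]=(1-N)/[(n-1)(n-N)]$ is nonnegative precisely when $N\leq 1$. The only difference is packaging---you phrase the argument as a direct bound on $M'(t)$, whereas the paper rearranges the Bochner inequality first and then identifies the derivative term---but the computations and the crucial sign check are identical.
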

\begin{proof}
The function $\rho_{\bm} \circ \gamma_{x}$ is smooth on $(0,\tau(x))$.
We put $h_{f,x}:=\left(\Delta_{f}\,\rho_{\bm}\right)\circ \gamma_{x}$.
We first show that
for all $s \in (0,\tau(x))$
\begin{equation}\label{eq:monotonicity}
\left(F^{2}_{x}(s)\,h_{f,x}(s)-\kappa\,\int^{s}_{0}\,F^{2}_{x}(u)\,du\right)' \geq 0.
\end{equation}
Fix $s\in (0,\tau(x))$,
and put $f_{x}:=f \circ \gamma_{x}$.
We apply Proposition \ref{prop:Bochner formula} to the function $\rho_{\bm}$.
Since $\Vert \nabla \rho_{\bm} \Vert=1$ along $\gamma_{x}|_{(0,\tau(x))}$,
it holds that
\begin{align*}\label{eq:Bochner formula1}
0 &= \ric^{\infty}_{f}(\gamma'_{x}(s))+\Vert \Hess \rho_{\bm}\Vert^{2}\left(\gamma_{x}(s)\right)-g\left(\nabla \Delta_{f}\rho_{\bm},\nabla \rho_{\bm}  \right)(\gamma_{x}(s))\\
   &= \left(\ric^{N}_{f}(\gamma'_{x}(s))+\frac{f'_{x}(s)^{2}}{N-n}\right)+\Vert \Hess \rho_{\bm}\Vert^{2}\left(\gamma_{x}(s)\right)-h'_{f,x}(s). \notag
\end{align*}
From the assumption $\ric^{N}_{f}(\gamma'_{x}(s)) \geq \kappa$,
we deduce
\begin{equation}\label{eq:curvature assumption}
0 \geq \kappa+\frac{f'_{x}(s)^{2}}{N-n}+\Vert \Hess \rho_{\bm}\Vert^{2}\left(\gamma_{x}(s)\right)-h'_{f,x}(s).
\end{equation}
By the Cauchy-Schwarz inequality,
we have
\begin{equation}\label{eq:Cauchy-Schwarz inequality}
\Vert \Hess \rho_{\bm}\Vert^{2}\left(\gamma_{x}(s)\right) \geq \frac{\left(\Delta \rho_{\bm}(\gamma_{x}(s))\right)^{2}}{n-1}=\frac{\left( h_{f,x}(s)-f'_{x}(s)\right)^{2}}{n-1}.
\end{equation}
Combining (\ref{eq:curvature assumption}) and (\ref{eq:Cauchy-Schwarz inequality}),
we see
\begin{align}\label{eq:combination}\notag
0 &\geq \kappa+\frac{f'_{x}(s)^{2}}{N-n}+\frac{\left(h_{f,x}(s)-f'_{x}(s)\right)^{2}}{n-1}-h'_{f,x}(s)\\
   &   =   \kappa+\frac{\left(1-N\right)f'_{x}(s)^{2}}{\left(n-1\right)\left(n-N\right)}+\frac{h_{f,x}(s)^{2}}{n-1}-\left(\frac{2h_{f,x}(s)f'_{x}(s)}{n-1}+h'_{f,x}(s)\right).
\end{align}
In the right hand side of the equation (\ref{eq:combination}),
by $N\leq 1$,
the second term is non-negative.
The third one is non-negative.
The last one satisfies
\begin{align*}
\frac{2h_{f,x}(s)f'_{x}(s)}{n-1}+h'_{f,x}(s)
&=e^{-\frac{2f_{x}(s)}{n-1}}\,\left(e^{\frac{2f_{x}(s)}{n-1}}\,h_{f,x}(s)\right)'\\
&=F^{-2}_{x}(s)\,\left(F^{2}_{x}(s)\,h_{f,x}(s)\right)'.
\end{align*}
Hence,
we have $0\geq \kappa-F^{-2}_{x}(s)\,\left(F^{2}_{x}(s)\,h_{f,x}(s)\right)'$.
This implies (\ref{eq:monotonicity}).

We see that
$F_{x}(s)$ tends to $1$ as $s\to 0$.
Furthermore,
by Lemma \ref{lem:Laplacian and mean curvature},
$h_{f,x}(s)$ tends to $H_{f,x}$ as $s\to 0$.
It follows that
\begin{equation}\label{eq:initial value}
F^{2}_{x}(s)\,h_{f,x}(s)-\kappa\,\int^{s}_{0}\,F^{2}_{x}(u)\,du \to H_{f,x}
\end{equation}
as $s\to 0$.
By (\ref{eq:monotonicity}) and (\ref{eq:initial value}),
for all $s,t\in (0,\tau(x))$ with $s\leq t$
\begin{multline}\label{eq:Bochner formula4}
        F^{2}_{x}(t)\,h_{f,x}(t)-\kappa\,\int^{t}_{0}\,F^{2}_{x}(u)\,du\\
\geq F^{2}_{x}(s)\,h_{f,x}(s)-\kappa\,\int^{s}_{0}\,F^{2}_{x}(u)\,du
\geq H_{f,x}
\geq \lambda.
\end{multline}
Thus,
we arrive at (\ref{eq:Laplacian comparison}).
\end{proof}
\begin{rem}
Under the same setting as in Lemma \ref{lem:Laplacian comparison},
Wylie \cite{W} has shown a Laplacian comparison inequality for the distance function from a connected component of the boundary 
that is similar to (\ref{eq:Laplacian comparison})
when $\kappa=0$ and $\lambda=0$ (see the proof of Theorem 5.1 in \cite{W}).
\end{rem}
\begin{rem}\label{rem:equality in the lemma}
Assume that
for some $t_{0}\in (0,\tau(x))$
the equality in $(\ref{eq:Laplacian comparison})$ holds.
Then (\ref{eq:Bochner formula4}) implies that
$F^{2}_{x}\,h_{f,x}=F_{\kappa,\lambda,x}$ and 
$\left(F^{2}_{x}\,h_{f,x}  \right)'=\kappa\, F^{2}_{x}$ on $(0,t_{0})$.
Hence,
for every $t \in (0,t_{0})$,
the equality in the Cauchy-Schwarz inequality in (\ref{eq:Cauchy-Schwarz inequality}) holds;
in particular,
there exists a function $\varphi$ on $\gamma_{x}((0,t_{0}))$ such that
at each point on $\gamma_{x}((0,t_{0}))$
we have $\Hess \rho_{\bm}=\varphi\,g$ on the orthogonal complement of $\nabla \rho_{\bm}$.
Furthermore,
for every $t \in (0,t_{0})$,
the second term and the third one in the right hand side of (\ref{eq:combination}) are equal to $0$;
in particular,
$(1-N)(f'_{x})^{2}=0$ on $(0,t_{0})$.
\end{rem}
From Lemma \ref{lem:Laplacian comparison},
we derive the following:
\begin{lem}\label{lem:Basic comparison}
Take $x\in \bm$.
For $\kappa,\lambda \in \mathbb{R}$ and for $N\in (-\infty,1]$
we suppose that for all $t\in (0,\tau(x))$
we have $\ric^{N}_{f}(\gamma'_{x}(t)) \geq \kappa$,
and suppose $H_{f,x}\geq \lambda$.
Then for all $s,t\in [0,\tau(x))$ with $s\leq t$
we have
\begin{equation*}
\theta_{f}(t,x) \leq e^{-\int^{t}_{s}\,F^{-2}_{x}(u)\,F_{\kappa,\lambda,x}(u)\,du}\,\theta_{f}(s,x),
\end{equation*}
where $\theta_{f}(t,x)$ is defined as $(\ref{eq:weighted volume element})$.
In particular,
if $\kappa$ and $\lambda$ satisfy the subharmonic-condition,
then for all $s,t\in [0,\tau(x))$ with $s\leq t$
we have $\theta_{f}(t,x) \leq \theta_{f}(s,x)$.
\end{lem}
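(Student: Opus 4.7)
The plan is to derive this as a direct integration of the Laplacian comparison in Lemma \ref{lem:Laplacian comparison}, combined with the logarithmic derivative representation (\ref{eq:Laplacian representation}) of $\Delta_{f}\,\rho_{\bm}$ along $\gamma_{x}$.

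First I would fix $x \in \bm$ and restrict attention to the open interval $(0,\tau(x))$, on which both $\theta_{f}(\cdot,x)$ and $\rho_{\bm}\circ \gamma_{x}$ are smooth and positive. By (\ref{eq:Laplacian representation}),
\[
-\bigl(\log \theta_{f}(t,x)\bigr)' = \Delta_{f}\,\rho_{\bm}(\gamma_{x}(t)),
\]
and Lemma \ref{lem:Laplacian comparison} bounds the right hand side below by $F^{-2}_{x}(t)\,F_{\kappa,\lambda,x}(t)$. Hence for $0 < s \leq t < \tau(x)$, integrating this differential inequality over $[s,t]$ yields
\[
\log \theta_{f}(s,x) - \log \theta_{f}(t,x) \;\geq\; \int^{t}_{s} F^{-2}_{x}(u)\,F_{\kappa,\lambda,x}(u)\,du,
\]
and exponentiating gives the claimed estimate on $(0,\tau(x))$.

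Next I would extend the inequality to include $s = 0$. Since $\theta(0,x) = 1$ (the Jacobian of $\expp$ is the identity on $T_{x}\partial M \oplus \mathbb{R}\,u_{x}$ at $t=0$), we have $\theta_{f}(0,x) = e^{-f(x)}$, and continuity of $\theta_{f}(\cdot,x)$ at $0$ together with the fact that $F^{-2}_{x}\,F_{\kappa,\lambda,x}$ is continuous on $[0,\tau(x))$ (so its integral from $s$ to $t$ depends continuously on $s$) lets me pass to the limit $s \to 0^{+}$. Finally, for the \emph{in particular} part, if $\kappa$ and $\lambda$ satisfy the subharmonic-condition, then $F_{\kappa,\lambda,x} \geq 0$ on $[0,\tau(x))$ by the definition (\ref{eq:subharmonic warping function}), so the exponential factor is at most $1$ and we get $\theta_{f}(t,x) \leq \theta_{f}(s,x)$ directly.

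There is no serious obstacle here: the content of the lemma is essentially a rewriting of Lemma \ref{lem:Laplacian comparison} as a monotonicity statement for the weighted Jacobian, and the only minor care is the boundary point $s=0$, handled by continuity. The real work was already done in establishing Lemma \ref{lem:Laplacian comparison}.
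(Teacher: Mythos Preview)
Your proof is correct and matches the paper's approach: both combine (\ref{eq:Laplacian representation}) with Lemma \ref{lem:Laplacian comparison} and integrate the resulting differential inequality from $s$ to $t$. The paper phrases this as monotonicity of $\log\bigl(e^{-\int_{0}^{t}F_{x}^{-2}F_{\kappa,\lambda,x}}/\theta_{f}(t,x)\bigr)$ rather than integrating directly, but this is the same computation, and your explicit handling of the endpoint $s=0$ by continuity is a harmless elaboration of what the paper leaves implicit.
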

\begin{proof}
By (\ref{eq:Laplacian representation}) and Lemma \ref{lem:Laplacian comparison},
for all $t\in (0,\tau(x))$
\begin{equation*}
\frac{d}{dt} \log \frac{e^{-\int^{t}_{0}\,F^{-2}_{x}(u)F_{\kappa,\lambda,x}(u)\,du}}{\theta_{f}(t,x)}\\
=-F^{-2}_{x}(t)F_{\kappa,\lambda,x}(t)+\Delta_{f}\,\rho_{\bm}(\gamma_{x}(t))\geq 0.
\end{equation*}
It follows that
for all $s,t\in [0,\tau(x))$ with $s\leq t$
\begin{equation*}
\frac{\theta_{f}(t,x)}{\theta_{f}(s,x)} \leq \frac{    e^{-\int^{t}_{0}\,F^{-2}_{x}(u)F_{\kappa,\lambda,x}(u)\,du}     }{    e^{-\int^{s}_{0}\,F^{-2}_{x}(u)F_{\kappa,\lambda,x}(u)\,du}     }.
\end{equation*}
Therefore,
we have the lemma.
\end{proof}
\subsection{Equality cases}
We recall the following radial curvature equation (see e.g., Theorem 2 in \cite{Pe}):
\begin{lem}\label{lem:radial curvature equation}
Let $\rho$ be a smooth function defined on a domain in $M$ such that $\Vert \nabla \rho \Vert=1$.
Let $X$ be a parallel vector field along an integral curve of $\nabla \rho$
that is orthogonal to $\nabla \rho$.
Then we have
\begin{equation*}\label{eq:radial curvature equation}
g(R(X,\nabla \rho)\nabla \rho,X)=g(\nabla_{\nabla \rho}  A_{\nabla \rho}X,X)-g(A_{\nabla \rho}A_{\nabla \rho}X,X),
\end{equation*}
where $R$ is the curvature tensor induced from $g$,
and $A_{\nabla \rho}$ is the shape operator of the level set of $\rho$ toward $\nabla \rho$.
In particular,
if there exists a function $\varphi$ defined on the domain of the integral curve such that $A_{\nabla \rho}X=-\varphi\,X$,
then we have
\begin{equation*}
g(R(X,\nabla \rho)\nabla \rho,X)=-(\varphi'+\varphi^{2})\Vert X \Vert^{2}.
\end{equation*}
\end{lem}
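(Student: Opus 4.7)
The plan is to derive the identity directly from the definition of the Riemann curvature tensor, using the shape operator convention implicit in the paper (namely $A_{\nabla\rho}Y = -\nabla_Y \nabla\rho$, which follows from Subsection 2.2 by combining $g(A_u v, w) = g(S(v,w), u)$ with the orthogonality of tangential and normal vectors). The key preliminary fact I would record is that $\|\nabla\rho\|\equiv 1$ forces integral curves of $\nabla\rho$ to be geodesics: for every vector $Z$, polarizing the identity $Z(g(\nabla\rho,\nabla\rho))=0$ and using symmetry of the Hessian gives $g(\nabla_{\nabla\rho}\nabla\rho, Z) = g(\nabla_Z \nabla\rho, \nabla\rho) = 0$, so $\nabla_{\nabla\rho}\nabla\rho = 0$.

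Next I would unfold the curvature identity
\[
R(X,\nabla\rho)\nabla\rho \;=\; \nabla_X \nabla_{\nabla\rho}\nabla\rho \;-\; \nabla_{\nabla\rho}\nabla_X \nabla\rho \;-\; \nabla_{[X,\nabla\rho]}\nabla\rho.
\]
The first term vanishes by the previous step. The second becomes $-\nabla_{\nabla\rho}(-A_{\nabla\rho}X) = \nabla_{\nabla\rho}A_{\nabla\rho}X$. For the third, since $X$ is parallel along the integral curve, $\nabla_{\nabla\rho}X = 0$ on that curve, so $[X,\nabla\rho] = \nabla_X \nabla\rho - \nabla_{\nabla\rho}X = -A_{\nabla\rho}X$, and therefore $-\nabla_{[X,\nabla\rho]}\nabla\rho = \nabla_{A_{\nabla\rho}X}\nabla\rho = -A_{\nabla\rho}A_{\nabla\rho}X$. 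Summing and taking the inner product with $X$ yields the displayed identity.

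For the scalar specialization, I would substitute $A_{\nabla\rho}X = -\varphi X$ directly. Parallelism of $X$ along the integral curve gives $\nabla_{\nabla\rho}(A_{\nabla\rho}X) = \nabla_{\nabla\rho}(-\varphi X) = -\varphi' X$, so $g(\nabla_{\nabla\rho}A_{\nabla\rho}X, X) = -\varphi'\|X\|^2$. On the other hand $g(A_{\nabla\rho}A_{\nabla\rho}X, X) = \varphi^2\|X\|^2$. Subtracting produces $-(\varphi'+\varphi^2)\|X\|^2$, as claimed. The only conceptual subtlety is keeping the sign convention for $A_{\nabla\rho}$ consistent throughout; once that is pinned down, everything is routine tensor algebra and there is no real obstacle.
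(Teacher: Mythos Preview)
Your argument is correct. Note, however, that the paper does not supply its own proof of this lemma: it is simply recalled as a standard radial curvature equation with a reference to Petersen's textbook (Theorem~2 in \cite{Pe}). Your direct derivation from the definition of the curvature tensor---using that $\nabla_{\nabla\rho}\nabla\rho=0$ because $\Vert\nabla\rho\Vert\equiv 1$, together with the sign convention $A_{\nabla\rho}Y=-\nabla_Y\nabla\rho$, which is consistent with the identity $g(A_{\nabla\rho_{\bm}}E_{x,i},E_{x,i})=-\Hess\rho_{\bm}(E_{x,i},E_{x,i})$ used in the proof of Lemma~\ref{lem:Equality in Laplacian comparison}---is the standard computation and fills in exactly what the paper leaves to the reference. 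The only point worth making explicit is that, since the curvature tensor is tensorial, one may extend $X$ off the integral curve (e.g., by parallel transport along the integral curves of $\nabla\rho$) so that $\nabla_{\nabla\rho}X=0$ holds in a neighborhood, which justifies the bracket computation; this is routine and does not affect the validity of your proof.
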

For the equality case of Lemma \ref{lem:Laplacian comparison},
we have:
\begin{lem}\label{lem:Equality in Laplacian comparison}
Take $x\in \bm$.
For $\kappa,\lambda \in \mathbb{R}$ and for $N\in (-\infty,1]$
we suppose that for all $t\in (0,\tau(x))$
we have $\ric^{N}_{f}(\gamma'_{x}(t)) \geq \kappa$,
and suppose $H_{f,x}\geq \lambda$.
Choose an orthonormal basis $\{e_{x,i}\}_{i=1}^{n-1}$ of $T_{x}\bm$,
and let $\{Y_{x,i}\}^{n-1}_{i=1}$ be the $\bm$-Jacobi fields along $\gamma_{x}$
with initial conditions $Y_{x,i}(0)=e_{x,i}$ and $Y_{x,i}'(0)=-A_{u_{x}}e_{x,i}$.
Assume that
for some $t_{0}\in (0,\tau(x))$
the equality in $(\ref{eq:Laplacian comparison})$ holds.
Then $\kappa=0$ and $\lambda=0$,
and for all $i$
we have $Y_{x,i}=F_{x}\,E_{x,i}$ on $[0,t_{0}]$,
where $\{E_{x,i}\}^{n-1}_{i=1}$ are the parallel vector fields along $\gamma_{x}$ with initial condition $E_{x,i}(0)=e_{x,i}$.
Moreover,
if $N \in (-\infty,1)$,
then $f \circ \gamma_{x}$ is constant on $[0,t_{0}]$;
in particular,
$Y_{x,i}=E_{x,i}$.
\end{lem}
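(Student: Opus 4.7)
The plan is to leverage the chain of inequalities embedded in the proof of Lemma \ref{lem:Laplacian comparison} together with Remark \ref{rem:equality in the lemma}, which already packages the immediate consequences of equality; the main new work is to convert these pointwise identities into a first-order ODE for the $\bm$-Jacobi fields whose explicit solution is $F_{x}$ times a parallel field.

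First, the equality in $(\ref{eq:Laplacian comparison})$ at $t_{0}$ forces the chain $(\ref{eq:Bochner formula4})$ to collapse, so for every $s \in (0,t_{0})$ one has $F^{2}_{x}(s)\,h_{f,x}(s)=F_{\kappa,\lambda,x}(s)$ and, differentiating, $(F^{2}_{x}\,h_{f,x})'(s)=\kappa\,F^{2}_{x}(s)$. Substituting the latter back into $(\ref{eq:combination})$ cancels the $\kappa$-term and yields
\[
\frac{(1-N)\,(f'_{x}(s))^{2}}{(n-1)(n-N)}+\frac{h_{f,x}(s)^{2}}{n-1} \leq 0.
\]
Since $N \leq 1$, both summands are non-negative, so $h_{f,x} \equiv 0$ on $(0,t_{0})$ and $(1-N)(f'_{x})^{2} \equiv 0$. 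When $N < 1$, this forces $f \circ \gamma_{x}$ to be constant on $[0,t_{0}]$ by continuity, hence $F_{x} \equiv 1$. Reinserting $h_{f,x} \equiv 0$ into $F^{2}_{x}\,h_{f,x}=F_{\kappa,\lambda,x}$ gives $\kappa \int_{0}^{s} F^{2}_{x}(u)\,du + \lambda \equiv 0$; differentiating yields $\kappa\,F^{2}_{x} \equiv 0$, so $\kappa = 0$, and then $\lambda = 0$.

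Next I would extract the Jacobi fields. The Cauchy-Schwarz equality of Remark \ref{rem:equality in the lemma}, combined with $\Hess \rho_{\bm}(\nabla \rho_{\bm},\cdot) = 0$, forces $\Hess \rho_{\bm} = \varphi\,g$ on the $(n-1)$-dimensional subspace $\nabla \rho_{\bm}^{\perp}$ along $\gamma_{x}|_{(0,t_{0})}$; taking traces gives $(n-1)\varphi = -\Delta \rho_{\bm}\circ \gamma_{x} = f'_{x} - h_{f,x} = f'_{x}$, hence $\varphi = f'_{x}/(n-1) = (\log F_{x})'$. Because the $\bm$-Jacobi fields $Y_{x,i}$ corresponding to the prescribed initial data remain orthogonal to $\gamma'_{x}$ and satisfy $[\gamma'_{x},Y_{x,i}]=0$, the identity $\nabla_{\gamma'_{x}} Y_{x,i} = \nabla_{Y_{x,i}}(\nabla \rho_{\bm}) = \Hess \rho_{\bm}(Y_{x,i},\cdot)^{\sharp}$ reduces the Jacobi equation to the first-order linear system $\nabla_{\gamma'_{x}} Y_{x,i} = \varphi\,Y_{x,i}$ on $(0,t_{0})$. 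Expanding $Y_{x,i} = \sum_{j} c_{ij}(t)\,E_{x,j}(t)$ in the parallel frame decouples this into $c'_{ij} = \varphi\,c_{ij}$; integrating from $0$ with $c_{ij}(0) = \delta_{ij}$ and $\int_{0}^{t} \varphi = \log F_{x}(t)$ gives $c_{ij}(t) = \delta_{ij}\,F_{x}(t)$, so $Y_{x,i}(t) = F_{x}(t)\,E_{x,i}(t)$ on $[0,t_{0}]$ by continuity. For $N < 1$, $F_{x} \equiv 1$ yields $Y_{x,i} = E_{x,i}$.

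The main technical subtlety is the bookkeeping around $\varphi$: Cauchy-Schwarz equality yields umbilicality of $\Hess \rho_{\bm}$ only on the $(n-1)$-dimensional complement $\nabla \rho_{\bm}^{\perp}$ (the radial direction already sees a vanishing Hessian), so the factor in $\varphi = -\Delta \rho_{\bm}/(n-1)$ is $n-1$ rather than $n$, and the clean identification $\varphi = (\log F_{x})'$ depends crucially on the vanishing of $h_{f,x}$ proved in the first step; once these are in place, the rest of the argument is a straightforward integration of a decoupled first-order linear system.
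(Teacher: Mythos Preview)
Your proof is correct and takes a more direct route than the paper's. The key divergence is how you obtain $\kappa=\lambda=0$: you observe that Remark~\ref{rem:equality in the lemma} already forces the third term $h_{f,x}^{2}/(n-1)$ in~(\ref{eq:combination}) to vanish, so $h_{f,x}\equiv 0$ on $(0,t_{0})$, and then $F_{\kappa,\lambda,x}=F_{x}^{2}h_{f,x}\equiv 0$ immediately gives $\kappa=\lambda=0$. The paper instead carries $h_{f,x}=F_{x}^{-2}F_{\kappa,\lambda,x}$ as an unknown, invokes the radial curvature equation (Lemma~\ref{lem:radial curvature equation}) to compute $\ric^{1}_{f}(\gamma'_{x})$ explicitly, and then squeezes $\kappa\le\ric^{N}_{f}\le\ric^{1}_{f}=\kappa-(n-1)^{-1}F_{x}^{-4}F_{\kappa,\lambda,x}^{2}$ to force $F_{\kappa,\lambda,x}=0$. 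For the Jacobi fields the paper builds $\mathcal{F}_{x}E_{x,i}$ and checks via Lemma~\ref{lem:radial curvature equation} that it solves the second-order Jacobi equation with the correct initial data; you instead reduce directly to the first-order system $\nabla_{\gamma'_{x}}Y_{x,i}=\varphi\,Y_{x,i}$ from the umbilicality of $\Hess\rho_{\bm}$ and integrate. Your argument is shorter and avoids Lemma~\ref{lem:radial curvature equation} entirely; the paper's route, while longer, displays the exact value of $\ric^{1}_{f}$ along $\gamma_{x}$, which is of independent interest.
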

\begin{proof}
By the equality assumption,
there exists a function $\varphi$ on the set $\gamma_{x}((0,t_{0}))$ such that
at each point on $\gamma_{x}((0,t_{0}))$
we have $\Hess \rho_{\bm}=\varphi\,g$ on the orthogonal complement of $\nabla \rho_{\bm}$ (see Remark \ref{rem:equality in the lemma}).
Put $\varphi_{x}:=\varphi \circ \gamma_{x}$.
For each $i$,
it holds that
\begin{equation*}
g(A_{\nabla \rho_{\bm}}E_{x,i},E_{x,i})=-\Hess \rho_{\bm}(E_{x,i},E_{x,i})=-\varphi_{x}.
\end{equation*}
It follows that $A_{\nabla \rho_{\bm}}E_{x,i}=-\varphi_{x}\,E_{x,i}$.
From Lemma \ref{lem:radial curvature equation},
we derive
\begin{equation}\label{eq:curvature}
R(E_{x,i},\nabla \rho_{\bm})\nabla \rho_{\bm}=-(\varphi'_{x}+\varphi^{2}_{x})E_{x,i}.
\end{equation}

Put $f_{x}:=f \circ \gamma_{x}$ and $h_{f,x}:=\left(\Delta_{f}\,\rho_{\bm}\right)\circ \gamma_{x}$.
By the equality assumption,
we have $F^{2}_{x}\,h_{f,x}=F_{\kappa,\lambda,x}$ on $[0,t_{0}]$ (see Remark \ref{rem:equality in the lemma}).
Fix $t\in [0,t_{0}]$.
Since $h_{f,x}(t)=F^{-2}_{x}(t)\,F_{\kappa,\lambda,x}(t)$,
we have 
\begin{equation*}
\Delta \rho_{\bm}(\gamma_{x}(t))=-f'_{x}(t)+F^{-2}_{x}(t)\,F_{\kappa,\lambda,x}(t).
\end{equation*}
On the other hand,
from $\Hess \rho_{\bm}=\varphi\,g$,
we deduce $\Delta \rho_{\bm}(\gamma_{x}(t))=-(n-1)\varphi_{x}(t)$.
Hence,
$\varphi_{x}(t)$ is equal to
\begin{equation*}
(n-1)^{-1}\left(f_{x}(t)-f(x)-\int^{t}_{0}\,F^{-2}_{x}(s)\,F_{\kappa,\lambda,x}(s)\,ds\right)'.
\end{equation*}
Now,
define a function $\mathcal{F}_{x}:[0,t_{0}] \to (0,\infty)$ by
\begin{equation*}
\mathcal{F}_{x}(t):=e^{\int^{t}_{0}\, \varphi_{x}(s)\,ds}
=e^{-\frac{\int^{t}_{0}\, F^{-2}_{x}(s)F_{\kappa,\lambda,x}(s)  \,ds}{n-1}}\,F_{x}(t).
\end{equation*}
Note that
if $\kappa=0$ and $\lambda=0$,
then $\mathcal{F}_{x}=F_{x}$.
By (\ref{eq:curvature}),
a vector field $\tilde{Y}_{x,i}$ along $\gamma_{x}|_{[0,t_{0}]}$ defined by $\tilde{Y}_{x,i}:=\mathcal{F}_{x}\,E_{x,i}$ is
a $\bm$-Jacobi field along $\gamma_{x}|_{[0,t_{0}]}$
with initial conditions $Y_{x,i}(0)=e_{x,i}$ and $Y_{x,i}'(0)=-A_{u_{x}}e_{x,i}$.
Therefore,
$Y_{x,i}$ coincides with $\tilde{Y}_{x,i}$ on $[0,t_{0}]$.

From (\ref{eq:curvature}) and
$Y_{x,i}=\mathcal{F}_{x}\,E_{x,i}$,
we derive $R(E_{x,i},\nabla \rho_{\bm})\nabla \rho_{\bm}=-\mathcal{F}''_{x}\,\mathcal{F}^{-1}_{x}\,E_{x,i}$.
This implies that
for each $t \in (0,t_{0})$
\begin{align*}
\ric_{g}(\gamma'_{x}(t))&=-(n-1)\mathcal{F}''_{x}(t)\,\mathcal{F}^{-1}_{x}(t)\\
                                      &=-f''_{x}(t)-\frac{f'_{x}(t)^{2}}{n-1}+\kappa-\frac{F^{-4}_{x}(t)\,F^{2}_{\kappa,\lambda,x}(t)}{n-1};
\end{align*}
in particular,
$\ric^{1}_{f}(\gamma'_{x}(t))=\kappa-(n-1)^{-1}F^{-4}_{x}(t)F^{2}_{\kappa,\lambda,x}(t)$.
By the monotonicity of $\ric^{N}_{f}$ with respect to $N$,
we see
\begin{equation*}
\kappa \leq \ric^{N}_{f}(\gamma'_{x}(t)) \leq \ric^{1}_{f}(\gamma'_{x}(t))=\kappa-\frac{F^{-4}_{x}(t)\,F^{2}_{\kappa,\lambda,x}(t)}{n-1},
\end{equation*}
and hence $F_{\kappa,\lambda,x}(t)=0$.
We obtain $\kappa=0$ and $\lambda=0$,
and $Y_{x,i}=F_{x}\,E_{x,i}$.

Now,
we have $(1-N)(f'_{x})^{2}=0$ on $[0,t_{0}]$ (see Remark \ref{rem:equality in the lemma}).
If $N$ is smaller than $1$,
then $f'_{x}=0$ on $[0,t_{0}]$;
in particular,
$f_{x}$ is constant on $[0,t_{0}]$.
This completes the proof.
\end{proof}
By Lemma \ref{lem:Equality in Laplacian comparison},
we have the following:
\begin{lem}\label{lem:Equality in Basic comparison}
Take $x\in \bm$.
Let $\kappa \in \mathbb{R}$ and $\lambda \in \mathbb{R}$ satisfy the subharmonic-condition.
For $N\in (-\infty,1]$
we suppose that for all $t\in (0,\tau(x))$
we have $\ric^{N}_{f}(\gamma'_{x}(t)) \geq \kappa$,
and suppose $H_{f,x}\geq \lambda$.
We choose an orthonormal basis $\{e_{x,i}\}_{i=1}^{n-1}$ of $T_{x}\bm$,
and let $\{Y_{x,i}\}^{n-1}_{i=1}$ be the $\bm$-Jacobi fields along $\gamma_{x}$
with initial conditions $Y_{x,i}(0)=e_{x,i}$ and $Y_{x,i}'(0)=-A_{u_{x}}e_{x,i}$.
Assume that
for some $t_{0} \in (0,\tau(x))$
we have $\Delta_{f}\rho_{\bm}(\gamma_{x}(t_{0}))=0$.
Then $\kappa=0$ and $\lambda=0$,
and for all $i$
we have $Y_{x,i}=F_{x}\,E_{x,i}$ on $[0,t_{0}]$,
where $\{E_{x,i}\}^{n-1}_{i=1}$ are the parallel vector fields along $\gamma_{x}$ with initial condition $E_{x,i}(0)=e_{x,i}$.
Moreover,
if $N \in (-\infty,1)$,
then $f \circ \gamma_{x}$ is constant on $[0,t_{0}]$;
in particular,
$Y_{x,i}=E_{x,i}$.
\end{lem}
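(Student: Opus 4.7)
The plan is to reduce this statement directly to Lemma \ref{lem:Equality in Laplacian comparison} by showing that the hypothesis $\Delta_f \rho_{\partial M}(\gamma_x(t_0)) = 0$ forces equality in the Laplacian comparison inequality at the point $t_0$. The key observation is that the subharmonic-condition is precisely designed to make the lower bound in the Laplacian comparison non-negative, so vanishing of $\Delta_f \rho_{\partial M}$ automatically saturates the inequality.

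First, I would invoke Lemma \ref{lem:Laplacian comparison}, which gives
\begin{equation*}
\Delta_f \rho_{\partial M}(\gamma_x(t)) \geq F_x^{-2}(t)\, F_{\kappa,\lambda,x}(t) \quad \text{for all } t \in (0,\tau(x)).
\end{equation*}
Since $\kappa$ and $\lambda$ satisfy the subharmonic-condition, the function $F_{\kappa,\lambda,x}$ is non-negative on $(0,\tau(x))$, and since $F_x > 0$, the right-hand side above is non-negative as well.

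Next, applying this at $t=t_0$ together with the hypothesis $\Delta_f \rho_{\partial M}(\gamma_x(t_0)) = 0$, I would conclude
\begin{equation*}
0 = \Delta_f \rho_{\partial M}(\gamma_x(t_0)) \geq F_x^{-2}(t_0)\, F_{\kappa,\lambda,x}(t_0) \geq 0,
\end{equation*}
which forces both terms to equal zero. In particular, the equality in (\ref{eq:Laplacian comparison}) holds at $t_0$, so the hypothesis of Lemma \ref{lem:Equality in Laplacian comparison} is met.

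Finally, I would apply Lemma \ref{lem:Equality in Laplacian comparison} directly: its conclusions deliver $\kappa = 0$ and $\lambda = 0$, the identity $Y_{x,i} = F_x\, E_{x,i}$ on $[0,t_0]$ for each $i$, and, in the case $N \in (-\infty,1)$, the constancy of $f \circ \gamma_x$ on $[0,t_0]$, hence $F_x \equiv 1$ and $Y_{x,i} = E_{x,i}$. There is no real obstacle here; the content of the present lemma is the observation that under the subharmonic-condition, the weaker-looking hypothesis $\Delta_f \rho_{\partial M} = 0$ is in fact equivalent to equality in the sharp Laplacian comparison, so that no extra work beyond a one-line sandwich argument is required.
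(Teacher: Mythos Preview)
Your proof is correct and takes essentially the same approach as the paper: the paper's proof simply observes that $\Delta_{f}\rho_{\bm}(\gamma_{x}(t_{0}))=0$ forces equality in (\ref{eq:Laplacian comparison}) and then invokes Lemma \ref{lem:Equality in Laplacian comparison}. You have merely spelled out the sandwich argument that the paper leaves implicit.
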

\begin{proof}
The assumption $\Delta_{f}\rho_{\bm}(\gamma_{x}(t_{0}))=0$ implies that
the equality in $(\ref{eq:Laplacian comparison})$ holds.
Lemma \ref{lem:Equality in Laplacian comparison} leads to the lemma.
\end{proof}

\subsection{Distributions}\label{sec:Distributions}
From Lemma \ref{lem:Laplacian comparison},
we derive the following:
\begin{lem}\label{lem:p-Laplacian comparison}
Take $x\in \bm$.
Let $p\in (1,\infty)$,
and let $\kappa \in \mathbb{R}$ and $\lambda \in \mathbb{R}$ satisfy the subharmonic-condition.
For $N \in (-\infty,1]$
we suppose that for all $t\in (0,\tau(x))$
we have $\ric^{N}_{f}(\gamma'_{x}(t)) \geq \kappa$,
and suppose $H_{f,x}\geq \lambda$.
Let $\varphi:[0,\infty)\to \mathbb{R}$ be a monotone increasing smooth function.
Then for all $t\in (0,\tau(x))$
\begin{equation}\label{eq:p-Laplacian comparison}
\Delta_{f,p} (\varphi \circ \rho_{\bm}) (\gamma_{x}(t)) \geq -\left(     \left(  \varphi'  \right)^{p-1}     \right)'(t).
\end{equation}
\end{lem}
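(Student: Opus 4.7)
The plan is to reduce the claim to the Laplacian comparison for $\rho_{\bm}$ already established in Lemma~\ref{lem:Laplacian comparison}, via a direct pointwise computation on $\inte M \setminus \cut \bm$. On this set $\rho_{\bm}$ is smooth with $\Vert \nabla \rho_{\bm}\Vert = 1$, and the curve $\gamma_{x}|_{(0,\tau(x))}$ stays inside it, so the $(f,p)$-Laplacian of $\varphi \circ \rho_{\bm}$ can be evaluated classically along $\gamma_x$.

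First I would unfold the definition of $\Delta_{f,p}$ applied to $u := \varphi \circ \rho_{\bm}$. Because $\nabla u = (\varphi' \circ \rho_{\bm})\, \nabla \rho_{\bm}$ with $\varphi' \geq 0$ (by the monotonicity hypothesis) and $\Vert \nabla \rho_{\bm}\Vert = 1$, one has $\Vert \nabla u\Vert^{p-2}\, \nabla u = (\varphi' \circ \rho_{\bm})^{p-1}\, \nabla \rho_{\bm}$. Applying the product rule to the divergence of $e^{-f}\,(\varphi' \circ \rho_{\bm})^{p-1}\, \nabla \rho_{\bm}$, using $\Div \nabla \rho_{\bm} = -\Delta \rho_{\bm}$ (the paper's sign convention for $\Delta$) and the definition $\Delta_{f}\, \rho_{\bm} = \Delta \rho_{\bm} + g(\nabla f, \nabla \rho_{\bm})$, I expect to obtain the pointwise identity
\[
\Delta_{f,p}(\varphi \circ \rho_{\bm}) = (\varphi' \circ \rho_{\bm})^{p-1}\, \Delta_{f}\, \rho_{\bm} - (p-1)\,(\varphi' \circ \rho_{\bm})^{p-2}\,(\varphi'' \circ \rho_{\bm})
\]
valid on $\inte M \setminus \cut \bm$. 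This identity is the heart of the argument.

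Evaluating this identity at $\gamma_x(t)$, and using that $\rho_{\bm}(\gamma_x(t)) = t$ on $(0,\tau(x))$, the claim splits into two pieces. The first term on the right is non-negative: Lemma~\ref{lem:Laplacian comparison}, together with the subharmonic-condition on $(\kappa,\lambda)$ and the hypotheses $\ric^{N}_{f}(\gamma'_{x}(t))\geq \kappa$ and $H_{f,x}\geq \lambda$, yields $\Delta_{f}\, \rho_{\bm}(\gamma_{x}(t)) \geq 0$, and $\varphi'(t)^{p-1} \geq 0$ by monotonicity. The second term matches the target right-hand side of (\ref{eq:p-Laplacian comparison}) via the chain rule $((\varphi')^{p-1})'(t) = (p-1)\,(\varphi'(t))^{p-2}\,\varphi''(t)$, so the estimate follows immediately.

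The main obstacle is mild: for $p \in (1,2)$ the factor $(\varphi'(t))^{p-2}$ is singular at zeros of $\varphi'$, and the identity must there be interpreted in a limiting sense (or one tacitly restricts to where $\varphi' > 0$, which is the only regime used in the intended applications). Apart from this bookkeeping, no substantive difficulty is anticipated; Lemma~\ref{lem:p-Laplacian comparison} is essentially a direct corollary of Lemma~\ref{lem:Laplacian comparison} combined with the chain rule.
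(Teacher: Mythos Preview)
Your proposal is correct and follows essentially the same approach as the paper: both compute the pointwise identity $\Delta_{f,p}(\varphi\circ\rho_{\bm})(\gamma_x(t)) = -\bigl((\varphi')^{p-1}\bigr)'(t) + \varphi'(t)^{p-1}\,\Delta_{f}\rho_{\bm}(\gamma_x(t))$ and then invoke Lemma~\ref{lem:Laplacian comparison} together with $\varphi'\ge 0$ to drop the second term. Your additional remark on the $p\in(1,2)$ singularity at zeros of $\varphi'$ is a fair caveat the paper does not spell out.
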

\begin{proof}
For all $t\in (0,\tau(x))$
we see
\begin{equation*}
\Delta_{f,p}\, (\varphi \circ \rho_{\bm}) (\gamma_{x}(t))=- \left(     \left(  \varphi'  \right)^{p-1}     \right)'(t)  +\Delta_{f,2}\, \rho_{\bm}(\gamma_{x}(t))\, \varphi'(t)^{p-1}.
\end{equation*}
This together with Lemma \ref{lem:Laplacian comparison} implies (\ref{eq:p-Laplacian comparison}).
\end{proof}
\begin{rem}\label{rem:Equality in p-Laplacian comparison}
The equality case in Lemma \ref{lem:p-Laplacian comparison}
corresponds to that in Lemma \ref{lem:Laplacian comparison}
(see Lemma \ref{lem:Equality in Basic comparison}).
\end{rem}
From Lemma \ref{lem:p-Laplacian comparison},
we deduce the following:
\begin{prop}\label{prop:global p-Laplacian comparison}
Let $p\in (1,\infty)$,
and let $\kappa \in \mathbb{R}$ and $\lambda \in \mathbb{R}$ satisfy the subharmonic-condition.
For $N\in (-\infty,1]$
we suppose $\ric^{N}_{f,M} \geq \kappa$,
and $H_{f,\bm}\geq \lambda$.
For a monotone increasing smooth function $\varphi:[0,\infty)\to \mathbb{R}$,
we put $\Phi:=\varphi \circ \rho_{\bm}$.
Then
\begin{equation*}
\Delta_{f,p}\,\Phi \geq  -\left( \left(\varphi' \right)^{p-1} \right)' \circ \rho_{\bm}
\end{equation*}
in a distribution sense on $M$.
More precisely,
for every non-negative smooth function $\psi:M\to \mathbb{R}$ whose support is compact and contained in $\inte M$,
we have
\begin{equation}\label{eq:global p-Laplacian comparison}
\int_{M}\,   \Vert \nabla \Phi \Vert^{p-2} g\left(\nabla \psi, \nabla \Phi  \right)\, d\,m_{f}\\
\geq \int_{M}\,\psi \left( -\left( \left(\varphi' \right)^{p-1} \right)' \circ \rho_{\bm}\right)   \,d\,m_{f}.
\end{equation}
\end{prop}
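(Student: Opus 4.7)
The plan is to reduce the distributional inequality to the pointwise Lemma \ref{lem:p-Laplacian comparison} via integration by parts on subdomains that avoid the cut locus $\cut\bm$, and then take a limit. Fix a non-negative smooth test function $\psi:M\to\mathbb{R}$ with support compact and contained in $\inte M$. I would apply Lemma \ref{lem:avoiding the cut locus} to $\Omega=\inte M$, whose closure is $M$ and whose boundary is the smooth hypersurface $\bm$. This produces an increasing sequence $\{\Omega_{k}\}$ of closed subsets of $M$ whose union equals $M\setminus\cut\bm$, with each $\partial\Omega_{k}$ a smooth hypersurface satisfying $\partial\Omega_{k}\cap\bm=\bm$. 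On every $\Omega_{k}$ the distance $\rho_{\bm}$ is smooth, and hence so is $\Phi=\varphi\circ\rho_{\bm}$.

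Next, on each $\Omega_{k}$ I would apply the weighted divergence theorem to the vector field $\|\nabla\Phi\|^{p-2}\,\nabla\Phi$, using the defining identity $\Delta_{f,p}\Phi=-e^{f}\,\Div(e^{-f}\,\|\nabla\Phi\|^{p-2}\,\nabla\Phi)$, to obtain
\begin{equation*}
\int_{\Omega_{k}}\|\nabla\Phi\|^{p-2}\,g(\nabla\psi,\nabla\Phi)\,dm_{f}
=\int_{\Omega_{k}}\psi\,\Delta_{f,p}\Phi\,dm_{f}
+\int_{\partial\Omega_{k}}\psi\,\|\nabla\Phi\|^{p-2}\,g(\nabla\Phi,\nu_{k})\,dm_{f}|_{\partial\Omega_{k}},
\end{equation*}
where $\nu_{k}$ is the unit outer normal of $\Omega_{k}$. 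The pointwise inequality in Lemma \ref{lem:p-Laplacian comparison} then bounds the first term on the right below by $\int_{\Omega_{k}}\psi\,\bigl(-((\varphi')^{p-1})'\circ\rho_{\bm}\bigr)\,dm_{f}$.

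The crucial step will be to show the boundary term is non-negative. Its contribution from $\partial\Omega_{k}\cap\bm=\bm$ vanishes because $\psi$ has support in $\inte M$ and so is zero on $\bm$. On $\partial\Omega_{k}\setminus\bm$, since $\varphi'\geq 0$ one has $\nabla\Phi=(\varphi'\circ\rho_{\bm})\,\nabla\rho_{\bm}$, and therefore $\|\nabla\Phi\|^{p-2}\,g(\nabla\Phi,\nu_{k})=(\varphi'\circ\rho_{\bm})^{p-1}\,g(\nabla\rho_{\bm},\nu_{k})$, which is non-negative by the orientation property $g(\nu_{k},\nabla\rho_{\bm})\geq 0$ in Lemma \ref{lem:avoiding the cut locus} (4). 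Dropping this non-negative boundary contribution yields the inequality (\ref{eq:global p-Laplacian comparison}) integrated over $\Omega_{k}$ in place of $M$.

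Finally, I would pass $k\to\infty$. Since $\bigcup_{k}\Omega_{k}=M\setminus\cut\bm$ has full $m_{f}$-measure and the relevant integrands are uniformly bounded on the compact support of $\psi$, dominated convergence produces the desired global inequality. The main obstacle is the singular behavior of $\Phi$ across $\cut\bm$; the approximation by $\Omega_{k}$, together with the sign condition $g(\nu_{k},\nabla\rho_{\bm})\geq 0$ from Lemma \ref{lem:avoiding the cut locus} (4), is precisely what lets us discard, rather than track, the boundary contributions that would otherwise arise from the non-smoothness of $\rho_{\bm}$.
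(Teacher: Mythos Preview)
Your proposal is correct and follows essentially the same approach as the paper: apply Lemma \ref{lem:avoiding the cut locus} with $\bar{\Omega}=M$ to obtain the exhaustion $\{\Omega_{k}\}$, integrate by parts (Green's formula) on each $\Omega_{k}$, use Lemma \ref{lem:p-Laplacian comparison} for the interior term and the sign condition $g(\nu_{k},\nabla\rho_{\bm})\geq 0$ together with $\psi|_{\bm}=0$ to control the boundary term, then let $k\to\infty$. Your write-up is in fact slightly more explicit than the paper's in spelling out why the boundary integral is non-negative and why the limit passage is justified.
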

\begin{proof}
By Lemma \ref{lem:avoiding the cut locus},
there exists a sequence $\{\Omega_{k}\}_{k\in \mathbb{N}}$ of closed subsets of $M$ such that
for every $k$,
the set $\partial \Omega_{k}$ is a smooth hypersurface in $M$ satisfying the following:
(1) for all $k_{1},k_{2}\in \mathbb{N}$ with $k_{1}<k_{2}$,
we have $\Omega_{k_{1}}\subset \Omega_{k_{2}}$;
(2) $M\setminus \cut \bm=\bigcup_{k}\,\Omega_{k}$;
(3) $\partial \Omega_{k}\cap \bm=\bm$ for all $k$;
(4) for each $k$,
on $\partial \Omega_{k}\setminus \bm$,
there exists the unit outer normal vector field $\nu_{k}$ for $\Omega_{k}$ with $g(\nu_{k},\nabla \rho_{\bm})\geq 0$.
For the canonical Riemannian volume measure $\vol_{k}$ on $\partial \Omega_{k}\setminus \bm$,
put $m_{f,k}:=e^{-f|_{\partial \Omega_{k}\setminus \bm}}\,\vol_{k}$.
Let $\psi:M\to \mathbb{R}$ be a non-negative smooth function whose support is compact and contained in $\inte M$.
By the Green formula,
and by $\partial \Omega_{k}\cap \bm=\bm$,
\begin{multline*}
\int_{\Omega_{k}}\,  \Vert \nabla \Phi \Vert^{p-2} g\left(\nabla \psi, \nabla \Phi  \right)  \, d\,m_{f}\\
=\int_{\Omega_{k}}\, \psi\,\Delta_{f,p} \Phi \,d\,m_{f}+\int_{\partial \Omega_{k}\setminus \bm}\, \Vert \nabla \Phi \Vert^{p-2}\, \psi \,g\left(\nu_{k},\nabla \Phi \right) \, d\,m_{f,k}.
\end{multline*}
From Lemma \ref{lem:p-Laplacian comparison} and $g(\nu_{k},\nabla \rho_{\bm})\geq 0$,
we derive
\begin{equation*}\label{eq:weak global p-Laplacian comparison}
\int_{\Omega_{k}}\,   \Vert \nabla \Phi \Vert^{p-2} g\left(\nabla \psi, \nabla \Phi  \right)\, d\,m_{f}
\geq \int_{\Omega_{k}}\,\psi\,  \left( -\left( \left(\varphi' \right)^{p-1} \right)' \circ \rho_{\bm}\right)\,d\,m_{f}.
\end{equation*}
Letting $k \to \infty$,
we have the proposition.
\end{proof}
\begin{rem}\label{rem:Equality case in global p-Laplacian comparison}
In Proposition \ref{prop:global p-Laplacian comparison},
we assume that
the equality in (\ref{eq:global p-Laplacian comparison}) holds.
Then for every $x\in \bm$,
and for every $t\in (0,\tau(x))$,
the equality in (\ref{eq:p-Laplacian comparison}) also holds.
The equality case in Proposition \ref{prop:global p-Laplacian comparison}
corresponds to that in Lemma \ref{lem:p-Laplacian comparison} (see Remark \ref{rem:Equality in p-Laplacian comparison}).
\end{rem}
\begin{rem}
Perales \cite{P} has shown a Laplacian comparison inequality for the distance function from the boundary in a barrier sense
for manifolds with boundary of non-negative Ricci curvature.
We can prove that
the Laplacian comparison inequality for $\rho_{\bm}$ in Lemma \ref{lem:Laplacian comparison} globally holds on $M$ in a barrier sense.
\end{rem}

\section{Volume comparisons}\label{sec:Volume comparisons}
Let $M$ be an $n$-dimensional,
connected complete Riemannian manifold with boundary with Riemannian metric $g$,
and let $f:M\to \mathbb{R}$ be a smooth function.
\subsection{Absolute volume comparisons}\label{sec:Absolute volume comparison}
We have the following absolute volume comparison inequality of Heintze-Karcher type:
\begin{lem}\label{lem:absolute volume comparison}
Suppose that
$\bm$ is compact.
For $\kappa,\lambda \in \mathbb{R}$,
and for $N\in (-\infty,1]$
we suppose $\ric^{N}_{f,M}\geq \kappa$,
and $H_{f,\bm} \geq \lambda$.
Then for all $r\in (0,\infty)$
we have
\begin{equation*}
m_{f}(B_{r}(\bm))\leq \int_{\bm}\int^{\min\{r,\tau(x)\}}_{0}\,e^{-\int^{t}_{0}\,F^{-2}_{x}(u)\,F_{\kappa,\lambda,x}(u)\,du}\,dt\, dm_{f,\bm},
\end{equation*}
where $F_{\kappa,\lambda,x}$ is the function defined as $(\ref{eq:subharmonic warping function})$.
In particular,
if $\kappa$ and $\lambda$ satisfy the subharmonic-condition,
then for all $r\in (0,\infty)$
we have $m_{f}(B_{r}(\bm))\leq r\, m_{f,\bm}(\bm)$,
and hence $(\ref{eq:volume growth upper estimate})$.
\end{lem}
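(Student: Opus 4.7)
The plan is to assemble this lemma from two ingredients already in hand: the volume slicing identity of Lemma~\ref{lem:Basic volume lemma} and the Jacobian monotonicity of Lemma~\ref{lem:Basic comparison}. The first rewrites $m_f(B_r(\bm))$ as a fibered integral along normal geodesics from $\bm$, and the second controls the fiber-integrand pointwise by a model expression. Combining them and then integrating over $\bm$ should deliver the stated bound essentially immediately.

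First I would apply Lemma~\ref{lem:Basic volume lemma} to write
\[
m_f(B_r(\bm)) = \int_{\bm} \int_0^r \bar{\theta}_f(t,x)\, dt\, d\vol_h.
\]
Because $\bar{\theta}_f(t,x) = 0$ for $t \geq \tau(x)$ by $(\ref{eq:extend volume element})$, the inner integral truncates to $\int_0^{\min\{r,\tau(x)\}} \theta_f(t,x)\, dt$. Next I would apply Lemma~\ref{lem:Basic comparison} with $s=0$; since the $\bm$-Jacobi fields $Y_{x,i}$ satisfy $Y_{x,i}(0) = e_{x,i}$ for an orthonormal basis $\{e_{x,i}\}$ of $T_x\bm$, we get $\theta(0,x) = 1$, hence $\theta_f(0,x) = e^{-f(x)}$. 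The comparison then reads
\[
\theta_f(t,x) \leq e^{-\int_0^t F_x^{-2}(u)\, F_{\kappa,\lambda,x}(u)\, du}\, e^{-f(x)}.
\]
Inserting this into the fiber integral and absorbing the factor $e^{-f(x)}$ into $d\vol_h$ to form $dm_{f,\bm} = e^{-f|_{\bm}}\, d\vol_h$ yields the first displayed inequality of the lemma.

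For the second assertion, I would note that the subharmonic-condition is exactly the statement $\kappa\int_0^t F_x^2(s)\, ds \geq -\lambda$ for all $x \in \bm$ and $t \in (0,\tau(x))$, so by $(\ref{eq:subharmonic warping function})$ the function $F_{\kappa,\lambda,x}$ is non-negative on $[0,\tau(x))$. Consequently the exponent $-\int_0^t F_x^{-2} F_{\kappa,\lambda,x}$ is non-positive, so the model integrand is at most $1$. This reduces the double integral to
\[
m_f(B_r(\bm)) \leq \int_{\bm} \min\{r,\tau(x)\}\, dm_{f,\bm} \leq r\, m_{f,\bm}(\bm),
\]
and dividing by $r$ and taking $\limsup$ (using $\min\{r,\tau(x)\}/r \leq 1$) yields $(\ref{eq:volume growth upper estimate})$.

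There is no serious obstacle here: the analytic content lives entirely in Lemmas~\ref{lem:Basic volume lemma} and \ref{lem:Basic comparison}, and this proof is purely organizational. The only points requiring a touch of care are the boundary normalization $\theta_f(0,x) = e^{-f(x)}$ (to match the weighted measure on $\bm$) and the truncation of the $t$-integral at $\tau(x)$ coming from $(\ref{eq:extend volume element})$, so that the $\min\{r,\tau(x)\}$ appears with the correct sign.
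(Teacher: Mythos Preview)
Your proposal is correct and follows essentially the same approach as the paper's proof: both invoke Lemma~\ref{lem:Basic volume lemma} for the volume slicing and Lemma~\ref{lem:Basic comparison} with $s=0$ for the pointwise Jacobian bound $\bar{\theta}_f(t,x)\leq e^{-\int_0^t F_x^{-2}F_{\kappa,\lambda,x}}\,e^{-f(x)}$, then integrate and absorb $e^{-f(x)}$ into $dm_{f,\bm}$. Your explicit remarks on the normalization $\theta_f(0,x)=e^{-f(x)}$ and the truncation at $\tau(x)$ are the only elaborations beyond the paper's more compressed write-up.
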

\begin{proof}
Define a function $\tilde{\theta}:[0,\infty) \times \bm \to \mathbb{R}$ by
\begin{equation*}\label{ref:model density}
\tilde{\theta}(t,x) := \begin{cases}
                            e^{-\int^{t}_{0}\,F^{-2}_{x}(u)\,F_{\kappa,\lambda,x}(u)\,du} & \text{if $t< \tau(x)$}, \\
                            0           & \text{if $t \geq \tau(x)$}.
                       \end{cases}
\end{equation*}
By Lemma \ref{lem:Basic comparison},
for all $x \in \bm$ and $t \in (0,\infty)$
we see $\bar{\theta}_{f}(t,x) \leq \tilde{\theta}(t,x)\,e^{-f(x)}$,
where $\bar{\theta}_{f}$ is the function defined as (\ref{eq:extend volume element}).
Integrate the both sides of the inequality over $(0,r)$ with respect to $t$,
and then do that over $\bm$ with respect to $x$.
From Lemma \ref{lem:Basic volume lemma},
we deduce
\begin{equation*}
m_{f}(B_{r}(\bm)) \leq \int_{\bm}\int^{r}_{0}\,\tilde{\theta}(t,x) \,dt\, dm_{f,\bm}.
\end{equation*}
This implies the lemma.
\end{proof}
\begin{rem}
Under a lower $N$-weighted Ricci curvature bound,
Bayle \cite{B} has stated an inequality of Heintze-Karcher type without proof in the case of $N \in [n,\infty)$.
Morgan \cite{Mo} has proved it in the case of $N=\infty$,
and Milman \cite{M} has done in the case of $N \in (-\infty,1)$.
\end{rem}

\subsection{Relative volume comparisons}\label{sec:Relative volume comparison}
We have the following relative volume comparison theorem of Bishop-Gromov type:
\begin{thm}\label{thm:relative volume comparison}
Let $M$ be a connected complete Riemannian manifold with boundary,
and let $f:M\to \mathbb{R}$ be a smooth function.
Suppose that
$\bm$ is compact.
Let $\kappa \in \mathbb{R}$ and $\lambda \in \mathbb{R}$ satisfy the subharmonic-condition.
For $N\in (-\infty,1]$
we suppose $\ric^{N}_{f,M}\geq \kappa$,
and $H_{f,\bm}\geq \lambda$.
Then for all $r,R\in (0,\infty)$ with $r\leq R$
\begin{equation}\label{eq:relative volume comparison}
\frac{m_{f} (B_{R}(\bm))}{m_{f}(B_{r}(\bm))}\leq \frac{R}{r}.
\end{equation}
\end{thm}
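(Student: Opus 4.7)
The plan is to reduce the global volume comparison to a pointwise, one-variable monotonicity statement along each normal geodesic $\gamma_{x}$, and then integrate over $\bm$.

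First, I would invoke Lemma \ref{lem:Basic volume lemma} to write, for every $r\in(0,\infty)$,
\begin{equation*}
m_{f}(B_{r}(\bm)) = \int_{\bm}\int_{0}^{r}\bar{\theta}_{f}(t,x)\,dt\,d\vol_{h}(x),
\end{equation*}
so that the desired inequality would follow if I can show, for each fixed $x\in\bm$ and $r\leq R$,
\begin{equation*}
r\int_{0}^{R}\bar{\theta}_{f}(t,x)\,dt \leq R\int_{0}^{r}\bar{\theta}_{f}(t,x)\,dt,
\end{equation*}
since integrating this inequality over $\bm$ against $d\vol_{h}$ yields $r\,m_{f}(B_{R}(\bm))\leq R\,m_{f}(B_{r}(\bm))$.

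The pointwise inequality is a consequence of the monotonicity of $\bar{\theta}_{f}(\cdot,x)$. Indeed, under the subharmonic-condition, Lemma \ref{lem:Basic comparison} gives $\theta_{f}(t,x)\leq \theta_{f}(s,x)$ for $0\leq s\leq t<\tau(x)$; combined with the definition \eqref{eq:extend volume element} which sets $\bar{\theta}_{f}(t,x)=0$ for $t\geq \tau(x)$, the function $t\mapsto \bar{\theta}_{f}(t,x)$ is non-negative and non-increasing on $[0,\infty)$. I would then apply the elementary fact that for any non-negative non-increasing function $\psi$ on $[0,\infty)$, the averaged integral $r\mapsto r^{-1}\int_{0}^{r}\psi(t)\,dt$ is non-increasing; this is verified by differentiating (where $\psi$ is continuous) and using $\int_{0}^{r}\psi\geq r\psi(r)$, or more cleanly by a cut-and-paste argument on the subintervals $[0,r]$ and $[r,R]$ showing that the average over $[r,R]$ does not exceed that over $[0,r]$.

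I do not anticipate a serious obstacle: the only technical points are that $\bar{\theta}_{f}$ can jump down to $0$ at $t=\tau(x)$ (which is harmless for the monotonicity of the average, since the elementary lemma only requires non-increasingness, not continuity) and that $\tau(x)$ may be infinite (in which case the cutoff is vacuous). The subharmonic-condition enters exactly to guarantee the global monotonicity of $\bar{\theta}_{f}(\cdot,x)$ across the full range of $t$; without it, Lemma \ref{lem:Basic comparison} would only yield an exponentially weighted monotonicity, and the clean Bishop--Gromov ratio $R/r$ would have to be replaced by a model ratio involving $F_{\kappa,\lambda,x}$.
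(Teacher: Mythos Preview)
Your proposal is correct and follows essentially the same route as the paper: both reduce to the pointwise monotonicity of $\bar{\theta}_{f}(\cdot,x)$ furnished by Lemma~\ref{lem:Basic comparison} under the subharmonic-condition, and then integrate over $\bm$ via Lemma~\ref{lem:Basic volume lemma}. The paper carries out your ``cut-and-paste'' step explicitly by integrating the inequality $\bar{\theta}_{f}(t,x)\leq \bar{\theta}_{f}(s,x)$ over $(s,t)\in(0,r)\times(r,R)$ to obtain $r\int_{r}^{R}\bar{\theta}_{f}\,dt\leq (R-r)\int_{0}^{r}\bar{\theta}_{f}\,ds$, which is algebraically equivalent to your averaged-integral inequality.
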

\begin{proof}
Lemma \ref{lem:Basic comparison} implies that
for all $s,t\in [0,\infty)$ with $s\leq t$
we have $\bar{\theta}_{f}(t,x) \leq \bar{\theta}_{f}(s,x)$,
where $\bar{\theta}_{f}$ is the function defined as (\ref{eq:extend volume element}).
By integrating the both sides over $(0,r)$ with respect to $s$,
and then doing that over $(r,R)$ with respect to $t$,
we see
\begin{equation*}\label{eq:volume comparison3}
r\,\int^{R}_{r}\bar{\theta}_{f}(t,x)\,dt \leq (R-r)\,\int^{r}_{0}\bar{\theta}_{f}(s,x)\,ds.
\end{equation*}
From Lemma \ref{lem:Basic volume lemma},
we derive
\begin{equation*}
        \frac{m_{f}( B_{R}(\bm))}{m_{f}(B_{r}(\bm))}
 =    1+\frac{\int_{\bm} \int^{R}_{r}\bar{\theta}_{f}(t,x)\,dt\,d\vol_{h}}{\int_{\bm} \int^{r}_{0}\bar{\theta}_{f}(s,x)\,ds\,d\vol_{h}}
\leq 1+\frac{R-r}{r}
  =      \frac{R}{r}.
\end{equation*}
This proves the theorem.
\end{proof}
When $\kappa=0$ and $\lambda=0$,
Theorem \ref{thm:relative volume comparison} has been proved in the unweighted case in \cite{Sa1},
and in the usual weighted case in \cite{Sa2}.
\begin{rem}
In \cite{Sa1},
the author has proved a measure contraction inequality around the boundary in the unweighted case.
We can prove a similar measure contraction inequality in our setting.
The measure contraction inequality enables us to give another proof of Theorem \ref{thm:relative volume comparison}.
\end{rem}

\subsection{Volume growth rigidity}\label{sec:Volume growth rigidity}
We show the following:
\begin{lem}\label{lem:cut point lower bound}
Suppose that
$\bm$ is compact.
Let $\kappa \in \mathbb{R}$ and $\lambda \in \mathbb{R}$ satisfy the subharmonic-condition.
For $N \in (-\infty,1]$
we suppose $\ric^{N}_{f,M}\geq \kappa$,
and $H_{f,\bm} \geq \lambda$.
Assume that
there exists $R\in (0,\infty)$ such that
for every $r\in (0,R]$
the equality in $(\ref{eq:relative volume comparison})$ holds.
Then $\tau \geq R$ on $\bm$.
\end{lem}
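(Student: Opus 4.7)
The plan is to pin down the equality case in the proof of Theorem \ref{thm:relative volume comparison}. That proof establishes, for every $x \in \bm$ and every $0 < r \le R$, the pointwise inequality
\begin{equation*}
r \int^{R}_{r}\bar{\theta}_{f}(t,x)\,dt \le (R-r)\int^{r}_{0}\bar{\theta}_{f}(s,x)\,ds,
\end{equation*}
which comes from integrating the monotonicity $\bar{\theta}_{f}(t,x) \le \bar{\theta}_{f}(s,x)$ (valid for $s \le t$) supplied by Lemma \ref{lem:Basic comparison} under the subharmonic-condition. Integrating this inequality over $\bm$ against $\vol_{h}$ and invoking Lemma \ref{lem:Basic volume lemma} yields $(\ref{eq:relative volume comparison})$. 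First I would fix some $r_{0} \in (0,R)$ and use the assumed equality at $r = r_{0}$ to deduce that the displayed pointwise inequality must hold with equality for $\vol_{h}$-almost every $x \in \bm$.

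Next I would extract pointwise information by introducing $I_{x}(t) := \int^{t}_{0}\bar{\theta}_{f}(s,x)\,ds$. This is absolutely continuous with $I_{x}(0) = 0$, and, because $\bar{\theta}_{f}(\cdot,x)$ is non-increasing on $[0,\infty)$ (positive and decreasing on $[0,\tau(x))$ and then identically zero), the primitive $I_{x}$ is concave on $[0,\infty)$. The pointwise equality above at $r = r_{0}$ is equivalent to $I_{x}(r_{0})/r_{0} = I_{x}(R)/R$. Setting $c_{x} := I_{x}(R)/R$, the concave function $J(t) := I_{x}(t) - c_{x}\,t$ satisfies $J(0) = J(r_{0}) = J(R) = 0$, and a standard concavity argument (a concave function that vanishes at the two endpoints and at an interior point of an interval vanishes identically on that interval) forces $I_{x}$ to be affine on $[0,R]$.

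Consequently $\bar{\theta}_{f}(t,x) = I_{x}'(t) = c_{x}$ for almost every $t \in [0,R]$. Since $\bar{\theta}_{f}(\cdot,x)$ is non-increasing and $\lim_{t \to 0^{+}} \bar{\theta}_{f}(t,x) = e^{-f(x)} > 0$, the constant must be $c_{x} = e^{-f(x)} > 0$. Because $\bar{\theta}_{f}(t,x) = 0$ for $t \ge \tau(x)$, the assumption $\tau(x) < R$ would force $\bar{\theta}_{f}(\cdot,x)$ to vanish on the positive-measure set $[\tau(x),R]$, contradicting $c_{x} > 0$. Hence $\tau(x) \ge R$ for $\vol_{h}$-a.e. $x \in \bm$. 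To upgrade this to every $x$, I would use the continuity of $\tau:\bm \to \mathbb{R} \cup \{\infty\}$ recalled in Subsection 2.3: the set $\{x \in \bm : \tau(x) < R\}$ is then open in $\bm$, and having $\vol_{h}$-measure zero it must be empty.

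The main delicate point is the affine-extraction step for $I_{x}$, which must accommodate the fact that $\bar{\theta}_{f}(\cdot,x)$ is discontinuous at $\tau(x)$; passing to the primitive $I_{x}$ and invoking concavity neatly circumvents this issue.
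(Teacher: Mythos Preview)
Your argument is correct, but it takes a genuinely different route from the paper's. The paper proceeds by contradiction: assuming $\tau(x_{0})<R$ for some $x_{0}$, it uses continuity of $\tau$ to find a geodesic ball $B\subset\bm$ on which $\tau\le t_{0}+\epsilon<R$, and then the monotonicity $\bar{\theta}_{f}(t,x)\le e^{-f(x)}$ from Lemma~\ref{lem:Basic comparison} yields the strict inequality
\[
m_{f}(B_{R}(\bm))\le R\,m_{f,\bm}(\bm\setminus B)+(t_{0}+\epsilon)\,m_{f,\bm}(B)<R\,m_{f,\bm}(\bm).
\]
This contradicts $m_{f}(B_{R}(\bm))=R\,m_{f,\bm}(\bm)$, which follows from the equality hypothesis by letting $r\to 0$ in $m_{f}(B_{r}(\bm))/r=m_{f}(B_{R}(\bm))/R$ and using $m_{f}(B_{r}(\bm))/r\to m_{f,\bm}(\bm)$.

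By contrast, you trace the equality back through the proof of Theorem~\ref{thm:relative volume comparison} to obtain the pointwise identity $I_{x}(r_{0})/r_{0}=I_{x}(R)/R$ for a.e.\ $x$, and then use concavity of the primitive $I_{x}$ to force it to be affine on $[0,R]$. This is a perfectly valid alternative. The paper's approach is shorter and more global (one strict volume inequality does the job), while yours is more intrinsic in that it never leaves the ingredients of the relative comparison and in fact extracts the stronger pointwise conclusion $\bar{\theta}_{f}(\cdot,x)\equiv e^{-f(x)}$ on $[0,R)$ for every $x$ (after the continuity upgrade). Both ultimately rely on continuity of $\tau$: the paper to produce a ball $B$ of positive measure, you to pass from a.e.\ $x$ to all $x$.
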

\begin{proof}
The proof will be done by contradiction.
Suppose that
there exists $x_{0}\in \bm$
such that $\tau(x_{0})<R$.
Put $t_{0}:=\tau(x_{0})$.
Take $\epsilon \in (0,\infty)$ with $t_{0}+\epsilon<R$.
By the continuity of $\tau$,
there exists a closed geodesic ball $B$ in $\bm$ centered at $x_{0}$ such that
$\tau$ is smaller than or equal to $t_{0}+\epsilon$ on $B$.
Using Lemma \ref{lem:Basic comparison},
we see
\begin{equation*}
m_{f}(B_{R}(\bm))\leq R\,m_{f,\bm}(\partial M \setminus B)+(t_{0}+\epsilon)\,m_{f,\bm}(B)<R\,m_{f,\bm}(\bm).
\end{equation*}
On the other hand,
$m_{f}(B_{R}(\bm))/m_{f,\bm}(\bm)$ is equal to $R$.
This is a contradiction.
\end{proof}
Suppose that
$\bm$ is compact.
Let $\kappa \in \mathbb{R}$ and $\lambda \in \mathbb{R}$ satisfy the subharmonic-condition.
For $N \in (-\infty,1]$
we suppose $\ric^{N}_{f,M}\geq \kappa$,
and $H_{f,\bm} \geq \lambda$.
Assume that
there exists $R \in (0,\infty)$ such that
for every $r\in (0,R]$
the equality in (\ref{eq:relative volume comparison}) holds.
Then for each $r\in (0,R)$
the level set $\rho_{\bm}^{-1}(r)$ is an $(n-1)$-dimensional submanifold of $M$ (see Lemma \ref{lem:cut point lower bound}).
In particular,
$(B_{r}(\bm),g)$ is an $n$-dimensional (not necessarily, connected) complete Riemannian manifold with boundary.
We denote by $d_{B_{r}(\bm)}$ and by $d_{[0,r]\times_{F} \bm}$
the Riemannian distances on $(B_{r}(\bm),g)$ and on $[0,r]\times_{F} \bm$,
respectively,
where $[0,r]\times_{F} \bm$ is the twisted product space $([0,r] \times \bm,dt^{2}+F_{x}^{2}(t)\,h)$.
\begin{lem}\label{lem:half rigidity}
Suppose that
$\bm$ is compact.
Let $\kappa \in \mathbb{R}$ and $\lambda \in \mathbb{R}$ satisfy the subharmonic-condition.
For $N \in (-\infty,1]$
we suppose $\ric^{N}_{f,M}\geq \kappa$,
and $H_{f,\bm} \geq \lambda$.
Assume that
there exists $R\in (0,\infty)$ such that
for every $r\in (0,R]$
the equality in $(\ref{eq:relative volume comparison})$ holds.
Then for each $r\in (0,R)$
the metric space $(B_{r}(\bm),d_{B_{r}(\bm)})$ is isometric to $([0,r]\times_{F} \bm,d_{[0,r]\times_{F} \bm})$.
Moreover,
if $N \in (-\infty,1)$,
then for every $x\in \bm$
the function $f\circ \gamma_{x}$ is constant on $[0,r]$;
in particular,
$(B_{r}(\bm),d_{B_{r}(\bm)})$ is isometric to $([0,r]\times \bm,d_{[0,r]\times \bm})$.
\end{lem}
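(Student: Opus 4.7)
The plan is to unpack the equality in the Bishop--Gromov comparison into pointwise equality in the Laplacian comparison, then invoke Lemma~\ref{lem:Equality in Basic comparison} to pin down the Jacobi fields along every normal geodesic, and finally read off the twisted product structure from the shape of those Jacobi fields.

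First I would exploit that the equality in $(\ref{eq:relative volume comparison})$ holds for \emph{every} $r\in (0,R]$ simultaneously. By Lemma~\ref{lem:Basic volume lemma}, the function $r\mapsto m_{f}(B_{r}(\bm))/r$ is $r^{-1}\int_{\bm}\int_{0}^{r}\bar{\theta}_{f}(t,x)\,dt\,d\vol_{h}$; the equality hypothesis forces this quantity to be constantly equal to $m_{f,\bm}(\bm)$ on $(0,R]$. Differentiating in $r$ gives
\begin{equation*}
\int_{\bm}\bar{\theta}_{f}(r,x)\,d\vol_{h}=\int_{\bm}e^{-f(x)}\,d\vol_{h}
\end{equation*}
for almost every $r\in(0,R)$. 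Since Lemma~\ref{lem:Basic comparison} under the subharmonic-condition yields $\bar{\theta}_{f}(r,x)\leq \bar{\theta}_{f}(0,x)=e^{-f(x)}$ pointwise, equality of the integrals forces $\bar{\theta}_{f}(r,x)=e^{-f(x)}$ a.e. on $(0,R)\times\bm$. Lemma~\ref{lem:cut point lower bound} gives $\tau\geq R$ on $\bm$, so $\theta_{f}(\cdot,x)$ is smooth on $[0,R)$ for every $x$, and the equality extends to all $(r,x)\in[0,R)\times\bm$ by continuity.

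Next I would differentiate $\theta_{f}(t,x)\equiv e^{-f(x)}$ in $t$ and use $(\ref{eq:Laplacian representation})$ to conclude $\Delta_{f}\rho_{\bm}(\gamma_{x}(t))=0$ for all $x\in\bm$ and all $t\in(0,R)$. Applying Lemma~\ref{lem:Equality in Basic comparison} at any $t_{0}\in(0,R)$ then delivers three conclusions pointwise on $\bm$: $\kappa=0$ and $\lambda=0$; for every orthonormal frame $\{e_{x,i}\}$ of $T_{x}\bm$, the $\bm$-Jacobi fields satisfy $Y_{x,i}=F_{x}E_{x,i}$ on $[0,R)$ with $E_{x,i}$ the parallel transports of $e_{x,i}$ along $\gamma_{x}$; and if additionally $N\in(-\infty,1)$, then $f\circ\gamma_{x}$ is constant on $[0,R)$, so $F_{x}\equiv 1$.

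Finally I would turn this Jacobi field identification into a global isometry. Fix $r\in(0,R)$. Since $\tau\geq R>r$, the normal exponential map $\expp$ restricts to a smooth diffeomorphism from $\{(x,tu_{x}):x\in\bm,\,t\in[0,r]\}$ onto $B_{r}(\bm)$, and the map $\Phi:[0,r]\times\bm\to B_{r}(\bm)$, $\Phi(t,x):=\gamma_{x}(t)$, is a diffeomorphism of manifolds with boundary. The identity $Y_{x,i}(t)=F_{x}(t)E_{x,i}(t)$, together with the parallelism of $E_{x,i}$ and the fact that $\partial_{t}\Phi=\gamma_{x}'(t)$ is a unit vector orthogonal to the level sets, directly identifies the pullback metric $\Phi^{\ast}g$ with $dt^{2}+F_{x}^{2}(t)h$, so $\Phi$ is a Riemannian isometry with boundary between $(B_{r}(\bm),g)$ and the twisted product $[0,r]\times_{F}\bm$, hence a metric isometry of the associated length metrics. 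When $N<1$, the conclusion $F_{x}\equiv 1$ collapses the twisted product to the genuine product $[0,r]\times\bm$.

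I expect the main obstacle to be the first step: parlaying the one-parameter family of scalar equalities into the pointwise identity $\theta_{f}(r,x)=e^{-f(x)}$. The monotonicity from Lemma~\ref{lem:Basic comparison} and the smoothness of $\theta_{f}$ on the complement of the cut locus are what make the a.e. identity propagate to a pointwise one; once that is in hand, everything else is a direct application of the equality statement in Lemma~\ref{lem:Equality in Basic comparison} followed by a standard unfolding of the normal exponential map.
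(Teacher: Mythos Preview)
Your proposal is correct and follows essentially the same route as the paper: extract from the volume equality that $\theta_{f}(\cdot,x)$ is constant on $[0,R)$, pass via $(\ref{eq:Laplacian representation})$ to $\Delta_{f}\rho_{\bm}\equiv 0$ along each $\gamma_{x}$, invoke Lemma~\ref{lem:Equality in Basic comparison} to identify the Jacobi fields as $F_{x}E_{x,i}$, and read off the twisted product via $\Phi(t,x)=\gamma_{x}(t)$. Your first step is in fact more carefully justified than the paper's (which simply asserts $\theta_{f}(t,x)=\theta_{f}(r,x)$ from the equality in $(\ref{eq:relative volume comparison})$); the only item the paper adds that you omit is the remark that one may reduce to connected $B_{r}(\bm)$ since connected components of $\bm$ and of $B_{r}(\bm)$ correspond, which is harmless.
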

\begin{proof}
Since each connected component of $\bm$ one-to-one corresponds to the connected component of $B_{r}(\bm)$,
it suffices to consider the case where
$B_{r}(\bm)$ is connected.
For each $x\in \bm$
we choose an orthonormal basis $\{ e_{x,i} \}_{i=1}^{n-1}$ of $T_{x}\bm$.
Let $\{Y_{x,i}\}^{n-1}_{i=1}$ be the $\bm$-Jacobi fields along $\gamma_{x}$
with initial conditions $Y_{x,i}(0)=e_{x,i}$ and $Y_{x,i}'(0)=-A_{u_{x}}e_{x,i}$.
Since the equality in $(\ref{eq:relative volume comparison})$ holds,
for all $t \in [0,r]$
we see $\theta_{f}(t,x)=\theta_{f}(r,x)$.
By (\ref{eq:Laplacian representation}),
for all $t\in (0,r]$
we see $\Delta_{f}\rho_{\bm}(\gamma_{x}(t))=0$.
Lemma \ref{lem:Equality in Basic comparison} implies that
we have $\kappa=0$ and $\lambda=0$,
and for all $i$
we have $Y_{x,i}=F_{x}E_{x,i}$ on $[0,r]$,
where $\{E_{x,i}\}^{n-1}_{i=1}$ are the parallel vectors field along $\gamma_{x}$ with initial condition $E_{x,i}(0)=e_{x,i}$;
moreover,
if $N\in (-\infty,1)$,
then $f \circ \gamma_{x}$ is constant on $[0,r]$.
Define a map $\Phi:[0,r]\times \bm\to B_{r}(\bm)$ by $\Phi(t,x):=\gamma_{x}(t)$.
For each $p\in (0,r)\times \bm$
the differential map $D(\Phi|_{(0,r)\times \bm})_{p}$ sends an orthonormal basis of $T_{p}([0,r]\times \bm)$ to that of $T_{\Phi(p)}B_{r}(\bm)$,
and for each $x\in \{0,r\}\times \bm$
the map $D(\Phi|_{\{0,r\}\times \bm})_{x}$ sends an orthonormal basis of $T_{x}(\{0,r\}\times \bm)$ to that of $T_{\Phi(x)}\partial (B_{r}(\bm))$.
Hence,
$\Phi$ is a Riemannian isometry with boundary from $[0,r]\times_{F} \bm$ to $B_{r}(\bm)$.
\end{proof}
Now,
we are in a position to prove Theorem \ref{thm:volume growth rigidity} and Corollary \ref{cor:warped volume growth rigidity}.
\begin{proof}[Proof of Theorem \ref{thm:volume growth rigidity}]
Suppose that
$\bm$ is compact.
Let $\kappa \in \mathbb{R}$ and $\lambda \in \mathbb{R}$ satisfy the subharmonic-condition.
For $N \in (-\infty,1]$
we suppose $\ric^{N}_{f,M}\geq \kappa$,
and $H_{f,\bm} \geq \lambda$.
Furthermore,
we assume (\ref{eq:volume growth rigidity}).
By Lemma \ref{lem:absolute volume comparison} and Theorem \ref{thm:relative volume comparison},
for every $R\in (0,\infty)$,
and for every $r \in (0,R]$,
\begin{equation*}
\frac{m_{f}(B_{R}(\bm))}{R}=\frac{m_{f}(B_{r}(\bm))}{r}=m_{f,\bm}(\bm);
\end{equation*}
in particular,
the equality in $(\ref{eq:relative volume comparison})$ holds.
From Lemma \ref{lem:cut point lower bound},
we deduce $\tau=\infty$ on $\bm$.
We see $\cut \bm=\emptyset$,
and hence $\bm$ is connected.
Take a sequence $\{r_{i}\}$ with $r_{i}\to \infty$.
By Lemma \ref{lem:half rigidity},
for every $i$
there exists a Riemannian isometry $\Phi_{i}:[0,r_{i}]\times \bm\to B_{r_{i}}(\bm)$ with boundary
from $[0,r_{i}]\times_{F} \bm$ to $B_{r_{i}}(\bm)$ defined by $\Phi_{i}(t,x):=\gamma_{x}(t)$.
Moreover,
if $N\in (-\infty,1)$,
then for each $x\in \bm$
the function $f \circ \gamma_{x}$ is constant on $[0,r_{i}]$.
Since $\cut \bm=\emptyset$,
we obtain a Riemannian isometry $\Phi:[0,\infty)\times\bm\to M$ with boundary
from $[0,\infty)\times_{F} \bm$ to $M$
defined by $\Phi(t,x):=\gamma_{x}(t)$ such that $\Phi|_{[0,r_{i}]\times \bm}=\Phi_{i}$ for all $i$.
Furthermore,
if $N\in (-\infty,1)$,
then $f \circ \gamma_{x}$ is constant on $[0,\infty)$.
Thus,
we complete the proof of Theorem \ref{thm:volume growth rigidity}.
\end{proof}
\medskip

\begin{proof}[Proof of Corollary \ref{cor:warped volume growth rigidity}]
Combining Theorem \ref{thm:volume growth rigidity} and Proposition \ref{prop:twisted to warped},
we conclude Corollary \ref{cor:warped volume growth rigidity}.
\end{proof}

\section{Splitting theorems}\label{sec:Splitting theorems}
Let $M$ be an $n$-dimensional, 
connected complete Riemannian manifold with boundary with Riemannian metric $g$,
and let $f:M\to \mathbb{R}$ be a smooth function.
\subsection{Basic splitting}
Let $\varphi:M\to \mathbb{R}$ be a continuous function,
and let $U$ be a domain contained in $\inte M$.
For $p\in U$,
and for a function $\psi$ defined on an open neighborhood of $p$,
we say that
$\psi$ is a \textit{support function of $\varphi$ at $p$}
if we have $\psi(p)=\varphi(p)$ and $\psi \leq \varphi$.
We say that
$\varphi$ is \textit{$f$-subharmonic on $U$}
if for every $p\in U$,
and for every $\epsilon \in (0,\infty)$,
there exists a smooth,
support function $\psi_{p,\epsilon}$ of $\varphi$ at $p$
such that $\Delta_{f}\, \psi_{p,\epsilon}(p)\leq \epsilon$.

We recall the following maximal principle of Calabi type (see e.g., \cite{C}, and Lemma 2.4 in \cite{FLZ}).
\begin{lem}\label{lem:maximal principle}
Let $U$ be a domain contained in $\inte M$.
If a $f$-subharmonic function on $U$ takes the maximal value at a point in $U$,
then it must be constant on $U$.
\end{lem}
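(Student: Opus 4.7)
The plan is a classical Calabi-type argument using an exponential barrier to upgrade the viscosity-style subharmonicity (expressed through support functions) into a pointwise contradiction at a touching point of the maximum set. Let $M_0 := \sup_U \varphi$ and $A := \varphi^{-1}(M_0) \cap U$, which is closed in $U$ by continuity and nonempty by hypothesis. Supposing for contradiction $A \neq U$, pick $q \in U \setminus A$ close to $A$ so that, after replacing $q$ by a suitable point on a minimising geodesic from $q$ to $A$, the quantity $r_0 := d_M(q, A)$ is smaller than both $d_M(q, \partial U)$ and the injectivity radius at $q$; properness gives $p_1 \in \partial B_{r_0}(q) \cap A$, $\bar B_{r_0}(q) \subset \inte M$, and the radial function $r := d_M(q, \cdot)$ is smooth on $\bar B_{r_0}(q) \setminus \{q\}$.

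Introduce the exponential barrier $v(x) := e^{-\alpha r(x)^2} - e^{-\alpha r_0^2}$. A direct computation using the paper's sign convention $\Delta = -\tr \Hess$ yields
\begin{equation*}
\Delta_f v = e^{-\alpha r^2}\bigl(2\alpha - 4\alpha^2 r^2 - 2\alpha r\, \Delta_f r\bigr).
\end{equation*}
Since $\Delta_f r$ is continuous, hence bounded, on the closed annulus $\bar\Omega := \bar B_{r_0}(q) \setminus B_{r_0/2}(q)$, taking $\alpha$ sufficiently large yields $\Delta_f v \leq -\delta$ on $\bar\Omega$ for some $\delta > 0$. Set $w := \varphi + \epsilon v$. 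The compact set $\bar B_{r_0/2}(q) \subset U \setminus A$ satisfies $\varphi \leq M_0 - \eta$ for some $\eta > 0$, so for $\epsilon$ small we have $w < M_0$ on $\partial B_{r_0/2}(q)$; on the outer sphere $v = 0$, so $w = \varphi \leq M_0$ with $w(p_1) = M_0$. Consequently $\max_{\bar\Omega} w \geq M_0$ is attained at some $q_1 \in \Omega \cup \partial B_{r_0}(q)$.

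Now invoke $f$-subharmonicity: fix $\epsilon' \in (0, \epsilon \delta)$ and choose a smooth support function $\psi_0$ of $\varphi$ at $q_1$ on a neighborhood $V \subset U$ with $\Delta_f \psi_0(q_1) \leq \epsilon'$. Set $\tilde\psi := \psi_0 + \epsilon v$, smooth with $\tilde\psi(q_1) = w(q_1)$. If $q_1 \in \Omega$, then $\tilde\psi \leq w \leq w(q_1) = \tilde\psi(q_1)$ on $V$. If $q_1 \in \partial B_{r_0}(q)$, then $w \leq M_0$ throughout $\bar\Omega$, and shrinking $V$ so $V \cap \bar B_{r_0/2}(q) = \emptyset$ one verifies $\tilde\psi \leq M_0 = \tilde\psi(q_1)$ on $V$ by treating $V \cap \Omega$ (via $w \leq M_0$) and $V \setminus B_{r_0}(q)$ (where $v \leq 0$ and $\psi_0 \leq \varphi \leq M_0$) separately. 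In either case $\tilde\psi$ attains a local maximum at the interior point $q_1$, whence $\Hess \tilde\psi(q_1) \preceq 0$ in the standard sense and $\Delta_f \tilde\psi(q_1) \geq 0$ in the paper's convention. However,
\begin{equation*}
\Delta_f \tilde\psi(q_1) = \Delta_f \psi_0(q_1) + \epsilon\, \Delta_f v(q_1) \leq \epsilon' - \epsilon\delta < 0,
\end{equation*}
a contradiction. Therefore $A = U$ and $\varphi$ is constant on $U$.

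The main obstacle is the case where the maximum of $w$ on $\bar\Omega$ is achieved on the outer sphere $\partial B_{r_0}(q)$ rather than in the open annulus: one cannot then directly conclude that $\tilde\psi$ has a local maximum at $q_1$ from $q_1$ being a maximum of $w$ on $\bar\Omega$, and the two-sided comparison described above (using both $w \leq M_0$ and the sign of $v$ outside $B_{r_0}(q)$) is essential. A secondary technicality is arranging smoothness of $r$ on $\bar\Omega$, which is handled by the injectivity-radius adjustment in the initial choice of $q$ and $r_0$.
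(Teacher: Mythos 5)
Your proof is correct, but note that the paper itself does not prove this lemma: it simply records it as a known maximum principle of Calabi type and refers to Calabi's paper and to Lemma~2.4 of Fang--Li--Zhang. What you have written is a complete and self-contained reconstruction of the classical Calabi barrier argument, correctly adapted to this paper's two nonstandard conventions (the Laplacian defined as $\Delta=-\tr\Hess$, so that ``$f$-subharmonic'' means viscosity-type $\Delta_f\le 0$, and the drift term $g(\nabla f,\nabla\cdot)$), and your identity $\Delta_f v=e^{-\alpha r^2}\bigl(2\alpha-4\alpha^2 r^2-2\alpha r\,\Delta_f r\bigr)$ is the right one under those conventions. The point you flag as the main obstacle is indeed the one subtle step: in the smooth case one would rule out a touching point $q_1$ on the outer sphere by observing that $\nabla w(q_1)=\epsilon\nabla v(q_1)\neq 0$, but in the support-function (viscosity) formulation $\nabla\varphi(q_1)$ is not available, so your two-sided comparison --- $\tilde\psi\le w\le M_0$ on $V\cap\bar\Omega$ and $\tilde\psi=\psi_0+\epsilon v\le M_0$ on $V\setminus B_{r_0}(q)$ because $v\le 0$ there and $\psi_0\le\varphi\le M_0$ --- is exactly what is needed to conclude that $\tilde\psi$ has an interior local maximum at $q_1$ in either case. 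The remaining choices (shrinking $r_0$ below the injectivity radius and below $d_M(q,\partial U)$ by sliding $q$ along a minimizing geodesic toward $A$; choosing $\epsilon$ small against $\eta$ and then $\epsilon'<\epsilon\delta$) are in the right logical order and close the argument.
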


Wylie \cite{W} has proved a subharmonicity of Busemann functions on manifolds without boundary (see Lemma 3.4 in \cite{W}).
In our case,
under an assumption concerning asymptotes for a ray defined in Subsection \ref{subsec:Busemann functions and asymptotes},
the subharmonicity holds in the following form:
\begin{lem}[\cite{W}]\label{lem:subharmonicity of busemann function}
Assume $\sup f(M)<\infty$.
For $N \in (-\infty,1]$
we suppose $\ric^{N}_{f,M}\geq 0$.
Let $\gamma:[0,\infty)\to M$ be a ray that lies in $\inte M$,
and let $U$ be a domain contained in $\inte M$ such that
for each $p\in U$,
there exists an asymptote for $\gamma$ from $p$
that lies in $\inte M$.
Then the Busemann function $b_{\gamma}$ of $\gamma$ is $f$-subharmonic on $U$. 
\end{lem}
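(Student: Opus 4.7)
The plan is to adapt Wylie's argument (\cite{W}, Lemma 3.4) to this setting by constructing smooth support functions for $b_{\gamma}$ from below along asymptotes lying in $\inte M$. Fix $p\in U$. By hypothesis there exists an asymptote $\gamma_{p}:[0,\infty)\to M$ for $\gamma$ from $p$ that lies in $\inte M$. For each $t>0$ I would define
\begin{equation*}
\psi_{t}(q) := b_{\gamma}(p) + t - d_{M}(q,\gamma_{p}(t)).
\end{equation*}
Since $\gamma_{p}|_{[0,t]}$ is a normal minimal geodesic contained in $\inte M$, the point $p$ lies outside the cut locus of $\gamma_{p}(t)$, so $d_{M}(\cdot,\gamma_{p}(t))$, and hence $\psi_{t}$, is smooth in a neighborhood of $p$ contained in $\inte M$. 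Direct computation gives $\psi_{t}(p)=b_{\gamma}(p)$, while the asymptote identity $b_{\gamma}(\gamma_{p}(t))=b_{\gamma}(p)+t$ combined with the $1$-Lipschitz property of $b_{\gamma}$ yields $\psi_{t}\leq b_{\gamma}$. So $\psi_{t}$ is a smooth support function of $b_{\gamma}$ from below at $p$.

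It remains to show that for every $\epsilon>0$ we can choose $t$ with $\Delta_{f}\psi_{t}(p)\leq\epsilon$. Setting $r_{t}:=d_{M}(\cdot,\gamma_{p}(t))$ gives $\Delta_{f}\psi_{t}(p)=-\Delta_{f}r_{t}(p)$, so the task reduces to a lower bound on $\Delta_{f}r_{t}(p)$ that vanishes as $t\to\infty$. Applying Proposition \ref{prop:Bochner formula} to $r_{t}$ along $\gamma_{p}$ and running the same Cauchy--Schwarz and curvature manipulations as in the proof of Lemma \ref{lem:Laplacian comparison} (using $\ric^{N}_{f}\geq 0$ and $N\leq 1$), I would derive for $h(s):=\Delta_{f}r_{t}(\gamma_{p}(s))$ and $F(s):=e^{(f\circ\gamma_{p})(s)/(n-1)}$ the Riccati-type inequality
\begin{equation*}
(F^{2}h)'(s) \leq -\frac{(F^{2}h)(s)^{2}}{(n-1)\,F(s)^{2}}.
\end{equation*}

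Since $r_{t}$ vanishes to first order at $\gamma_{p}(t)$, one has $h(s)\to -\infty$ as $s\to t^{-}$, so $(F^{2}h)^{-1}(s)\to 0^{-}$. Rewriting the inequality as $((F^{2}h)^{-1})'(s)\geq 1/((n-1)F(s)^{2})$ and integrating from $0$ to $s\to t^{-}$ produces
\begin{equation*}
\Delta_{f}r_{t}(p)=h(0)\geq -\frac{(n-1)\,e^{-2f(p)/(n-1)}}{\int_{0}^{t}e^{-2f(\gamma_{p}(u))/(n-1)}\,du}.
\end{equation*}
The hypothesis $\sup f(M)<\infty$ now enters: the integrand is bounded below by the positive constant $e^{-2\sup f(M)/(n-1)}$, so the denominator diverges linearly in $t$. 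This yields $\Delta_{f}\psi_{t}(p)\leq C/t$ for a constant $C$ depending only on $f(p)$ and $\sup f(M)$, and choosing $t$ large enough makes this $\leq\epsilon$.

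The main delicate point is the Bochner manipulation giving the Riccati inequality above. The residual term $(N-1)(f\circ\gamma_{p})'(s)^{2}/((n-N)(n-1))$ arising from combining Cauchy--Schwarz with the identity $\ric^{\infty}_{f}=\ric^{N}_{f}+(g(\nabla f,\cdot))^{2}/(N-n)$ is non-positive precisely because $N\leq 1$, which is the sign condition that makes the argument close in the complementary weighted range (and which fails for $N\in(1,n)$). The hypothesis $\sup f(M)<\infty$ then plays the quantitative role of forcing $\int_{0}^{t}e^{-2f/(n-1)}\,du$ to diverge with $t$, thereby converting the Riccati bound into the required decay of $\Delta_{f}\psi_{t}(p)$.
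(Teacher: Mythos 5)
Your proposal is correct and reconstructs precisely the argument the paper intends: the paper itself gives no proof, simply citing Wylie's Lemma 3.4 and noting that the asymptote hypothesis ensures the supporting distance functions stay in $\inte M$, which is exactly the point you handle. The construction of $\psi_t$ from asymptotes, the Riccati inequality for $F^2 h$ obtained from the Bochner formula with Cauchy--Schwarz (using $N\leq1$ to discard the sign-definite residual), the integration using $h(s)\to-\infty$ as $s\to t^-$, and the role of $\sup f<\infty$ in making the denominator diverge all match Wylie's proof; the only thing worth noting explicitly is that in the remaining case $F^2h(0)\geq 0$ the desired lower bound on $h(0)$ holds trivially, so the argument is complete.
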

Now,
we prove Theorem \ref{thm:splitting theorem} and Corollary \ref{cor:warped splitting theorem}.
\begin{proof}[Proof of Theorem \ref{thm:splitting theorem}]
Assume $\sup f(M)<\infty$.
For $N\in (-\infty,1]$
we suppose $\ric^{N}_{f,M}\geq 0$,
and $H_{f,\bm} \geq 0$.
Suppose that
for some $x_{0} \in \bm$ we have $\tau(x_{0})=\infty$.

For the connected component $\bm_{0}$ of $\bm$ containing $x_{0}$,
put
\begin{equation*}
\Omega:=\{y\in \bm_{0} \mid \tau(y)=\infty \}.
\end{equation*}
By the continuity of $\tau$,
the set $\Omega$ is a non-empty closed subset of $\bm_{0}$.

We show the openness of $\Omega$ in $\bm_{0}$.
Fix $y_{0}\in \Omega$.
Take $l \in (0,\infty)$,
and put $p_{0}:=\gamma_{y_{0}}(l)$.
There exists an open neighborhood $U$ of $p_{0}$ in $\inte M$ contained in $D_{\bm}$.
Taking $U$ smaller,
we may assume that 
for every $q\in U$
the unique foot point on $\bm$ of $q$ belongs to $\bm_{0}$.
By Lemma \ref{lem:asymptote},
there exists $\epsilon \in (0,\infty)$ such that
for all $q\in B_{\epsilon}(p_{0})$,
all asymptotes for $\gamma_{y_{0}}$ from $q$ lie in $\inte M$.
We may assume $U\subset B_{\epsilon}(p_{0})$.
By Lemma \ref{lem:subharmonicity of busemann function},
$b_{\gamma_{y_{0}}}$ is $f$-subharmonic on $U$,
and by Lemma \ref{lem:Laplacian comparison},
$\Delta_{f} \rho_{\bm}\geq 0$ on $U$.
Hence,
$b_{\gamma_{y_{0}}}-\rho_{\bm}$ is $f$-subharmonic on $U$.
The function $b_{\gamma_{y_{0}}}-\rho_{\bm}$ takes the maximal value $0$ at $p_{0}$.
Lemma \ref{lem:maximal principle} implies that
$b_{\gamma_{y_{0}}}=\rho_{\bm}$ on $U$.
From Lemma \ref{lem:busemann function},
it follows that $\Omega$ is open in $\bm_{0}$.

Since $\bm_{0}$ is a connected component of $\bm$,
we have $\Omega=\bm_{0}$.
By Lemma \ref{lem:splitting lemma},
$\bm$ is connected
and $\cut \bm=\emptyset$.
The equality in Lemma \ref{lem:Laplacian comparison} holds on $\inte M$.
For each $x\in \bm$
we choose an orthonormal basis $\{e_{x,i}\}_{i=1}^{n-1}$ of $T_{x}\bm$.
Let $\{Y_{x,i}\}^{n-1}_{i=1}$ be the $\bm$-Jacobi fields along $\gamma_{x}$
with initial conditions $Y_{x,i}(0)=e_{x,i}$ and $Y'_{x,i}(0)=-A_{u_{x}}e_{x,i}$.
By Lemma \ref{lem:Equality in Laplacian comparison},
for all $i$
we have $Y_{x,i}=F_{x}E_{x,i}$ on $[0,\infty)$,
where $\{E_{x,i}\}$ are the parallel vector fields along $\gamma_{x}$ with initial condition $E_{x,i}(0)=e_{x,i}$.
Moreover,
if $N\in (-\infty,1)$,
then $f \circ \gamma_{x}$ is constant on $[0,\infty)$.
Define a map $\Phi:[0,\infty)\times \bm\to M$ by $\Phi(t,x):=\gamma_{x}(t)$.
For every $p\in (0,\infty)\times \bm$
the differential map $D(\Phi|_{(0,\infty)\times \bm})_{p}$ sends an orthonormal basis of $T_{p}((0,\infty)\times \bm)$ to that of $T_{\Phi(p)}M$,
and for every $x\in \{0\}\times \bm$
the map $D(\Phi|_{\{0\}\times \bm})_{x}$ sends an orthonormal basis of $T_{x}(\{0\}\times \bm)$ to that of $T_{\Phi(x)}\bm$.
Then
$\Phi$ is a Riemannian isometry with boundary from $[0,\infty)\times_{F} \bm$ to $M$.
This proves Theorem \ref{thm:splitting theorem}.
\end{proof}
\medskip

\begin{proof}[Proof of Corollary \ref{cor:warped splitting theorem}]
From Theorem \ref{thm:splitting theorem} and Proposition \ref{prop:twisted to warped},
we derive Corollary \ref{cor:warped splitting theorem}.
\end{proof}
Lemma \ref{lem:bmcompact} and the continuity of $\tau$ imply that
if $M$ is non-compact and $\bm$ is compact,
then for some $x\in \bm$ we have $\tau(x)=\infty$.
We have the following corollary of Theorem \ref{thm:splitting theorem}.
\begin{cor}\label{cor:Kasue splitting}
Let $M$ be a connected complete Riemannian manifold with boundary,
and let $f:M\to \mathbb{R}$ be a smooth function such that $\sup f(M)<\infty$.
For $N\in (-\infty,1]$
we suppose $\ric^{N}_{f,M}\geq 0$,
and $H_{f,\bm} \geq 0$.
If $M$ is non-compact and $\bm$ is compact,
then $(M,d_{M})$ is isometric to $([0,\infty)\times_{F}\bm,d_{[0,\infty)\times_{F}\bm})$.
Moreover,
if $N\in (-\infty,1)$, 
then for every $x\in \bm$
the function $f\circ \gamma_{x}$ is constant on $[0,\infty)$;
in particular,
$(M,d_{M})$ is isometric to $([0,\infty)\times \bm,d_{[0,\infty)\times \bm})$.
\end{cor}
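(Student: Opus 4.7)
The plan is to chain together two observations that the author has essentially already spelled out in the sentence preceding the corollary. The corollary is a direct specialization of Theorem \ref{thm:splitting theorem} once we verify that the hypothesis ``$\tau(x_{0})=\infty$ for some $x_{0}\in\bm$'' is automatic under compactness of $\bm$ together with non-compactness of $M$.

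First, I would invoke Lemma \ref{lem:bmcompact}, which states that when $\bm$ is compact, finiteness of $\dm$ is equivalent to compactness of $M$. Since the hypothesis gives $\bm$ compact and $M$ non-compact, the contrapositive yields $\dm=\infty$. Recalling from Subsection \ref{sec:Cut locus for the boundary} the formula $\dm=\sup_{x\in\bm}\tau(x)$, we obtain $\sup_{x\in\bm}\tau(x)=\infty$.

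Second, I would use the continuity of $\tau:\bm\to\mathbb{R}\cup\{\infty\}$ together with compactness of $\bm$. A real-valued continuous function on a compact set would attain its supremum, and the same conclusion holds here for $\tau$ valued in $\mathbb{R}\cup\{\infty\}$ with the natural topology: if $\tau$ were finite everywhere, then $\tau$ would be a continuous real-valued function on the compact space $\bm$, hence bounded, contradicting $\sup \tau=\infty$. Consequently, there exists $x_{0}\in\bm$ with $\tau(x_{0})=\infty$.

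Third, with this $x_{0}$ in hand, I would apply Theorem \ref{thm:splitting theorem} verbatim: the ambient hypotheses $\sup f(M)<\infty$, $\ric^{N}_{f,M}\geq 0$, and $H_{f,\bm}\geq 0$ are inherited directly, so the theorem yields that $(M,d_{M})$ is isometric to $([0,\infty)\times_{F}\bm,d_{[0,\infty)\times_{F}\bm})$, and in the stricter range $N\in(-\infty,1)$ the function $f\circ\gamma_{x}$ is constant along each normal geodesic, collapsing the twisted product to the metric product $[0,\infty)\times\bm$. There is essentially no obstacle here beyond correctly justifying the existence of $x_{0}$; the real work has been absorbed into Lemma \ref{lem:bmcompact} and Theorem \ref{thm:splitting theorem}.
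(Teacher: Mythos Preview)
Your proposal is correct and follows precisely the paper's own approach: the paper states just before the corollary that Lemma~\ref{lem:bmcompact} and the continuity of $\tau$ yield $\tau(x_{0})=\infty$ for some $x_{0}\in\bm$, after which Theorem~\ref{thm:splitting theorem} applies directly. Your three steps match this exactly.
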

\subsection{Weighted Ricci curvature on the boundary}\label{sec:Weighted Ricci curvature on the boundary}
For $x \in \bm$,
we recall that
$u_{x}$ denotes the unit inner normal vector on $\bm$ at $x$.

The following seems to be well-known,
especially in a submanifold setting (see e.g., Proposition 9.36 in \cite{Be}, and Lemma 5.4 in \cite{Sa1}).
\begin{lem}\label{lem:boundary Ricci curvature}
Take $x\in \bm$,
and a unit vector $u$ in $T_{x}\bm$.
Choose an orthonormal basis $\{ e_{x,i} \}_{i=1}^{n-1}$ of $T_{x}\bm$ with $e_{x,1}=u$.
Then we have
\begin{equation*}
\ric_{h}(u)=\ric_{g}(u)-K_{g}(u_{x},u)+\tr A_{S(u,u)}-\sum_{i=1}^{n-1} \Vert S(u,e_{x,i})\Vert^{2},
\end{equation*}
where $h$ is the induced Riemannian metric on $\bm$,
and $K_{g}(u_{x},u)$ is the sectional curvature at $x$ in $(M,g)$ determined by $u_{x}$ and $u$.
\end{lem}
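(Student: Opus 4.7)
My plan is to derive this from the classical Gauss equation for the hypersurface $\bm \subset M$, together with a careful accounting of traces in the chosen orthonormal frame.

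First, I would record the Gauss equation: for vectors $X,Y,Z,W \in T_{x}\bm$,
\begin{equation*}
g\bigl(R^{\bm}(X,Y)Z,W\bigr)=g\bigl(R^{g}(X,Y)Z,W\bigr)+g\bigl(S(X,W),S(Y,Z)\bigr)-g\bigl(S(X,Z),S(Y,W)\bigr),
\end{equation*}
where $R^{g}$ and $R^{\bm}$ are the curvature tensors of $(M,g)$ and $(\bm,h)$, respectively. Next, I would expand $\ric_{h}(u)=\sum_{i=1}^{n-1}g\bigl(R^{\bm}(u,e_{x,i})e_{x,i},u\bigr)$ (the $i=1$ term being zero by antisymmetry) and substitute the Gauss formula with $X=W=u$ and $Y=Z=e_{x,i}$.

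Then I would reconcile the ambient sectional-curvature sum with $\ric_{g}(u)$. Since $\{e_{x,1},\dots,e_{x,n-1},u_{x}\}$ is an orthonormal basis of $T_{x}M$ containing $u=e_{x,1}$, I have
\begin{equation*}
\ric_{g}(u)=\sum_{i=1}^{n-1}g\bigl(R^{g}(u,e_{x,i})e_{x,i},u\bigr)+K_{g}(u_{x},u),
\end{equation*}
so the first block of terms produced by Gauss is precisely $\ric_{g}(u)-K_{g}(u_{x},u)$. The last block gives $-\sum_{i=1}^{n-1}\Vert S(u,e_{x,i})\Vert^{2}$ directly. What remains is the middle block $\sum_{i=1}^{n-1}g(S(u,u),S(e_{x,i},e_{x,i}))$, which I would identify with $\tr A_{S(u,u)}$ by using the definition $g(A_{v}w,z)=g(S(w,z),v)$ applied to $v=S(u,u)$, so that $g(A_{S(u,u)}e_{x,i},e_{x,i})=g(S(e_{x,i},e_{x,i}),S(u,u))$, and summing over $i$ yields $\tr A_{S(u,u)}$.

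There is no genuine obstacle here; the whole computation is a tracing exercise on the Gauss equation, and the only thing to be careful about is keeping the sign conventions and the basis decomposition $T_{x}M=T_{x}\bm\oplus\mathbb{R}u_{x}$ consistent, so that the ambient Ricci sum is correctly split into the tangential piece (which enters Gauss) and the transverse sectional curvature $K_{g}(u_{x},u)$.
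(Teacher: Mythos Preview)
Your argument is correct and is exactly the standard derivation via the Gauss equation. The paper itself does not give a proof of this lemma; it simply records it as well known and cites Proposition~9.36 in Besse's \emph{Einstein Manifolds} and Lemma~5.4 in the author's earlier work. Your computation is precisely the one behind those references: trace the Gauss equation over the tangential orthonormal frame, split off the transverse sectional curvature $K_{g}(u_{x},u)$ from $\ric_{g}(u)$, and recognise $\sum_{i} g(S(u,u),S(e_{x,i},e_{x,i}))=\tr A_{S(u,u)}$ via the defining relation $g(A_{v}w,z)=g(S(w,z),v)$. There is nothing to add.
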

By using Lemma \ref{lem:boundary Ricci curvature},
we have the following:
\begin{lem}\label{lem:boundary weighted Ricci curvature}
Take $x\in \bm$,
and a unit vector $u$ in $T_{x}\bm$.
Choose an orthonormal basis $\{ e_{x,i} \}_{i=1}^{n-1}$ of $T_{x}\bm$ with $e_{x,1}=u$.
Then for all $N \in (-\infty,\infty)$
we have
\begin{align}\label{eq:boundary weighted Ricci curvature}
\ric^{N-1}_{f|_{\bm}}(u) & = \ric^{N}_{f}(u)+g((\nabla f)_{x},u_{x})\,g(S(u,u),u_{x})\\
                                    & - K_{g}(u_{x},u)+\tr A_{S(u,u)}-\sum_{i=1}^{n-1} \Vert S(u,e_{x,i})\Vert^{2},\notag
\end{align}
where $K_{g}(u_{x},u)$ is the sectional curvature at $x$ in $(M,g)$ determined by $u_{x}$ and $u$.
\end{lem}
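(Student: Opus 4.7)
The plan is to compute each of the three defining pieces of $\ric^{N-1}_{f|_{\bm}}(u)$ separately and compare with the corresponding pieces of $\ric^{N}_{f}(u)$, and then to invoke Lemma~\ref{lem:boundary Ricci curvature} for the purely Riemannian Ricci term.

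First, I would treat the Ricci term. By Lemma~\ref{lem:boundary Ricci curvature},
\begin{equation*}
\ric_{h}(u)=\ric_{g}(u)-K_{g}(u_{x},u)+\tr A_{S(u,u)}-\sum_{i=1}^{n-1}\Vert S(u,e_{x,i})\Vert^{2},
\end{equation*}
which already accounts for the three curvature-type correction terms in the right-hand side of (\ref{eq:boundary weighted Ricci curvature}).

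Next, I would relate the two Hessians via the Gauss formula. Extending $u$ to a vector field tangent to $\bm$ near $x$, the Gauss formula gives $\nabla^{g}_{u}u=\nabla^{h}_{u}u+S(u,u)$, where $S(u,u)$ is normal to $\bm$ and hence satisfies $S(u,u)=g(S(u,u),u_{x})\,u_{x}$. Since $u(f)=u(f|_{\bm})$ along $\bm$, applying $\Hess f(u,u)=u(u(f))-(\nabla^{g}_{u}u)(f)$ yields
\begin{equation*}
\Hess f(u,u)=\Hess (f|_{\bm})(u,u)-g(S(u,u),u_{x})\,g((\nabla f)_{x},u_{x}).
\end{equation*}
This produces exactly the extra mixed term appearing in (\ref{eq:boundary weighted Ricci curvature}).

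Finally, I would handle the gradient-squared term. Because $u\in T_{x}\bm$, the tangential component of $(\nabla f)_{x}$ along $u$ equals $h((\nabla(f|_{\bm}))_{x},u)=g((\nabla f)_{x},u)$, so the rank-one forms agree when evaluated on $u$. The crucial observation is the matching of dimensional shifts: $(N-1)-(n-1)=N-n$, so the denominators of the two gradient-squared correction terms coincide and these pieces cancel out in the comparison. Collecting the three computations and substituting into the definition of $\ric^{N-1}_{f|_{\bm}}(u)$ yields (\ref{eq:boundary weighted Ricci curvature}). I expect no serious obstacle here; the only care needed is the sign conventions in the Gauss formula and the identification of $\nabla(f|_{\bm})$ with the tangential projection of $\nabla f$, both of which are standard.
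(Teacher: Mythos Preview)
Your argument is essentially the same as the paper's: both compute the intrinsic gradient and Hessian of $f|_{\bm}$ in terms of the ambient ones, observe that $(N-1)-(n-1)=N-n$, and then invoke Lemma~\ref{lem:boundary Ricci curvature}. The only point you omit is the degenerate case $N=n$, where the gradient-squared term is not defined by the formula with denominator $N-n$ but by the special convention in (\ref{eq:def of weighted Ricci curvature}); the paper handles this separately by noting that for constant $f$ the identity reduces to Lemma~\ref{lem:boundary Ricci curvature}, and for non-constant $f$ both sides are $-\infty$.
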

\begin{proof}
First,
we assume $N\neq n$.
We see
\begin{align*}
h((\nabla(f|_{\bm}))_{x},u)&=g((\nabla f)_{x},u),\\ 
\Hess (f|_{\bm})(u,u)&=\Hess f(u,u)+g\left((\nabla f)_{x},u_{x}\right)\,g\left(S(u,u),u_{x}\right),
\end{align*}
where $h$ is the induced Riemannian metric on $\bm$.
Hence,
we have
\begin{multline*}
\ric^{N-1}_{f|_{\bm}}(u)=\ric_{h}(u)+\Hess (f|_{\bm})(u,u)-\frac{h((\nabla(f|_{\bm}))_{x},u)^{2}}{(N-1)-(n-1)}\\
                                    =\ric_{h}(u)+\Hess f(u,u)+g((\nabla f)_{x},u_{x})\,g(S(u,u),u_{x})-\frac{g((\nabla f)_{x},u)^{2}}{N-n}.
\end{multline*}
From Lemma \ref{lem:boundary Ricci curvature},
we derive (\ref{eq:boundary weighted Ricci curvature}).

Next,
we assume $N=n$.
If $f$ is constant,
then $\ric^{N-1}_{f|_{\bm}}(u)=\ric_{h}(u)$ and $\ric^{N}_{f}(u)=\ric_{g}(u)$,
and hence Lemma \ref{lem:boundary Ricci curvature} implies (\ref{eq:boundary weighted Ricci curvature}).
If $f$ is not constant,
then both the left hand side of (\ref{eq:boundary weighted Ricci curvature}) and the right hand side are equal to $-\infty$.
Therefore,
we complete the proof.
\end{proof}
From Lemma \ref{lem:boundary weighted Ricci curvature},
we derive the following:
\begin{lem}\label{lem:warped product boundary weighted Ricci curvature}
Take $x\in \bm$,
and a unit vector $u$ in $T_{x}\bm$.
If $(M,d_{M})$ is isometric to $([0,\infty)\times_{F}\bm,d_{[0,\infty)\times_{F}\bm})$,
then for all $N\in (-\infty,\infty)$
\begin{equation*}\label{eq:warped product boundary weighted Ricci curvature}
\ric^{N-1}_{f|_{\bm}}(u)=\ric^{N}_{f}(u)+\frac{\Hess f(u_{x},u_{x})}{n-1}.
\end{equation*}
\end{lem}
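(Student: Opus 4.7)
The plan is to apply Lemma \ref{lem:boundary weighted Ricci curvature} directly to the twisted product structure $M \cong [0,\infty) \times_F \bm$ and evaluate each extrinsic term on the right-hand side of (\ref{eq:boundary weighted Ricci curvature}) using only the warping factor $F_x$. Everything apart from $\ric^N_f(u)$ will reduce to the first two derivatives of $F_x$ at $0$, which in turn are expressible via $f_x'(0) = g((\nabla f)_x, u_x)$ and $f_x''(0) = \Hess f(u_x, u_x)$; the claimed formula will emerge after the terms quadratic in $g((\nabla f)_x, u_x)$ cancel against one another.

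First I would identify the second fundamental form $S$ at $x$. Parameterize $M$ by $(t,y) \in [0,\infty) \times \bm$ with unit inner normal $u_x = \partial_t$, and for $v \in T_x\bm$ consider the variation $\gamma_s(t) := (t,\alpha(s))$ of $\gamma_x$ along a curve $\alpha$ in $\bm$ with $\alpha'(0)=v$. Each $\gamma_s$ is a unit-speed geodesic, so its variation field is the $\bm$-Jacobi field $Y_v$ with $Y_v(0)=v$, and writing it via the parallel transport $E_v$ of $v$ along $\gamma_x$ gives $Y_v(t) = F_x(t)\,E_v(t)$ (the same identification already used in Lemmas \ref{lem:Equality in Laplacian comparison} and \ref{lem:half rigidity}). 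Since $F_x(0)=1$ and $E_v$ is parallel, $Y_v'(0) = F_x'(0)\,v$, and the initial condition $Y_v'(0) = -A_{u_x}v$ forces $A_{u_x} = -F_x'(0)\,\id$ on $T_x\bm$ and hence $S(v,w) = -F_x'(0)\,h(v,w)\,u_x$.

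Next I would extract the radial sectional curvature from the Jacobi equation. Plugging $Y_v = F_x E_v$ into $Y_v'' + R(Y_v, \gamma_x')\gamma_x' = 0$ and evaluating at $t=0$ yields $R(v,u_x)u_x = -F_x''(0)\,v$, so $K_g(u_x,u) = -F_x''(0)$ for any unit $u \in T_x\bm$. Differentiating $F_x(t) = e^{(f_x(t)-f(x))/(n-1)}$ with $f_x := f \circ \gamma_x$ and using that $\gamma_x$ is a geodesic, I obtain
\[
F_x'(0) = \frac{g((\nabla f)_x, u_x)}{n-1}, \qquad F_x''(0) = \frac{\Hess f(u_x, u_x)}{n-1} + \frac{g((\nabla f)_x, u_x)^2}{(n-1)^2}.
\]

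Finally I would substitute into (\ref{eq:boundary weighted Ricci curvature}). With $e_{x,1}=u$, direct evaluation gives the four correction terms $g((\nabla f)_x,u_x)\,g(S(u,u),u_x) = -g((\nabla f)_x,u_x)^2/(n-1)$, $-K_g(u_x,u) = \Hess f(u_x,u_x)/(n-1) + g((\nabla f)_x,u_x)^2/(n-1)^2$, $\tr A_{S(u,u)} = -F_x'(0)\,\tr A_{u_x} = (n-1)F_x'(0)^2 = g((\nabla f)_x,u_x)^2/(n-1)$, and $\sum_{i=1}^{n-1} \Vert S(u,e_{x,i})\Vert^2 = F_x'(0)^2\sum_i h(u,e_{x,i})^2 = g((\nabla f)_x,u_x)^2/(n-1)^2$. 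Summing, the four terms quadratic in $g((\nabla f)_x,u_x)$ cancel in pairs, leaving $\ric^N_f(u) + \Hess f(u_x,u_x)/(n-1)$, as required. The only nontrivial point is the identification $Y_v = F_x E_v$ in a twisted (not warped) product; but since the variation moves only within $\bm$ while the warping factor depends one-variably on $t$, both $A_{u_x}$ and the radial Jacobi equation depend only on $t \mapsto F_x(t)$, and no obstruction arises from the twisted-versus-warped distinction.
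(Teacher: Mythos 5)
Your proposal is correct and follows essentially the same route as the paper's proof: both invoke Lemma \ref{lem:boundary weighted Ricci curvature}, identify the $\bm$-Jacobi fields along $\gamma_x$ as $F_x E_{x,i}$ from the isometry with $[0,\infty)\times_F\bm$, read off $A_{u_x}=-F_x'(0)\,\id$ and $K_g(u_x,u)=-F_x''(0)$, and substitute to see the quadratic terms in $g((\nabla f)_x,u_x)$ cancel. The only (minor) difference is presentational: you take a moment to verify directly that the identification $Y_v=F_xE_v$ and the radial Jacobi equation survive the twisted-versus-warped distinction because the warping factor $F_x$ along the fixed fiber depends only on $t$, a point the paper leaves implicit after asserting the isometry.
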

\begin{proof}
Choose an orthonormal basis $\{e_{x,i}\}_{i=1}^{n-1}$ of $T_{x}\bm$ with $e_{x,1}=u$.
Let $\{Y_{x,i}\}^{n-1}_{i=1}$ be the $\bm$-Jacobi fields along $\gamma_{x}$
with initial conditions $Y_{x,i}(0)=e_{x,i}$ and $Y'_{x,i}(0)=-A_{u_{x}}e_{x,i}$.
Since $(M,d_{M})$ is isometric to $([0,\infty)\times_{F}\bm,d_{[0,\infty)\times_{F}\bm})$,
there exists a Riemannian isometry with boundary from $M$ to $[0,\infty)\times_{F} \bm$.
In particular,
for all $i$
we see $Y_{x,i}=F_{x}\,E_{x,i}$,
where $\{E_{x,i}\}^{n-1}_{i=1}$ are the parallel vector fields along $\gamma_{x}$ with initial condition $E_{x,i}(0)=e_{x,i}$.
Hence,
for all $i$
\begin{equation}\label{eq:warped product shape operator}
A_{u_{x}}e_{x,i}=-Y'_{x,i}(0)=-\frac{g((\nabla f)_{x},u_{x})}{n-1}e_{x,i}.
\end{equation}
By (\ref{eq:warped product shape operator}),
for all $i \neq 1$
we have $S(u,e_{x,i})=0_{x}$,
and we have
\begin{equation}\label{eq:warped product second fundamental form}
S(u,u)=-\frac{g((\nabla f)_{x},u_{x})}{n-1}u_{x},\quad \tr A_{S(u,u)}=\frac{g((\nabla f)_{x},u_{x})^{2}}{n-1}.
\end{equation}
Furthermore,
the sectional curvature $K_{g}(u_{x},u)$ at $x$ in $(M,g)$ determined by $u_{x}$ and $u$ satisfies
\begin{equation}\label{eq:warped product sectional curvature}
K_{g}(u_{x},u)=-g(Y''_{x,1}(0),u)=-\left(\frac{\Hess f(u_{x},u_{x})}{n-1}+\left( \frac{g((\nabla f)_{x},u_{x})}{n-1}  \right)^{2}   \right).
\end{equation}
By Lemma \ref{lem:boundary weighted Ricci curvature},
and by (\ref{eq:warped product second fundamental form}) and (\ref{eq:warped product sectional curvature}),
we see
\begin{align*}
\ric^{N-1}_{f|_{\bm}}(u) &= \ric^{N}_{f}(u)-\frac{g((\nabla f)_{x},u_{x})^{2}}{n-1}+\frac{\Hess f(u_{x},u_{x})}{n-1}\\
                                     &+\left( \frac{g((\nabla f)_{x},u_{x})}{n-1}  \right)^{2}+\frac{g((\nabla f)_{x},u_{x})^{2}}{n-1}-\left( \frac{g((\nabla f)_{x},u_{x})}{n-1}  \right)^{2}\\
                                     &=\ric^{N}_{f}(u)+\frac{\Hess f(u_{x},u_{x})}{n-1}.
\end{align*}
This completes the proof.
\end{proof}
\subsection{Multi-splitting}\label{sec:Multi-splitting}
Let $M_{0}$ be a connected complete Riemannian manifold (without boundary).
A normal geodesic $\gamma:\mathbb{R} \to M_{0}$ is said to be a \textit{line}
if for all $s,t\in \mathbb{R}$ 
we have $d_{M_{0}}(\gamma(s),\gamma(t))=\vert s-t\vert$.

Wylie \cite{W} has proved the following splitting theorem of Cheeger-Gromoll type (see Theorem 1.2 and Corollary 1.3 in \cite{W}):
\begin{thm}[\cite{W}]\label{thm:splitting theorem of Cheeger-Gromoll type}
Let $M_{0}$ be a connected complete Riemannian manifold,
and let $f:M_{0}\to \mathbb{R}$ be a smooth function bounded from above.
For $N\in (-\infty,1]$
we suppose $\ric^{N}_{f,M_{0}}\geq 0$.
If $M_{0}$ contains a line,
then there exists a connected complete Riemannian manifold $N_{0}$ such that
$M_{0}$ is isometric to a warped product space over $\mathbb{R}\times N_{0}$.
Moreover,
if $N\in (-\infty,1)$,
then $M_{0}$ is isometric to the standard product $\mathbb{R}\times N_{0}$.
\end{thm}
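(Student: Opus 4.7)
The plan is to follow the classical Cheeger--Gromoll strategy, adapted to the weighted setting of $N\leq 1$. Let $\gamma:\mathbb{R}\to M_{0}$ be a line, and define the two rays $\gamma^{+}(t):=\gamma(t)$ and $\gamma^{-}(t):=\gamma(-t)$ for $t\geq 0$. Let $b^{+}$ and $b^{-}$ denote the Busemann functions of $\gamma^{+}$ and $\gamma^{-}$. The triangle inequality gives $b^{+}+b^{-}\geq 0$ on $M_{0}$, with equality along $\gamma$.

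First I would establish a weighted subharmonicity of the Busemann functions. This is the boundaryless analog of Lemma \ref{lem:subharmonicity of busemann function}: since $M_{0}$ has no boundary, every asymptote for $\gamma^{\pm}$ from any $p\in M_{0}$ trivially lies in $M_{0}$, and the argument behind Lemma \ref{lem:subharmonicity of busemann function} delivers $\Delta_{f}b^{\pm}\leq 0$ in the barrier sense on all of $M_{0}$. At the technical level this is the Bochner argument of Lemma \ref{lem:Laplacian comparison} applied to the smooth distance functions $d(\cdot,\gamma^{\pm}(t))$ and passed to the limit $t\to\infty$; the point is that the weighted comparison, in which the warping by $F$ plays the crucial role for $N\leq 1$, survives the limit and yields a smooth support function of $b^{\pm}$ at each point with the required weighted Laplacian bound. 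Consequently $-(b^{+}+b^{-})$ is $f$-subharmonic in the sense of Subsection 5.1 and attains its maximum $0$ along $\gamma$. The Calabi-type maximum principle of Lemma \ref{lem:maximal principle} then forces $b^{+}+b^{-}\equiv 0$ on $M_{0}$, so $b:=b^{+}$ is smooth with $\|\nabla b\|\equiv 1$ and $\Delta_{f}b\equiv 0$.

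Next is the rigidity step. Since $\Delta_{f}b\equiv 0$ saturates the Laplacian comparison with $\kappa=\lambda=0$, the argument producing Lemma \ref{lem:Equality in Laplacian comparison} applied to the flow of $\nabla b$ shows that, along each integral curve, the orthogonal Jacobi fields take the form $F\,E$ with $E$ parallel, where $F$ is the warping factor built from $f$ exactly as in (\ref{eq:warping function}). Setting $N_{0}:=b^{-1}(0)$ with its induced Riemannian structure, and defining $\Phi(t,q):=\sigma_{q}(t)$, where $\sigma_{q}$ is the integral curve of $\nabla b$ through $q$, I obtain a Riemannian isometry from the twisted product $\mathbb{R}\times_{F}N_{0}$ onto $M_{0}$. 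Invoking Proposition \ref{prop:twisted to warped} converts this twisted product into a genuine warped product over $\mathbb{R}$, giving the first assertion. When $N<1$, the same equality step of Lemma \ref{lem:Equality in Laplacian comparison} additionally forces $f\circ\sigma_{q}$ to be constant on $\mathbb{R}$ for each $q$, so $F\equiv 1$ and the warped product degenerates to the standard product $\mathbb{R}\times N_{0}$.

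The main obstacle I expect is step one: rigorously establishing weighted subharmonicity of $b^{\pm}$ in a generalized sense compatible with Lemma \ref{lem:maximal principle}. Since $b^{\pm}$ is only a pointwise limit of distance functions, the smooth support functions have to be built from asymptotes, and one must verify both that the weighted Laplacian inequality of Lemma \ref{lem:Laplacian comparison} passes to the asymptotic limit and that the resulting inequality is strong enough in the barrier sense required by the Calabi maximum principle. Once this is secured, the remaining steps are essentially formal applications of the comparison and rigidity results already in place in Sections \ref{sec:Comparison theorems} and \ref{sec:Splitting theorems}, together with the twisted-to-warped conversion of Proposition \ref{prop:twisted to warped}.
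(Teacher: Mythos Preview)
The paper does not supply its own proof of this statement: Theorem \ref{thm:splitting theorem of Cheeger-Gromoll type} is quoted verbatim from Wylie \cite{W} (Theorem 1.2 and Corollary 1.3 there) and is used only as a black box in the proof of Corollary \ref{cor:boundary splitting}. So there is no ``paper's own proof'' to compare against; what you have sketched is in fact the outline of Wylie's original argument, and the overall strategy---Busemann functions of the two rays, weighted subharmonicity via the $N\le 1$ Laplacian comparison, Calabi's maximum principle, equality/rigidity analysis, then the twisted-to-warped conversion of Proposition \ref{prop:twisted to warped}---is correct.

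Two small points. First, you have the triangle inequality reversed: one gets $b^{+}+b^{-}\le 0$ on $M_{0}$ (not $\ge 0$), with equality along $\gamma$; hence it is $b^{+}+b^{-}$ itself, not its negative, that is $f$-subharmonic and attains its maximum $0$. Second, the lemmas you invoke (Lemma \ref{lem:Laplacian comparison}, Lemma \ref{lem:Equality in Laplacian comparison}, Lemma \ref{lem:subharmonicity of busemann function}) are all formulated in this paper for the distance function $\rho_{\partial M}$ from the \emph{boundary}; on the boundaryless manifold $M_{0}$ one needs the analogous comparison for the distance from a \emph{point}, which is what Wylie actually proves in \cite{W}. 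The boundedness of $f$ from above enters precisely there, to control the integral $\int_{0}^{t}F^{2}(s)\,ds$ in the limit $t\to\infty$ and obtain the barrier inequality for $b^{\pm}$. With these corrections your sketch matches Wylie's proof.
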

\begin{rem}\label{rem:splitting theorem of Cheeger-Gromoll type}
For manifolds of non-negative $N$-weighted Ricci curvature,
Lichnerowicz \cite{Lic} has generalized the Cheeger-Gromoll splitting theorem in the case where $N=\infty$ and $f$ is bounded.
Fang, Li and Zhang \cite{FLZ} have done in the case where $N \in [n,\infty)$,
and in the case where $N=\infty$ and $f$ is bounded above.
\end{rem}
We obtain the following corollary of Theorem \ref{thm:splitting theorem}:
\begin{cor}\label{cor:boundary splitting}
Let $M$ be an $n$-dimensional,
connected complete Riemannian manifold with boundary,
and let $f:M\to \mathbb{R}$ be a smooth function such that $\sup f(M)<\infty$.
For $N\in (-\infty,1)$
we suppose $\ric^{N}_{f,M}\geq 0$,
and $H_{f,\bm} \geq 0$.
Assume that
for some $x_{0}\in \bm$
we have $\tau(x_{0})=\infty$.
Then there exist $k\in \{0,\dots,n-1\}$ and
an $(n-1-k)$-dimensional,
connected complete Riemannian manifold $N_{0}$ containing no line such that
$(\bm,d_{\bm})$ is isometric to $(\mathbb{R}^{k}\times N_{0},d_{\mathbb{R}^{k}\times N_{0}})$.
In particular,
$(M,d_{M})$ is isometric to $([0,\infty)\times \mathbb{R}^{k}\times N_{0},d_{[0,\infty)\times \mathbb{R}^{k}\times N_{0}})$.
\end{cor}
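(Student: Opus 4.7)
The plan is to deduce Corollary \ref{cor:boundary splitting} by first applying Theorem \ref{thm:splitting theorem} to strip off the radial $[0,\infty)$-factor and then iterating the Wylie splitting theorem (Theorem \ref{thm:splitting theorem of Cheeger-Gromoll type}) on the boundary to extract a Euclidean factor $\mathbb{R}^{k}$.

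First, I would invoke Theorem \ref{thm:splitting theorem} with $N \in (-\infty,1)$ and the assumption $\tau(x_{0})=\infty$ to obtain an isometry $(M,d_{M}) \cong ([0,\infty) \times \partial M, d_{[0,\infty)\times \partial M})$ as a standard product, together with the conclusion that for every $x \in \partial M$ the function $f \circ \gamma_{x}$ is constant on $[0,\infty)$. Under this product identification, $f$ depends only on the $\partial M$-coordinate, so $g((\nabla f)_{x},u_{x})=0$ and $\Hess f(u_{x},u_{x})=0$ for every $x \in \partial M$. Moreover $\partial M$ is connected (by Lemma \ref{lem:splitting lemma}), closed in $M$ and hence complete, with $\sup_{\partial M} f|_{\partial M} \leq \sup_{M} f < \infty$.

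Next I would transfer the weighted Ricci hypothesis to $\partial M$. For any unit $u \in T_{x}\partial M$, Lemma \ref{lem:warped product boundary weighted Ricci curvature} yields
\begin{equation*}
\ric^{N-1}_{f|_{\partial M}}(u) = \ric^{N}_{f}(u) + \frac{\Hess f(u_{x},u_{x})}{n-1} = \ric^{N}_{f}(u) \geq 0,
\end{equation*}
so $(\partial M, f|_{\partial M})$ satisfies $\ric^{N-1}_{f|_{\partial M}, \partial M} \geq 0$ with $N-1 \in (-\infty,0) \subset (-\infty,1)$.

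Finally, I would iterate Theorem \ref{thm:splitting theorem of Cheeger-Gromoll type} on $\partial M$, starting from parameter $\nu := N-1$. At each stage, either the current manifold contains no line and the iteration halts, or Wylie's theorem, applied with parameter $\nu < 1$, produces a standard product splitting $\mathbb{R} \times M'$. The main obstacle is propagating the hypotheses to $M'$: one needs the weight to be constant along each newly split $\mathbb{R}$-factor so that it descends to $M'$, and one needs the weighted Ricci bound to persist with an admissible parameter. For the first point I would extract from the proof of Wylie's theorem that the weight is affine along any line under $\ric^{\nu}_{f} \geq 0$ with $\nu \leq 1$; being bounded above, it must then be constant there. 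For the second, a direct comparison of the defining formulas shows that on the standard product $\mathbb{R}\times M'$ with weight independent of $\mathbb{R}$, the denominators $\nu - (\dim(\mathbb{R}\times M')-1)$ and $(\nu-1) - (\dim M'-1)$ coincide, so $\ric^{\nu}_{f, \mathbb{R}\times M'} \geq 0$ forces $\ric^{\nu-1}_{f, M'} \geq 0$ on vectors tangent to $M'$; the parameter drops by one each step but remains in $(-\infty,1)$, and the weight on $M'$ is still bounded above. Since dimensions strictly decrease, the process terminates after $k \leq n-1$ steps at a connected complete Riemannian manifold $N_{0}$ of dimension $n-1-k$ containing no line. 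Assembling the isometries yields $(\partial M, d_{\partial M}) \cong (\mathbb{R}^{k}\times N_{0}, d_{\mathbb{R}^{k}\times N_{0}})$, and inserting this into the first-step identification gives $(M,d_{M}) \cong ([0,\infty) \times \mathbb{R}^{k} \times N_{0}, d_{[0,\infty) \times \mathbb{R}^{k} \times N_{0}})$.
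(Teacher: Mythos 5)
Your proof follows essentially the same route as the paper's: apply Theorem \ref{thm:splitting theorem} to split off the radial $[0,\infty)$-factor (obtaining a standard product because $N<1$), use Lemma \ref{lem:warped product boundary weighted Ricci curvature} together with $\Hess f(u_x,u_x)=0$ to transfer the curvature bound to $\ric^{N-1}_{f|_{\bm},\bm}\geq 0$, and then iterate Theorem \ref{thm:splitting theorem of Cheeger-Gromoll type} on the boundary. The paper compresses the iteration into the single phrase ``applying Theorem \ref{thm:splitting theorem of Cheeger-Gromoll type} inductively,'' whereas you usefully spell out why the hypotheses propagate --- that the weight becomes constant along each split-off $\mathbb{R}$-factor and that the parameter $\nu$ drops by one at each step while the defining denominator $\nu-\dim$ is unchanged --- but this is precisely the content implicit in the paper's citation of Wylie's theorem, not a different argument.
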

\begin{proof}
Due to Theorem \ref{thm:splitting theorem},
the metric space $(M,d_{M})$ is isometric to $([0,\infty)\times_{F} \bm,d_{[0,\infty)\times_{F} \bm})$.
Moreover,
for every $x\in \bm$,
the function $f\circ \gamma_{x}$ is constant on $[0,\infty)$;
in particular,
$(M,d_{M})$ is isometric to $([0,\infty)\times \bm,d_{[0,\infty)\times \bm})$.
For every $x\in \bm$,
it holds that $\Hess f(u_{x},u_{x})=0$.
By Lemma \ref{lem:warped product boundary weighted Ricci curvature},
we have $\ric^{N-1}_{f|_{\bm}}=\ric^{N}_{f}$ on the unit tangent bundle over $\bm$.
It follows that $\ric^{N-1}_{f|_{\bm},\bm} \geq 0$.
Note that
$N-1$ is smaller than $1$,
and $\sup_{x\in \bm}f(x)$ is finite.
Applying Theorem \ref{thm:splitting theorem of Cheeger-Gromoll type} to $\bm$ inductively,
we complete the proof.
\end{proof}
\subsection{Variants of the splitting theorem}\label{sec:Variants of splitting theorems}
We have already known several rigidity results studied in \cite{K2} (and \cite{CK}, \cite{I})
for manifolds with boundary whose boundaries are disconnected.
In \cite{Sa2},
the author has given generalizations of them in the usual weighted case (see Subsection 6.4 in \cite{Sa2}). 
We generalize one of them in our setting. 

For $A_{1}, A_{2}\subset M$,
we put $d_{M}(A_{1},A_{2}):=\inf_{p_{1}\in A_{1}, p_{2}\in A_{2}}\,d_{M}(p_{1},p_{2})$.

The following has been shown in \cite{K2} (see Lemma 1.6 in \cite{K2}):
\begin{lem}[\cite{K2}]\label{lem:disconnected lemma}
Suppose that
$\bm$ is disconnected.
Let $\{\bm_{i}\}_{i=1,2,\dots}$ denote the connected components of $\bm$.
Assume that $\bm_{1}$ is compact.
Put $D:=\inf_{i=2,3,\dots}\, d_{M}(\bm_{1},\bm_{i})$.
Then there exists a connected component $\bm_{2}$ of $\bm$ such that $d_{M}(\bm_{1},\bm_{2})=D$.
Furthermore,
for every $i=1,2$
there exists $x_{i}\in \bm_{i}$ such that $d_{M}(x_{1},x_{2})=D$.
The normal minimal geodesic $\gamma:[0,D]\to M$ from $x_{1}$ to $x_{2}$ is orthogonal to $\bm$ both at $x_{1}$ and at $x_{2}$,
and the restriction $\gamma|_{(0,D)}$ lies in $\inte M$.
\end{lem}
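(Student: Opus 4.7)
The plan is to extract the minimizing triple $(\bm_{2}, x_{1}, x_{2})$ by a compactness argument, connect $x_{1}$ to $x_{2}$ by a normal minimal geodesic via Hopf--Rinow, derive orthogonality from the first variation of arc length, and exclude touching of $\bm$ on the interior by a direct distance estimate.

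For the first step, observe that $D<\infty$: since $\bm$ is disconnected, there exist $p\in \bm_{1}$ and $q$ in some other component, and $(M,d_{M})$ is a geodesic space, so $D\leq d_{M}(p,q)<\infty$. Pick a sequence $(p_{k},q_{k})$ with $p_{k}\in \bm_{1}$, $q_{k}\in \bm_{i_{k}}$ for some $i_{k}\geq 2$, and $d_{M}(p_{k},q_{k})\to D$. Compactness of $\bm_{1}$ gives a subsequence with $p_{k}\to x_{1}\in \bm_{1}$; the $q_{k}$ then lie in a bounded, hence relatively compact, subset of $M$, so after passing to a further subsequence $q_{k}\to x_{2}\in M$, and since $\bm$ is closed, $x_{2}\in \bm$. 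Let $\bm_{2}$ be its component. The one delicate point is that $\bm_{2}\neq \bm_{1}$: since $\bm$ is a smooth manifold, its connected components are open, so $x_{2}\in \bm_{1}$ would force $q_{k}\in \bm_{1}$ for large $k$, contradicting $i_{k}\geq 2$. Consequently $d_{M}(x_{1},x_{2})=D=d_{M}(\bm_{1},\bm_{2})$.

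For the next two steps, apply Hopf--Rinow again to produce a normal minimal geodesic $\gamma:[0,D]\to M$ from $x_{1}$ to $x_{2}$. For orthogonality at $x_{1}$, fix $v\in T_{x_{1}}\bm_{1}$ and a smooth curve $c:(-\varepsilon,\varepsilon)\to \bm_{1}$ with $c(0)=x_{1}$ and $c'(0)=v$. The function $t\mapsto d_{M}(c(t),x_{2})\geq D$ attains its minimum at $t=0$, so the first variation of arc length yields $g(v,\gamma'(0))=0$. Hence $\gamma'(0)$ is a unit vector orthogonal to $T_{x_{1}}\bm_{1}$ and pointing into $M$, i.e., $\gamma'(0)=u_{x_{1}}$; the same reasoning at $x_{2}$ gives $\gamma'(D)=-u_{x_{2}}$.

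Finally, for the interior property, suppose $\gamma(t_{0})\in \bm$ for some $t_{0}\in (0,D)$, and let $\bm_{j}$ denote its connected component. If $j=1$, then $d_{M}(\bm_{1},x_{2})\leq d_{M}(\gamma(t_{0}),x_{2})=D-t_{0}<D$, contradicting $d_{M}(\bm_{1},\bm_{2})=D$; if $j\geq 2$, then $d_{M}(\bm_{1},\bm_{j})\leq d_{M}(x_{1},\gamma(t_{0}))=t_{0}<D$, contradicting the definition of $D$. Hence $\gamma((0,D))\subset \inte M$. The main obstacle in this scheme is the openness argument used to guarantee $\bm_{2}\neq \bm_{1}$ in the first step; the remaining steps are routine consequences of the first variation formula and the triangle inequality.
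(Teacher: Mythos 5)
The paper does not prove this lemma; it is quoted from Kasue's work (Lemma 1.6 in \cite{K2}) and used as a black box, so there is no internal proof to compare against. Your self-contained argument is correct in substance and assembles the expected ingredients: finiteness of $D$ because $(M,d_{M})$ is geodesic, extraction of a minimizing pair $(x_{1},x_{2})$ by compactness of $\bm_{1}$ and properness of $(M,d_{M})$, openness of connected components of the boundary manifold to rule out $\bm_{2}=\bm_{1}$, Hopf--Rinow, first variation, and the triangle inequality.

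One logical point should be repaired. As written, orthogonality precedes the interior step, but the first-variation computation tacitly uses that $\gamma$ is a smooth Riemannian geodesic near its endpoints, so that $\gamma'(0)$ exists, $\nabla_{\gamma'}\gamma'=0$, and an admissible variation with values in $M$ can be built. The paper's ``normal minimal geodesic'' is a priori only a metric-space geodesic, and it becomes a smooth solution of the geodesic equation precisely because it avoids $\bm$ on $(0,D)$. Your interior argument is purely metric and independent of orthogonality, so just swap the order: prove $\gamma((0,D))\subset\inte M$ first (your triangle-inequality argument works verbatim, and in fact shows $\rho_{\bm}(\gamma(t))=\min\{t,D-t\}$ for $t\in(0,D)$), conclude that $\gamma$ is a smooth geodesic on $[0,D]$, and then derive orthogonality. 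Alternatively, once the interior property is known, orthogonality follows with no variational argument from the foot-point fact recalled in Subsection 2.3: for small $\delta\in(0,D)$ one has $\rho_{\bm}(\gamma(\delta))=\delta$ with $x_{1}$ a foot point of $\gamma(\delta)\in\inte M$, hence $\gamma|_{[0,\delta]}=\gamma_{x_{1}}|_{[0,\delta]}$ and $\gamma'(0)=u_{x_{1}}$, and similarly at $x_{2}$.
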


We prove the following splitting theorem:
\begin{thm}\label{thm:disconnected splitting}
Let $M$ be a connected complete Riemannian manifold with boundary,
and let $f:M\to \mathbb{R}$ be a smooth function.
Suppose that
$\bm$ is disconnected.
Let $\{\bm_{i}\}_{i=1,2,\dots}$ denote the connected components of $\bm$.
Suppose that
$\bm_{1}$ is compact,
and put $D:=\inf_{i=2,3,\dots}\, d_{M}(\bm_{1},\bm_{i})$.
Let $\kappa \in \mathbb{R}$ and $\lambda \in \mathbb{R}$ satisfy the subharmonic-condition.
For $N \in (-\infty,1]$
we suppose $\ric^{N}_{f,M}\geq \kappa$,
and $H_{f,\bm} \geq \lambda$.
Then $(M,d_{M})$ is isometric to $([0,D]\times_{F}\bm_{1},d_{[0,D]\times_{F} \bm_{1}})$.
Moreover,
if $N\in (-\infty,1)$,
then for every $x\in \bm_{1}$
the function $f\circ \gamma_{x}$ is constant on $[0,D]$;
in particular,
$(M,d_{M})$ is isometric to $([0,D] \times \bm_{1},d_{[0,D] \times \bm_{1}})$.
\end{thm}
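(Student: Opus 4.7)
My approach mirrors the proof of Theorem \ref{thm:splitting theorem}, with two distance functions playing the role of the Busemann function and $\rho_{\bm}$. Define $\rho_{1}(p) := d_{M}(p, \bm_{1})$ and $\rho_{2}(p) := d_{M}(p, \bm \setminus \bm_{1})$. By Lemma \ref{lem:disconnected lemma}, fix a component $\bm_{2}$ of $\bm \setminus \bm_{1}$ realizing $d_{M}(\bm_{1}, \bm_{2}) = D$, together with a normal minimizing geodesic $\gamma:[0,D] \to M$ from some $x_{1} \in \bm_{1}$ to some $x_{2} \in \bm_{2}$, perpendicular to $\bm$ at both endpoints and with $\gamma((0,D)) \subset \inte M$. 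The triangle inequality combined with the definition of $D$ forces $\rho_{1} + \rho_{2} \geq D$ on $M$, while $\rho_{1}(\gamma(t)) = t$ and $\rho_{2}(\gamma(t)) = D - t$ show equality along $\gamma$.

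\textbf{$f$-subharmonicity of $\varphi := D - \rho_{1} - \rho_{2}$.} At each $p \in \inte M$, pick foot points $y_{1} \in \bm_{1}$ and $y_{2} \in \bm \setminus \bm_{1}$ realizing $\rho_{1}(p)$ and $\rho_{2}(p)$, and choose small smooth hypersurfaces-with-boundary $\Sigma_{i}$ inside the respective boundary components around $y_{i}$, so small that $p$ is not a cut point of $\Sigma_{i}$. Then $q \mapsto d_{M}(q, \Sigma_{i})$ is smooth near $p$, dominates $\rho_{i}$, and agrees with $\rho_{i}$ at $p$. The Bochner-type derivation used in the proof of Lemma \ref{lem:Laplacian comparison} applies verbatim to $d_{M}(\cdot, \Sigma_{i})$, which inherits the curvature bound $\ric^{N}_{f,M} \geq \kappa$ and the mean curvature bound $H_{f, \Sigma_{i}} \geq \lambda$, so the subharmonic-condition yields $\Delta_{f} d_{M}(\cdot, \Sigma_{i})(p) \geq 0$. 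Thus $\psi(q) := D - d_{M}(q, \Sigma_{1}) - d_{M}(q, \Sigma_{2})$ is a smooth support function of $\varphi$ from below at $p$ with $\Delta_{f} \psi(p) \leq 0$, so $\varphi$ is $f$-subharmonic on the connected domain $\inte M$.

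\textbf{Rigidity via the maximum principle.} Since $\varphi \leq 0$ attains its maximum value $0$ on $\gamma((0,D)) \subset \inte M$, Lemma \ref{lem:maximal principle} forces $\varphi \equiv 0$ on $\inte M$, hence $\rho_{1} + \rho_{2} \equiv D$ on $M$. Applying Lemma \ref{lem:Laplacian comparison} separately to $\rho_{1}$ and to $\rho_{2}$ gives $\Delta_{f} \rho_{1}, \Delta_{f} \rho_{2} \geq 0$ at smooth points, while the identity $\rho_{1} + \rho_{2} \equiv D$ forces their sum to vanish. Both Laplacians therefore vanish identically, and in particular $\Delta_{f} \rho_{\bm}(\gamma_{y}(t)) = 0$ for every $y \in \bm_{1}$ and every $t \in (0, \tau(y))$. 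Lemma \ref{lem:Equality in Basic comparison} then delivers $\kappa = 0 = \lambda$, the Jacobi-field identity $Y_{y,i} = F_{y} E_{y,i}$ on $[0, \tau(y)]$, and, when $N \in (-\infty,1)$, the constancy of $f \circ \gamma_{y}$ on that interval.

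\textbf{Assembly of the splitting and main obstacle.} Define $\Phi:[0,D] \times \bm_{1} \to M$ by $\Phi(t,y) := \gamma_{y}(t)$ on $[0, D/2]$ and on $[D/2, D]$ by the symmetric construction from the $(\bm \setminus \bm_{1})$-side, once one produces a diffeomorphism $\sigma: \bm_{1} \to \bm \setminus \bm_{1}$ with $\gamma_{y}(D) = \sigma(y)$ and $\gamma_{y}$ hitting $\bm$ orthogonally there. The Jacobi-field identity on each half makes $\Phi$ a Riemannian isometry with boundary from $[0,D] \times_{F} \bm_{1}$ onto $M$; when $N < 1$, constancy of $f \circ \gamma_{y}$ gives $F_{y} \equiv 1$ and the ordinary product $[0,D] \times \bm_{1}$. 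I expect the main technical obstacle to be the careful handling of the common cut locus $\cut \bm = \Phi(\{D/2\} \times \bm_{1})$: one must show uniformly in $y \in \bm_{1}$ that $\tau(y) = D/2$ (so the Jacobi-field identity propagates across $[0, D/2]$ without encountering focal or conjugate points strictly before the midlevel), and that the two one-sided parametrizations glue smoothly along this hypersurface into a single globally defined isometry $\Phi$; this is where the interplay between the maximum principle conclusion $\rho_{1} + \rho_{2} \equiv D$ and continuity of $\tau$ must be used most carefully.
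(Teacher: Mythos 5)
Your high-level strategy matches the paper's: use a maximum principle to force $\rho_{\bm_1}+\rho_{\bm_2}\equiv D$ on $\inte M$, and then feed the resulting vanishing of the weighted Laplacians into the equality analysis of the Laplacian comparison. Two remarks on the details.

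First, a minor point of method. Instead of proving directly that $D-\rho_1-\rho_2$ is $f$-subharmonic on all of $\inte M$ (which, as you set it up with the shrunken boundary pieces $\Sigma_i$, needs extra care at focal points of $\bm_i$: if $p$ is a focal point of $\bm_i$ along the minimizing geodesic, then $d_M(\cdot,\Sigma_i)$ is not smooth at $p$ no matter how small $\Sigma_i$ is, so one must also push $\Sigma_i$ inward by $\epsilon$ and absorb the resulting error into the $\epsilon$ of the definition of $f$-subharmonicity), the paper sidesteps this by showing the set $\Omega=\{\rho_{\bm_1}+\rho_{\bm_2}=D\}$ is open and closed in $\inte M$. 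Openness is checked only near points $p\in\Omega$: there both $\rho_{\bm_1}$ and $\rho_{\bm_2}$ are actually smooth, because $p$ sits strictly in the interior of a geodesic that minimizes from $\bm_1$ to $\bm_2$. This avoids any barrier/focal-point bookkeeping.

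Second, and this is the genuine gap: the ``assembly'' paragraph is not a proof, and more importantly the obstacle you identify there is a false one. Once $\rho_{\bm_1}+\rho_{\bm_2}\equiv D$, for \emph{every} $y\in\bm_1$ the geodesic $\gamma_y$ minimizes the distance to $\bm_1$ on the entire interval $[0,D]$ and meets $\bm_2$ orthogonally at $t=D$ (this follows by the same triangle-inequality and uniqueness-of-geodesic argument you used for openness, applied along $\gamma_y$ itself; an open-closed argument in $t$ makes it clean). Consequently the cut time for $\rho_{\bm_1}$ along $\gamma_y$ is $D$, not $D/2$: the set $\cut\bm$ at level $D/2$ is the cut locus of the \emph{full} boundary, but the quantity you should be comparing with is $\rho_{\bm_1}$, whose cut locus is never met on $(0,D)$. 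You should therefore apply the equality lemma (Lemma \ref{lem:Equality in Basic comparison}) to $\rho_{\bm_1}$ along $\gamma_y$ on $(0,D)$, where $\Delta_f\rho_{\bm_1}=\Delta_f\rho_{\bm_2}=0$; this gives $\kappa=\lambda=0$ and $Y_{y,i}=F_y E_{y,i}$ on all of $[0,D]$ in one stroke, after which $\Phi(t,y)=\gamma_y(t)$ is a Riemannian isometry with boundary from $[0,D]\times_F\bm_1$ to $M$ with no gluing across $\{D/2\}\times\bm_1$ required. Your plan to parametrize each half separately and match them across the midlevel would also need you to construct the reflecting diffeomorphism $\sigma$ and verify smooth matching of the two twisted-product metrics there --- work that the $\rho_{\bm_1}$-based route simply does not encounter.
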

\begin{proof}
By Lemma \ref{lem:disconnected lemma},
there exists a connected component $\bm_{2}$ of $\bm$ such that
$d_{M}(\bm_{1},\bm_{2})=D$.
For each $i=1,2$,
let $\rho_{\bm_{i}}:M\to \mathbb{R}$ be the distance function from $\bm_{i}$ defined as $\rho_{\bm_{i}}(p):=d_{M}(p,\bm_{i})$.
Put
\begin{equation*}
\Omega:=\{p\in \inte M \mid \rho_{\bm_{1}}(p)+\rho_{\bm_{2}}(p)=D\}.
\end{equation*}
Lemma \ref{lem:disconnected lemma} implies that
$\Omega$ is a non-empty closed subset of $\inte M$.

We show that
$\Omega$ is open in $\inte M$.
Take $p\in \Omega$.
For each $i=1,2$,
there exists a foot point $x_{p,i}\in \bm_{i}$ on $\bm_{i}$ of $p$ such that
$d_{M}(p,x_{p,i})=\rho_{\bm_{i}}(p)$.
From the triangle inequality,
we derive $d_{M}(x_{p,1},x_{p,2})=D$.
The normal minimal geodesic $\gamma:[0,D]\to M$ from $x_{p,1}$ to $x_{p,2}$ is orthogonal to $\bm$ at $x_{p,1}$ and at $x_{p,2}$.
Furthermore,
$\gamma|_{(0,D)}$ lies in $\inte M$ and passes through $p$.
There exists an open neighborhood $U$ of $p$ such that 
$U$ is contained in $\inte M$
and $\rho_{\bm_{i}}$ is smooth on $U$.
By using Lemma \ref{lem:Basic comparison},
we see $\Delta_{f}\, \rho_{\bm_{i}} \geq 0$ on $U$;
in particular,
$-(\rho_{\bm_{1}}+\rho_{\bm_{2}})$ is $f$-subharmonic on $U$.
Lemma \ref{lem:maximal principle} implies that 
$\Omega$ is open in $\inte M$.

Since $\inte M$ is connected,
we have $\inte M=\Omega$.
For each $x\in \bm_{1}$,
choose an orthonormal basis $\{e_{x,i}\}_{i=1}^{n-1}$ of $T_{x}\bm$.
Let $\{Y_{x,i}\}^{n-1}_{i=1}$ be the $\bm$-Jacobi fields along $\gamma_{x}$
with initial conditions $Y_{x,i}(0)=e_{x,i}$ and $Y_{x,i}'(0)=-A_{u_{x}}e_{x,i}$.
Using Lemma \ref{lem:Equality in Basic comparison},
for all $i$
we see $Y_{x,i}=F_{x}\,E_{x,i}$ on $[0,D]$,
where $\{E_{x,i}\}^{n-1}_{i=1}$ are the parallel vector fields along $\gamma_{x}$ with initial condition $E_{x,i}(0)=e_{x,i}$.
Moreover,
if $N\in (-\infty,1)$,
then for every $x\in \bm_{1}$
the function $f\circ \gamma_{x}$ is constant on $[0,D]$.
Define a map $\Phi:[0,D]\times \bm_{1}\to M$ by $\Phi(t,x):=\gamma_{x}(t)$.
The map $\Phi$ is a Riemannian isometry with boundary from $[0,D]\times_{F} \bm_{1}$ to $M$.
\end{proof}
\begin{rem}
Wylie \cite{W} has proved the same result as Theorem \ref{thm:disconnected splitting} when $\kappa=0$ and $\lambda=0$ (see Theorem 5.1 in \cite{W}).
\end{rem}

\section{Eigenvalue rigidity}\label{sec:Eigenvalue rigidity}
Let $M$ be an $n$-dimensional,
connected complete Riemannian manifold with boundary with Riemannian metric $g$,
and let $f:M\to \mathbb{R}$ be a smooth function.
\subsection{Lower bounds}
In \cite{Sa2},
the author has shown the following Picone type inequality proved by Allegretto and Huang \cite{AH} in the Euclidean case (see Theorem 1.1 in \cite{AH}, and Lemma 7.1 in \cite{Sa2}).
\begin{lem}[\cite{Sa2}]\label{lem:Picone identity}
Let $\varphi$ and $\psi$ be functions on $M$ that are smooth on a domain $U$ in $M$,
and satisfy $\varphi>0$ and $\psi \geq 0$ on $U$.
Then for all $p\in (1,\infty)$ we have the following inequality on $U$:
\begin{equation}\label{eq:Picone identity}
\Vert \nabla \psi \Vert^{p}\geq \Vert \nabla \varphi \Vert^{p-2} g\left(\nabla \left(  \psi^{p}\,  \varphi^{1-p} \right),\nabla \varphi \right).
\end{equation}
Moreover,
if the equality in $(\ref{eq:Picone identity})$ holds on $U$,
then for some constant $c\neq 0$
we have $\psi=c\, \varphi$ on $U$.
\end{lem}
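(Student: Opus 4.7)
The plan is to reduce the pointwise inequality to a combination of the Cauchy--Schwarz inequality and Young's inequality, both applied to quantities built from $\nabla\psi$ and $\nabla\varphi$.

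First, I would differentiate by the product and chain rules to get
\begin{equation*}
\nabla\!\left(\psi^{p}\varphi^{1-p}\right)
= p\,\psi^{p-1}\varphi^{1-p}\,\nabla\psi
+ (1-p)\,\psi^{p}\varphi^{-p}\,\nabla\varphi,
\end{equation*}
and then compute the right-hand side of \eqref{eq:Picone identity} as
\begin{equation*}
\|\nabla\varphi\|^{p-2}g\!\left(\nabla(\psi^{p}\varphi^{1-p}),\nabla\varphi\right)
= p\,\psi^{p-1}\varphi^{1-p}\,\|\nabla\varphi\|^{p-2}\,g(\nabla\psi,\nabla\varphi)
- (p-1)\,\psi^{p}\varphi^{-p}\,\|\nabla\varphi\|^{p}.
\end{equation*}
So the inequality to prove reduces to
\begin{equation*}
\|\nabla\psi\|^{p} + (p-1)\,\psi^{p}\varphi^{-p}\,\|\nabla\varphi\|^{p}
\;\geq\; p\,\psi^{p-1}\varphi^{1-p}\,\|\nabla\varphi\|^{p-2}\,g(\nabla\psi,\nabla\varphi).
\end{equation*}

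Next, I would estimate the right-hand side using the Cauchy--Schwarz inequality $g(\nabla\psi,\nabla\varphi)\leq \|\nabla\psi\|\|\nabla\varphi\|$, together with the sign conditions $\psi\geq 0$, $\varphi>0$, to get an upper bound of the form $p\,a\,b^{p-1}$, where $a:=\|\nabla\psi\|$ and $b:=(\psi/\varphi)\|\nabla\varphi\|\geq 0$. Then I would apply Young's inequality in the form
\begin{equation*}
p\,a\,b^{p-1} \;\leq\; a^{p} + (p-1)\,b^{p},
\end{equation*}
which is exactly $\|\nabla\psi\|^{p} + (p-1)\,\psi^{p}\varphi^{-p}\,\|\nabla\varphi\|^{p}$. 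This chain of inequalities yields \eqref{eq:Picone identity}. The minor technicality is the case $1<p<2$, where $\|\nabla\varphi\|^{p-2}$ must be interpreted via the usual convention $\|\nabla\varphi\|^{p-2}\nabla\varphi:=0$ at critical points of $\varphi$; this does not affect the argument.

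For the equality case, I would track equality in both of the two inequalities used. Equality in Cauchy--Schwarz forces $\nabla\psi$ and $\nabla\varphi$ to be proportional with a non-negative factor, and equality in Young's inequality forces $a=b$, i.e.\ $\|\nabla\psi\|=(\psi/\varphi)\|\nabla\varphi\|$. Combining these gives $\nabla\psi=(\psi/\varphi)\nabla\varphi$, which is equivalent to $\nabla(\psi/\varphi)=0$, at every point of $U$ where $\psi>0$ (at points where $\psi=0$, the right-hand side of \eqref{eq:Picone identity} vanishes and equality forces $\nabla\psi=0$ there as well). Since $U$ is a connected domain and $\psi/\varphi$ is continuous with vanishing gradient on the open set $\{\psi>0\}$, a continuity/connectedness argument shows $\psi/\varphi$ is globally constant on $U$; the constant is non-zero precisely when $\psi$ is not identically zero, giving $\psi=c\,\varphi$ for some $c\neq 0$. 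The main (mild) obstacle is this equality analysis at points where $\psi$ vanishes; everything else is an algebraic manipulation of the two standard inequalities.
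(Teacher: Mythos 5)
Your proposal is correct, and it is exactly the Allegretto--Huang argument (expand $\nabla(\psi^{p}\varphi^{1-p})$, then apply Cauchy--Schwarz to the cross term and Young's inequality $pab^{p-1}\leq a^{p}+(p-1)b^{p}$), which is what the cited source \cite{Sa2} adapts from \cite{AH}; the present paper states the lemma without reproducing the proof. Your equality analysis is also sound: equality in Cauchy--Schwarz and Young forces $\varphi\nabla\psi-\psi\nabla\varphi=0$ on $\{\psi>0\}$, the closedness-in-$U$ of any component of $\{\psi>0\}$ follows from continuity of $\psi/\varphi$, and the case $\psi\equiv 0$ is the only one where $c=0$, consistent with the lemma's implicit assumption that $\psi$ is not identically zero.
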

We now give a proof of the inequality $(\ref{eq:eigenvalue rigidity})$ in Theorem \ref{thm:eigenvalue rigidity}.
\begin{prop}\label{prop:inequality in eigenvalue rigidity}
Suppose that
$M$ is compact.
Let $p\in (1,\infty)$.
Let $\kappa \in \mathbb{R}$ and $\lambda \in \mathbb{R}$ satisfy the subharmonic-condition.
For $N \in (-\infty,1]$
we suppose $\ric^{N}_{f,M}\geq \kappa$,
and $H_{f,\bm} \geq \lambda$.
For $D\in (0,\infty)$
we assume $\dm \leq D$.
Then we have $(\ref{eq:eigenvalue rigidity})$.
\end{prop}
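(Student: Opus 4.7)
The plan is to test the Rayleigh quotient against a function built by pulling back a one-dimensional eigenfunction via $\rho_{\bm}$, and then to combine the Picone-type inequality of Lemma \ref{lem:Picone identity} with the distributional Laplacian comparison of Proposition \ref{prop:global p-Laplacian comparison}. First I would pick a positive eigenfunction $\varphi:[0,D]\to\mathbb{R}$ realizing $\mu_{p,D}$; by standard one-dimensional theory it can be taken smooth and monotone increasing on $[0,D]$, with $\varphi(0)=0$ and $\varphi'(D)=0$, and satisfying $((\varphi')^{p-1})'=-\mu_{p,D}\,\varphi^{p-1}$ on $[0,D]$. I would then extend $\varphi$ to a smooth monotone increasing function on $[0,\infty)$; since $\dm\leq D$ forces $\rho_{\bm}(M)\subseteq [0,D]$, the extension has no effect on the argument. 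Setting $\Phi:=\varphi\circ\rho_{\bm}$, the resulting function is non-negative on $M$, vanishes precisely on $\bm$, and is smooth and strictly positive on $\inte M\setminus \cut\bm$.

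The core estimate will run as follows. For a non-negative smooth test function $\psi$ with compact support in $\inte M$ (by density, and by replacing $\psi$ with $|\psi|$, this suffices to compute $\mu_{f,1,p}(M)$), there is $\epsilon>0$ with $\rho_{\bm}\geq\epsilon$ on $\mathrm{supp}\,\psi$, hence $\Phi\geq \varphi(\epsilon)>0$ there; thus $\eta:=\psi^{p}\,\Phi^{1-p}$ is an admissible non-negative smooth test function compactly supported in $\inte M$. Applying Lemma \ref{lem:Picone identity} pointwise on $\inte M\setminus\cut\bm$ to $\psi$ and $\Phi$, then integrating and using Proposition \ref{prop:global p-Laplacian comparison} with test function $\eta$, together with the identity $\eta\,\Phi^{p-1}=\psi^{p}$, I expect to arrive at
\[
\int_M \Vert\nabla\psi\Vert^{p}\,dm_f \;\geq\; \int_M \Vert\nabla\Phi\Vert^{p-2}\,g(\nabla\eta,\nabla\Phi)\,dm_f \;\geq\; \mu_{p,D}\int_M \psi^{p}\,dm_f,
\]
which gives $R_{f,p}(\psi)\geq\mu_{p,D}$, and hence $(\ref{eq:eigenvalue rigidity})$ after taking the infimum over $\psi$.

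The main obstacle will be the lack of smoothness of $\rho_{\bm}$, and hence of $\Phi$, across $\cut\bm$, which prevents a direct pointwise application of Picone on all of $M$. The remedy, modeled on the proof of Proposition \ref{prop:global p-Laplacian comparison}, is to invoke Lemma \ref{lem:avoiding the cut locus} to exhaust $\inte M$ by closed sets $\Omega_k$ whose outward unit normals $\nu_k$ on $\partial\Omega_k\setminus\bm$ satisfy $g(\nu_k,\nabla\rho_{\bm})\geq 0$. On each $\Omega_k$, Picone and the Green formula are applicable; the boundary contributions along $\partial\Omega_k\setminus\bm$ carry a favourable sign since $g(\nu_k,\nabla\Phi)=\varphi'(\rho_{\bm})\,g(\nu_k,\nabla\rho_{\bm})\geq 0$, while those along $\partial\Omega_k\cap\bm=\bm$ vanish by the compact support of $\eta$ in $\inte M$. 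Passing to the limit $k\to\infty$ would then yield the global integral inequality and close the argument.
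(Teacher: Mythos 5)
Your proposal matches the paper's own proof in its essential strategy: form $\Phi=\varphi\circ\rho_{\bm}$ from a positive, increasing one-dimensional eigenfunction for $\mu_{p,D}$, test the Rayleigh quotient against a non-negative compactly supported $\psi$, apply Lemma \ref{lem:Picone identity} pointwise away from $\cut\bm$, and combine with Proposition \ref{prop:global p-Laplacian comparison} (through the $\Omega_{k}$-exhaustion of Lemma \ref{lem:avoiding the cut locus}) to reach $R_{f,p}(\psi)\geq\mu_{p,D}$. Your explicit re-running of the exhaustion argument to justify plugging the merely Lipschitz $\eta=\psi^{p}\Phi^{1-p}$ (not smooth, despite your initial assertion) into that distributional inequality is a worthwhile clarification of a step the paper's write-up glosses over; otherwise the two arguments coincide.
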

\begin{proof}
Let $\varphi_{p,D}:[0,D]\to \mathbb{R}$ be a function satisfying (\ref{eq:model space eigenvalue problem}) for $\mu=\mu_{p,D}$.
We may assume $\varphi_{p,D}|_{(0,D]}>0$.
Then we see
$\varphi'_{p,D}|_{[0,D)}>0$.
Put $\Phi:=\varphi_{p,D}\circ \rho_{\bm}$.
Take a non-negative,
non-zero smooth function $\psi$ on $M$ whose support is compact and contained in $\inte M$.
By Lemma \ref{lem:Picone identity}
\begin{equation}\label{eq:eigenvalue rigidity4}
\Vert \nabla \psi \Vert^{p}\geq \Vert \nabla \Phi \Vert^{p-2} g\left(\nabla \left(  \psi^{p}\,  \Phi^{1-p} \right),\nabla \Phi \right)
\end{equation}
on $\inte M \setminus \cut \bm$.
From (\ref{eq:eigenvalue rigidity4}) and Proposition \ref{prop:global p-Laplacian comparison},
we derive
\begin{multline*}
\int_{M}\, \Vert \nabla \psi \Vert^{p}\, d\,m_{f}\geq \int_{M}\,\Vert \nabla \Phi \Vert^{p-2} g\left(\nabla \left(  \psi^{p}\,  \Phi^{1-p} \right),\nabla \Phi \right) \,d\,m_{f}\\
                                                                      \geq \int_{M}\,  \left( \psi^{p}\,  \Phi^{1-p} \right)  \left(-\left( \left(\varphi'_{p,D} \right)^{p-1} \right)' \circ \rho_{\bm}\right)   \,d\,m_{f}=\mu_{p,D}\,\int_{M}\, \psi^{p} \, d\,m_{f}.
\end{multline*}
It follows that $R_{f,p}(\psi)\geq \mu_{p,D}$.
Hence,
we arrive at (\ref{eq:eigenvalue rigidity}).
\end{proof}
\begin{rem}\label{rem:the equality case in eigenvalue rigidity}
In Proposition \ref{prop:inequality in eigenvalue rigidity},
we assume that
there exists a non-negative,
non-zero smooth function $\psi:M\to \mathbb{R}$ whose support is compact and contained in $\inte M$ such that $R_{f,p}(\psi)=\mu_{p,D}$.
Then the equality in (\ref{eq:eigenvalue rigidity4}) holds on $\inte M \setminus \cut \bm$,
and hence
for some constant $c\neq 0$
we have $\psi=c\, \Phi$ on $M$ (see Lemma \ref{lem:Picone identity}).
The equality case in (\ref{eq:global p-Laplacian comparison}) also happens (see Remark \ref{rem:Equality case in global p-Laplacian comparison}).
\end{rem}
\subsection{Equality cases}
For a positive number $D\in (0,\infty)$,
we put $S_{D}(\bm):=B_{D}(\bm)\setminus U_{D}(\bm)$.

For the proof of Theorem \ref{thm:eigenvalue rigidity},
we show the following lemma
concerning the $F$-model space introduced in Subsection \ref{subsec:Eigenvalue rigidity}:
\begin{lem}\label{lem:conclude rigidity}
Suppose that
$M$ is compact.
let $\kappa \in \mathbb{R}$ and $\lambda \in \mathbb{R}$ satisfy the subharmonic-condition.
For $N \in (-\infty,1]$
we suppose $\ric^{N}_{f,M}\geq \kappa$,
and $H_{f,\bm} \geq \lambda$.
Assume that
for some $D\in (0,\infty)$
we have $\cut \bm=S_{D}(\bm)$.
For each $x\in \bm$,
choose an orthonormal basis $\{e_{x,i}\}_{i=1}^{n-1}$ of $T_{x}\bm$,
and let $\{Y_{x,i}\}^{n-1}_{i=1}$ be the $\bm$-Jacobi fields along $\gamma_{x}$
with initial conditions $Y_{x,i}(0)=e_{x,i}$ and $Y_{x,i}'(0)=-A_{u_{x}}e_{x,i}$.
Assume further that
for all $i$
we have $Y_{x,i}=F_{x}\, E_{x,i}$ on $[0,D]$,
where $\{E_{x,i}\}^{n-1}_{i=1}$ are the parallel vector fields along $\gamma_{x}$ with initial condition $E_{x,i}(0)=e_{x,i}$.
Then $(M,d_{M})$ is an $F$-model space.
\end{lem}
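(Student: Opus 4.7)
The approach is to construct from the Jacobi-field data a local isometry of a ``doubled'' twisted product onto $M$, and then read off from its behaviour at the cut locus which of the two alternatives in the definition of an $F$-model space holds. The hypothesis $\cut \bm = S_{D}(\bm)$ forces $\tau \equiv D$ on $\bm$ and hence $\dm = D$. I define
\begin{equation*}
\Phi_{0} : [0,D] \times \bm \to M, \qquad \Phi_{0}(t,x) := \gamma_{x}(t).
\end{equation*}
Using $Y_{x,i} = F_{x}\, E_{x,i}$, the differential $(d\Phi_{0})_{(t,x)}$ sends $\partial_{t}$ to the unit vector $\gamma_{x}'(t)$ and each $e_{x,i}$ to $F_{x}(t)\, E_{x,i}(t)$. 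Since the $E_{x,i}$ are parallel orthonormal and perpendicular to $\gamma_{x}'$, this is a linear isometry from the tangent space of $[0,D] \times_{F} \bm$ at $(t,x)$ onto $T_{\Phi_{0}(t,x)} M$. Hence $\Phi_{0}$ is a local isometry with boundary (in particular smooth up to $t=D$), and it is surjective onto $M$ by the compactness of $M$ together with $\dm = D$.

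Next I extend each normal geodesic $\gamma_{x}$ past $t=D$. Compactness of $M$ ensures $\gamma_{x}$ meets $\bm$ again at a first positive time $T(x) > D$, and I set $\sigma(x) := \gamma_{x}(T(x)) \in \bm$. The heart of the argument is to prove $T(x) = 2D$ for every $x$ and that $\sigma$ supplies the involutive isometry required in the definition of an $F$-model space. Because no $Y_{x,i}$ vanishes on $[0,D]$, the point $\gamma_{x}(D)$ is not a conjugate point of $\bm$ along $\gamma_{x}$, so it must admit at least one additional foot point $x' \neq x$ on $\bm$. Since $\Phi_{0}$ is a local diffeomorphism at both $(D,x)$ and $(D,x')$ and $M$ is smooth, the two half-neighbourhoods abutting $\gamma_{x}(D)$ must fit together smoothly on opposite sides of the local hypersurface $\cut \bm$; this forces the matching condition $\gamma_{x}'(D) = -\gamma_{x'}'(D)$ and simultaneously rules out any third foot point. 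The concatenation of $\gamma_{x}|_{[0,D]}$ with the reverse of $\gamma_{x'}|_{[0,D]}$ is therefore a single smooth normal geodesic of length $2D$ from $x$ to $x'$, so $T(x) = 2D$ and $\sigma(x) = x'$. Involutivity and the absence of fixed points of $\sigma$ are immediate; the compatibility of $\sigma$ with the twisted metric follows from uniqueness of Jacobi-field solutions along $\gamma_{x}$, extended to $[0,2D]$ by the reflection identity $F_{x}(t) = F_{\sigma(x)}(2D - t)$.

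With $\sigma$ and the extended warping function in hand, I enlarge $\Phi_{0}$ to $\Phi : [0,2D] \times \bm \to M$ by the same formula $\Phi(t,x) := \gamma_{x}(t)$, which is still a surjective local isometry from $[0,2D] \times_{F} \bm$ onto $M$ and which satisfies $\Phi \circ \hat{\sigma} = \Phi$ for $\hat{\sigma}(t,x) := (2D - t, \sigma(x))$. Two sub-cases now arise. If some connected component $\bm_{1}$ of $\bm$ satisfies $\sigma(\bm_{1}) \neq \bm_{1}$, then $\sigma$ swaps $\bm_{1}$ with another component of $\bm$, and $\Phi$ restricted to $[0,2D] \times \bm_{1}$ is a global Riemannian isometry with boundary from $[0,2D] \times_{F} \bm_{1}$ onto $M$, yielding case (1) of the definition. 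Otherwise $\sigma$ preserves every component of $\bm$, is fixed-point-free, and $\Phi$ descends to a global isometry $([0,2D] \times_{F} \bm)/G_{\sigma} \to M$, yielding case (2).

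The main obstacle is the cut-locus analysis carried out in the middle paragraph: showing that each $y \in \cut \bm$ has exactly two foot points and that the two minimizers meet at $y$ in opposite directions. This is where the non-degeneracy of the Jacobi fields at $t=D$, the local isometry property of $\Phi_{0}$ up to the boundary $\{D\} \times \bm$, and the ambient smoothness of $M$ all have to be combined to prevent any branching of sheets at $\cut \bm$ and to force smooth geodesic continuation across it.
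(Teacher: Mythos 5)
Your proposal follows the same overall strategy as the paper: use the Jacobi-field hypothesis to show that the exponential map $\Phi(t,x)=\gamma_x(t)$ is a local isometry from the twisted product, use the non-conjugacy at $t=D$ to see that each cut point has exactly two foot points, build the involution $\sigma$ from these, and finally read off a global isometry. The two treatments differ in packaging. The paper first disposes of the case of disconnected $\bm$ by invoking Theorem~\ref{thm:disconnected splitting}, which does the work cleanly at the cost of citing an earlier theorem; only then does it handle connected $\bm$ by constructing $\sigma$ and defining $\Phi$ piecewise on $[0,D)$ and $(D,2D]$. You instead build $\sigma$ and $\Phi$ on $[0,2D]$ from the start by analytically continuing each $\gamma_x$ past the cut time, and you recover the dichotomy between case~(1) and case~(2) of the $F$-model definition afterward from whether $\sigma$ permutes boundary components nontrivially. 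This is a genuine, slightly more self-contained reorganization; what the paper's route buys is not having to re-justify the two-sheeted structure in the disconnected case. Both treatments leave the same local points at a similar level of rigor (smoothness of $S_D(\bm)$, orthogonality of $\gamma_x'(D)$ to it, and the resulting ``exactly two foot points'' count), so your middle paragraph, while somewhat informal, is in line with the paper's own economy there.

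One small caution. You state the reflection identity $F_x(t)=F_{\sigma(x)}(2D-t)$ as if it were automatic, but a direct computation shows it is equivalent to $f(x)=f(\sigma(x))$ for all $x\in\bm$, which you have not established. Fortunately you do not actually need it: since $\Phi$ is a surjective local isometry from $[0,2D]\times_F\bm$ and satisfies $\Phi\circ\hat\sigma=\Phi$, the identity $\hat\sigma^\ast(\Phi^\ast g)=\Phi^\ast g$ already shows that $\hat\sigma$ is an isometry of the twisted product, and the quotient construction then goes through. So the reflection identity should be dropped (or derived from $\hat\sigma$ being an isometry), rather than asserted. With that adjustment, your argument matches the paper's and reaches the same conclusion.
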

\begin{proof}
First,
we assume that
$\bm$ is disconnected.
Let $\{\bm_{i}\}_{i=1,2,\dots}$ be the connected components of $\bm$.
Put $D_{1}:=\inf_{i=2,3,\dots}\, d_{M}(\bm_{1},\bm_{i})$.
By Theorem \ref{thm:disconnected splitting},
there exists a connected component $\bm_{1}$ such that
$(M,d_{M})$ is isometric to $([0,D_{1}] \times_{F} \bm_{1},d_{[0,D_{1}] \times_{F} \bm_{1}})$.
From $\cut \bm=S_{D}(\bm)$,
it follows that
$\dm=D$ and $D_{1}=2D$,
and hence $(M,d_{M})$ is an $F$-model space.

Next,
we assume that
$\bm$ is connected.
By $\cut \bm=S_{D}(\bm)$,
we have $\dm=D$.
From the property of Jacobi fields,
$S_{D}(\bm)$ is a smooth hypersurface in $M$,
and every point in $S_{D}(\bm)$ has two distinct foot points on $\bm$.
For every $x\in \bm$,
the vector $\gamma'_{x}(D)$ is orthogonal to $S_{D}(\bm)$;
and hence
the number of foot points on $\bm$ of $\gamma_{x}(D)$ is equal to two.
Now,
we define an involutive isometry $\sigma:\bm\to \bm$ without fixed points by $\sigma(x):=y$,
where $y$ is the foot point on $\bm$ of $\gamma_{x}(D)$
that is different from $x$.
We also define a map $\Phi:[0,2D]\times \bm \to M$ as follows:
If $t \in [0,D)$,
then $\Phi(t,x):=\gamma_{x}(t)$;
if $t \in (D,2D]$,
then $\Phi(t,x):=\gamma_{\sigma(x)}(2D-t)$.
We see that
$\Phi$ is surjective and continuous.
For all $x \in \bm$ and $t \in [0,2D]$
we have $\Phi(t,x)=\Phi(2D-t,\sigma(x))$.
Since for all $x\in \bm$ and $i$
we have $Y_{x,i}=F_{x}\, E_{x,i}$ on $[0,D]$,
the map $\Phi|_{[0,D)}$ gives an isometry between $(U_{D}(\bm),g)$ and the twisted product space $[0,D)\times_{F} \bm$.
Therefore,
$M$ is isometric to the quotient space $([0,2D]\times_{F} \bm)/G_{\sigma}$,
where $G_{\sigma}$ is the isometry group on $[0,2D]\times_{F} \bm$ of the identity and the involute isometry $\hat{\sigma}$ defined by $\hat{\sigma}(t,x):=(2D-t,\sigma(x))$.
This implies that
$(M,d_{M})$ is an $F$-model space.
We complete the proof.
\end{proof}
Furthermore,
we recall the following fact concerning eigenfunctions for the $(f,p)$-Laplacian (see e.g., \cite{Sa2}, \cite{T}):
\begin{prop}\label{prop:eigenfunction}
Suppose that
$M$ is compact.
Let $p\in (1,\infty)$.
Then there exists a non-negative,
non-zero function $\Psi$ in $W^{1,p}_{0}(M,m_{f})$ such that $R_{f,p}(\Psi)=\mu_{f,1,p}(M)$.
Moreover,
for some $\alpha \in (0,1)$
the function $\Psi$ is $C^{1,\alpha}$-H\"older continuous on $M$.
\end{prop}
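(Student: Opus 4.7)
The plan is to obtain $\Psi$ as a minimizer of the Rayleigh quotient $R_{f,p}$ by the direct method of the calculus of variations, and then invoke standard quasilinear regularity theory for the $(f,p)$-Laplacian to upgrade its regularity. Since $M$ is compact and $f$ is smooth, the weight $e^{-f}$ is bounded between two positive constants, so the weighted measure $m_f$ is comparable to $\vol_g$ and the weighted Sobolev space $W^{1,p}_0(M,m_f)$ coincides (with equivalent norms) with the usual Sobolev space $W^{1,p}_0(M)$ of functions vanishing on $\bm$.

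First I would choose a minimizing sequence $\{\varphi_k\}\subset W^{1,p}_0(M,m_f)\setminus\{0\}$ with $R_{f,p}(\varphi_k)\to \mu_{f,1,p}(M)$, normalized so that $\int_M |\varphi_k|^p\, dm_f=1$. Replacing $\varphi_k$ by $|\varphi_k|$ does not change the value of $R_{f,p}$ (since $\Vert \nabla|\varphi_k|\Vert=\Vert\nabla \varphi_k\Vert$ almost everywhere), so I may assume $\varphi_k\geq 0$. The normalization together with the bound on $R_{f,p}(\varphi_k)$ yields a uniform bound on $\Vert \varphi_k\Vert_{W^{1,p}(M,m_f)}$. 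Compactness of $M$ and reflexivity of $W^{1,p}_0$ for $p\in (1,\infty)$ give, after passing to a subsequence, weak convergence $\varphi_k \rightharpoonup \Psi$ in $W^{1,p}_0(M,m_f)$, and the Rellich--Kondrachov theorem gives strong convergence $\varphi_k\to \Psi$ in $L^p(M,m_f)$. Hence $\Psi\geq 0$, $\int_M |\Psi|^p\,dm_f=1$, and by weak lower semicontinuity of the gradient norm $\int_M \Vert \nabla \Psi\Vert^p\, dm_f \leq \liminf_k \int_M \Vert \nabla \varphi_k\Vert^p\, dm_f$, so $R_{f,p}(\Psi)\leq \mu_{f,1,p}(M)$; by definition of the infimum, equality holds.

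Next I would derive the Euler--Lagrange equation: the minimizer $\Psi$ satisfies
\begin{equation*}
\Delta_{f,p}\Psi = \mu_{f,1,p}(M)\,|\Psi|^{p-2}\Psi
\end{equation*}
in the distributional sense on $\inte M$. Written out using the definition of $\Delta_{f,p}$, this is a weak formulation of a quasilinear degenerate elliptic equation in divergence form with smooth coefficients on the compact manifold $M$. Standard regularity theory for solutions of such equations (the results of Tolksdorf and DiBenedetto together with boundary regularity of Lieberman) then yields that $\Psi\in C^{1,\alpha}(M)$ for some $\alpha\in (0,1)$, where we use that $\bm$ is smooth and $\Psi$ satisfies the Dirichlet condition $\Psi|_{\bm}=0$. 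This is exactly the ingredient cited from \cite{T} (and the treatment in \cite{Sa2}).

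The routine part is the existence via the direct method; the main difficulty lies in the regularity, because the equation is degenerate (the leading coefficient $\Vert \nabla \Psi\Vert^{p-2}$ vanishes where $\nabla \Psi=0$), so one cannot simply invoke linear Schauder estimates. I would therefore rely on the quoted $C^{1,\alpha}$ regularity theory for the $p$-Laplacian up to a smooth boundary, applied in local charts to our weighted equation (whose principal part differs from the ordinary $p$-Laplacian only through the smooth, strictly positive weight $e^{-f}$, which does not affect the structure conditions needed by that theory). This completes the proposed proof.
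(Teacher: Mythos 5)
Your proof is correct and is precisely the standard argument that the paper implicitly relies on by citing \cite{Sa2} and \cite{T} rather than giving a proof: the paper states this proposition as a recalled fact, and the direct method (normalize, pass to absolute values, extract a weak $W^{1,p}_0$ limit via reflexivity and Rellich--Kondrachov, use weak lower semicontinuity of the $p$-energy, and note that strong $L^p$ convergence preserves the normalization so the limit is non-zero) together with Tolksdorf/DiBenedetto interior regularity and Lieberman boundary regularity for the degenerate quasilinear Euler--Lagrange equation is exactly what those references supply. The observation that compactness of $M$ and smoothness of $f$ make $e^{-f}$ a two-sided bounded positive weight, so that $W^{1,p}_0(M,m_f)$ is the usual $W^{1,p}_0(M)$ and the weight does not disturb the structure conditions of the regularity theory, is the right point to make and is the only adaptation needed.
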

Now,
we prove Theorem \ref{thm:eigenvalue rigidity} and Corollary \ref{cor:warped eigenvalue rigidity}.
\begin{proof}[Proof of Theorem \ref{thm:eigenvalue rigidity}]
Suppose that
$M$ is compact.
Let $p\in (1,\infty)$,
and let $\kappa \in \mathbb{R}$ and $\lambda \in \mathbb{R}$ satisfy the subharmonic-condition.
For $N \in (-\infty,1]$
we suppose $\ric^{N}_{f,M}\geq \kappa$,
and $H_{f,\bm} \geq \lambda$.
For $D\in (0,\infty)$,
we assume $\dm \leq D$.
From Proposition \ref{prop:inequality in eigenvalue rigidity},
we derive (\ref{eq:eigenvalue rigidity}).

We assume that
the equality in (\ref{eq:eigenvalue rigidity}) holds.
Proposition \ref{prop:eigenfunction} implies that
there exists a non-negative,
non-zero function $\Psi$ in $W^{1,p}_{0}(M,m_{f})$ such that
$R_{f,p}(\Psi)=\mu_{p,D}$ and $\Psi$ is $C^{1,\alpha}$-H\"older continuous on $M$.
Let $\varphi_{p,D}:[0,D]\to \mathbb{R}$ be a function satisfying (\ref{eq:model space eigenvalue problem}) for $\mu=\mu_{p,D}$,
and let $\varphi_{p,D}|_{(0,D]}>0$.
Putting $\Phi:=\varphi_{p,D}\circ \rho_{\bm}$,
we see that
$\Phi$ coincides with a constant multiplication of $\Psi$ on $M$ (see Remark \ref{rem:the equality case in eigenvalue rigidity});
in particular,
$\Phi$ is also $C^{1,\alpha}$-H\"older continuous on $M$.

For each $x\in \bm$,
we choose an orthonormal basis $\{e_{x,i}\}_{i=1}^{n-1}$ of $T_{x}\bm$.
Let $\{Y_{x,i}\}^{n-1}_{i=1}$ be the $\bm$-Jacobi fields along $\gamma_{x}$
with initial conditions $Y_{x,i}(0)=e_{x,i}$ and $Y_{x,i}'(0)=-A_{u_{x}}e_{x,i}$.
Then for all $i$
we have $Y_{x,i}=F_{x}\, E_{x,i}$ on $[0,\tau(x)]$,
where $\{E_{x,i}\}^{n-1}_{i=1}$ are the parallel vector fields along $\gamma_{x}$ with initial condition $E_{x,i}(0)=e_{x,i}$;
moreover,
if $N \in (-\infty,1)$,
then $f \circ \gamma_{x}$ is constant on $[0,\tau(x)]$ (see Remark \ref{rem:the equality case in eigenvalue rigidity}).

We show $\cut \bm=S_{D}(\bm)$.
From $\dm \leq D$
we deduce $S_{D}(\bm)\subset \cut \bm$.
We prove the opposite.
Take $p_{0}\in \cut \bm$.
By the property of Jacobi fields,
$\rho_{\bm}$ is not differentiable at $p_{0}$.
From the regularity of $\Phi$,
it follows that $\varphi'_{p,D}(\rho_{\bm}(p_{0}))=0$;
in particular,
$\rho_{\bm}(p_{0})=D$.
Hence,
$\cut \bm=S_{D}(\bm)$.
This implies
$\dm=D$.
By Lemma \ref{lem:conclude rigidity},
we complete the proof of Theorem \ref{thm:eigenvalue rigidity}.
\end{proof}
\medskip

\begin{proof}[Proof of Corollary \ref{cor:warped eigenvalue rigidity}]
By Theorem \ref{thm:eigenvalue rigidity} and Proposition \ref{prop:twisted to warped},
we complete the proof of Corollary \ref{cor:warped eigenvalue rigidity}.
\end{proof}
By Theorem \ref{thm:eigenvalue rigidity} and $\mu_{2,D}= \pi^{2}(2D)^{-2}$,
we have the following:
\begin{cor}\label{cor:Li-Yau}
Let $M$ be a connected complete Riemannian manifold with boundary,
and let $f:M\to \mathbb{R}$ be a smooth function.
Suppose that
$M$ is compact.
Let $\kappa \in \mathbb{R}$ and $\lambda \in \mathbb{R}$ satisfy the subharmonic-condition.
For $N\in (-\infty,1]$
we suppose $\ric^{N}_{f,M}\geq \kappa$,
and suppose $H_{f,\bm} \geq \lambda$.
For $D\in (0,\infty)$
we assume $\dm \leq D$.
Then
\begin{equation}\label{eq:Li-Yau estimate}
\mu_{f,1,2}(M)\geq \frac{{\pi}^{2}}{4D^{2}}.
\end{equation}
If the equality in $(\ref{eq:Li-Yau estimate})$ holds,
then $\dm=D$,
and the metic space $(M,d_{M})$ is an $F$-model space.
Moreover,
if $N\in (-\infty,1)$,
then for every $x\in \bm$
the function $f\circ \gamma_{x}$ is constant on $[0,D]$;
in particular,
$(M,d_{M})$ is an equational model space.
\end{cor}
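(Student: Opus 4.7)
The plan is to deduce this corollary directly from Theorem \ref{thm:eigenvalue rigidity} by specializing to $p=2$. Indeed, the hypotheses of Corollary \ref{cor:Li-Yau} are precisely the hypotheses of Theorem \ref{thm:eigenvalue rigidity} with $p=2$, so it remains to translate the conclusions through the identity $\mu_{2,D}=\pi^{2}(2D)^{-2}$ mentioned just after the definition of $\mu_{p,D}$.

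First, I would verify the identity $\mu_{2,D}=\pi^{2}(2D)^{-2}$ by solving the one-dimensional eigenvalue problem $\varphi''+\mu\varphi=0$ with $\varphi(0)=0$ and $\varphi'(D)=0$: the smallest positive eigenvalue corresponds to $\varphi(t)=\sin\bigl(\tfrac{\pi t}{2D}\bigr)$, giving $\mu=\pi^{2}/(4D^{2})$. With this in hand, the inequality $(\ref{eq:eigenvalue rigidity})$ of Theorem \ref{thm:eigenvalue rigidity} becomes exactly $(\ref{eq:Li-Yau estimate})$.

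Next, for the rigidity statement, I would invoke the equality case of Theorem \ref{thm:eigenvalue rigidity}. Under the assumption that equality holds in $(\ref{eq:Li-Yau estimate})$, equality must hold in $(\ref{eq:eigenvalue rigidity})$ for $p=2$; hence the theorem yields $\dm=D$ together with the statement that $(M,d_{M})$ is an $F$-model space. Furthermore, the moreover-clause of Theorem \ref{thm:eigenvalue rigidity} covers the case $N\in(-\infty,1)$: for every $x\in\bm$ the function $f\circ\gamma_{x}$ is constant on $[0,D]$, so by the definition of $F_{x}$ in $(\ref{eq:warping function})$ we have $F_{x}\equiv 1$ on $[0,\dm]=[0,D]$, which is precisely the condition making the $F$-model space an equational model space.

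There is no real obstacle here: the corollary is a direct specialization. The only minor check is the formula for $\mu_{2,D}$, which is elementary. All geometric content was already extracted in Theorem \ref{thm:eigenvalue rigidity}, so the proof is essentially a one-line deduction once the value of $\mu_{2,D}$ is recorded.
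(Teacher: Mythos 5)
Your proposal is correct and matches the paper's own approach: the paper derives Corollary \ref{cor:Li-Yau} directly from Theorem \ref{thm:eigenvalue rigidity} with $p=2$ using the identity $\mu_{2,D}=\pi^{2}(2D)^{-2}$. Your verification of that identity via the ODE and your unpacking of the equational-model-space conclusion are both accurate and fill in exactly the elementary details the paper leaves implicit.
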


\subsection{Explicit lower bounds}\label{sec:Eigenvalue estimates}
Let $\Omega$ be a relatively compact domain in $M$ such that
$\partial \Omega$ is a smooth hypersurface in $M$ satisfying $\partial \Omega \cap \bm=\emptyset$.
For the canonical Riemannian volume measure $\vol_{\partial \Omega}$ on $\partial \Omega$,
let $m_{f,\partial \Omega}:=e^{-f|_{\partial \Omega}}\, \vol_{\partial \Omega}$.
Put
\begin{equation}\label{eq:diameter of Omega}
\delta_{1}(\Omega):=\inf_{p\in \Omega}\, \rho_{\bm}(p),\quad \delta_{2}(\Omega):=\sup_{p\in \Omega} \,\rho_{\bm}(p).
\end{equation}

In the usual weighted case,
the author \cite{Sa2} has proved the following volume estimate (see Propositions 8.1 and 8.2 in \cite{Sa2}):
\begin{prop}[\cite{Sa2}]\label{prop:usual Kasue volume estimate}
Let $M$ be an $n$-dimensional,
connected complete Riemannian manifold with boundary,
and let $f:M\to \mathbb{R}$ be a smooth function.
For $N \in [n,\infty]$
we suppose $\ric^{N}_{f,M}\geq 0$,
and $H_{f,\bm} \geq 0$.
Let $\Omega$ be a relatively compact domain in $M$ such that
$\partial \Omega$ is a smooth hypersurface in $M$ satisfying $\partial \Omega \cap \bm=\emptyset$.
Then
\begin{equation*}\label{eq:usual Kasue volume estimate}
m_{f}(\Omega) \leq m_{f,\partial \Omega}\, (\partial \Omega) \,\left(\delta_{2}(\Omega)-\delta_{1}(\Omega)   \right),
\end{equation*}
where $\delta_{1}(\Omega)$ and $\delta_{2}(\Omega)$ are the values defined as $(\ref{eq:diameter of Omega})$.
\end{prop}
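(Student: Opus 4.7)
The plan is to combine subharmonicity of $\rho_{\bm}$ in the usual weighted case with the weighted divergence theorem and the co-area formula. Under the present hypotheses, the analogue of Lemma \ref{lem:Laplacian comparison} for $N \in [n,\infty]$ proved in \cite{Sa2} yields $\Delta_{f}\rho_{\bm} \geq 0$ on $\inte M \setminus \cut \bm$. Defining the $m_{f}$-divergence of a vector field $X$ by $\Div_{m_{f}}X := \Div X - g(\nabla f,X)$, one has $\Div_{m_{f}}(\nabla \rho_{\bm}) = -\Delta_{f}\rho_{\bm} \leq 0$ wherever $\rho_{\bm}$ is smooth, and integration by parts against $m_{f}$ reduces to the usual divergence theorem with the boundary measure $m_{f,\partial \Omega'}=e^{-f|_{\partial \Omega'}}\vol_{\partial \Omega'}$.

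Fix $t \in (\delta_{1}(\Omega),\delta_{2}(\Omega))$ and consider the slab $\Omega_{t}:=\Omega \cap \rho_{\bm}^{-1}([0,t))$. Adapting Lemma \ref{lem:avoiding the cut locus} to $\Omega_{t}$, I exhaust $\Omega_{t}\setminus \cut \bm$ by closed subdomains $\Omega_{t,k}$ whose boundaries are smooth hypersurfaces in $M$ and whose outer unit normals $\nu_{k}$ on the ``spurious'' faces $\partial \Omega_{t,k}\setminus (\partial \Omega \cup \rho_{\bm}^{-1}(t))$ satisfy $g(\nu_{k},\nabla \rho_{\bm})\geq 0$. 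Applying the weighted divergence theorem to $\nabla \rho_{\bm}$ on each $\Omega_{t,k}$, using $\Div_{m_{f}}(\nabla \rho_{\bm})\leq 0$ to control the bulk term, discarding the non-negative spurious contributions, and passing $k\to \infty$, I obtain
\begin{equation*}
\int_{\Omega \cap \rho_{\bm}^{-1}(t)} 1 \, dm_{f,\rho_{\bm}^{-1}(t)} \leq -\int_{\partial \Omega \cap \{\rho_{\bm}<t\}} g(\nabla \rho_{\bm},\nu_{\Omega})\, dm_{f,\partial \Omega},
\end{equation*}
where $\nu_{\Omega}$ is the outer unit normal of $\Omega$. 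Since $\Vert \nabla \rho_{\bm}\Vert=\Vert \nu_{\Omega}\Vert=1$, the Cauchy--Schwarz inequality bounds the right-hand side by $m_{f,\partial \Omega}(\partial \Omega)$.

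Finally, since $\rho_{\bm}$ is $1$-Lipschitz with $\Vert \nabla \rho_{\bm}\Vert=1$ almost everywhere on $M$ (the set $\cut \bm$ being a null set), the weighted co-area formula gives
\begin{equation*}
m_{f}(\Omega)=\int_{\delta_{1}(\Omega)}^{\delta_{2}(\Omega)} m_{f,\rho_{\bm}^{-1}(t)}(\Omega \cap \rho_{\bm}^{-1}(t))\, dt \leq m_{f,\partial \Omega}(\partial \Omega)\,(\delta_{2}(\Omega)-\delta_{1}(\Omega)),
\end{equation*}
which is the desired estimate. The main obstacle I anticipate is the careful handling of $\cut \bm$ in the divergence-theorem step: one must adapt the approximation of Lemma \ref{lem:avoiding the cut locus} to a domain $\Omega_{t}$ whose boundary is disjoint from $\bm$, and verify that the spurious boundary contributions produced during the exhaustion indeed carry a favorable sign against $\nabla \rho_{\bm}$ so that they can be absorbed into the inequality.
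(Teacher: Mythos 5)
Your proof takes the natural route — weighted subharmonicity of $\rho_{\bm}$ from the $N\in[n,\infty]$ Laplacian comparison in \cite{Sa2}, the weighted divergence theorem on level-set slabs after circumventing $\cut\bm$, and the weighted coarea formula — and the paper's own remark (that Proposition \ref{prop:Kasue volume estimate} follows by substituting Lemma \ref{lem:Laplacian comparison} for the usual-weighted comparison in the proof of Proposition \ref{prop:usual Kasue volume estimate}) indicates this is essentially the argument of \cite{Sa2}, following Kasue \cite{K3}. The sign bookkeeping is correct: with the paper's convention $\Delta\varphi=-\mathrm{trace}\,\Hess\varphi$, one has $\Div_{m_f}\nabla\rho_{\bm}=-\Delta_f\rho_{\bm}\leq 0$, the spurious faces inherited from Lemma \ref{lem:avoiding the cut locus} contribute $g(\nu_k,\nabla\rho_{\bm})\geq 0$ and can be dropped, and Cauchy--Schwarz then bounds the flux through $\partial\Omega$. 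For the exhaustion it is cleanest to apply Lemma \ref{lem:avoiding the cut locus} to $\Omega$ itself and then set $\Omega_{t,k}:=\Omega_k\cap\{\rho_{\bm}\leq t\}$; the corners that appear are harmless for the divergence theorem, and since $\rho_{\bm}$ has $\Vert\nabla\rho_{\bm}\Vert\equiv 1$ off $\cut\bm$ there are no critical values, so the levels are regular (a Sard argument handles the transversality with $\partial\Omega_k$ for a.e.\ $t$).

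One step you should make explicit is the treatment of $\bm$ itself. The hypothesis $\partial\Omega\cap\bm=\emptyset$ does not force $\bar\Omega\subset\inte M$: since $\Omega$ is open and $\partial\Omega$ misses $\bm$, the set $\Omega\cap\bm$ is open and closed in $\bm$, so $\Omega$ may contain entire connected components of $\bm$ (in which case $\delta_{1}(\Omega)=0$). Then $\partial\Omega_{t}$ acquires a face on $\Omega\cap\bm$ whose outer normal is $-u_{x}$, so $g(\nabla\rho_{\bm},-u_{x})=-1$ there, and the divergence identity yields the slab inequality with the extra term $m_{f,\bm}(\Omega\cap\bm)$ on the right-hand side, which does not absorb into $m_{f,\partial\Omega}(\partial\Omega)$ by Cauchy--Schwarz alone. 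Either record the (standard) additional hypothesis $\bar\Omega\subset\inte M$ — which suffices for the downstream eigenvalue estimates, where the relevant superlevel sets have closure compactly contained in $\inte M$ — or supply a separate argument for the boundary-containing case.
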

Kasue \cite{K3} has obtained Proposition \ref{prop:usual Kasue volume estimate} in the unweighted case.

In our setting,
we have the following volume estimate:
\begin{prop}\label{prop:Kasue volume estimate}
Let $M$ be a connected complete Riemannian manifold with boundary,
and let $f:M\to \mathbb{R}$ be a smooth function.
Let $\kappa \in \mathbb{R}$ and $\lambda \in \mathbb{R}$ satisfy the subharmonic-condition.
For $N \in (-\infty,1]$
we suppose $\ric^{N}_{f,M}\geq \kappa$,
and $H_{f,\bm} \geq \lambda$.
Let $\Omega$ be a relatively compact domain in $M$ such that
$\partial \Omega$ is a smooth hypersurface in $M$ satisfying $\partial \Omega \cap \bm=\emptyset$.
Then
\begin{equation*}\label{eq:Kasue volume estimate}
m_{f}(\Omega) \leq m_{f,\partial \Omega}\, (\partial \Omega) \,\left(\delta_{2}(\Omega)-\delta_{1}(\Omega)   \right),
\end{equation*}
where $\delta_{1}(\Omega)$ and $\delta_{2}(\Omega)$ are the values defined as $(\ref{eq:diameter of Omega})$.
\end{prop}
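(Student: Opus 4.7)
The plan is to mimic the proof of Proposition \ref{prop:usual Kasue volume estimate} (that is, Proposition 8.1 in \cite{Sa2}), substituting our Laplacian comparison Lemma \ref{lem:Laplacian comparison} for its $N\in[n,\infty]$ counterpart. Under the subharmonic-condition, Lemma \ref{lem:Laplacian comparison} guarantees $\Delta_{f}\rho_{\bm}\geq 0$ pointwise on $\inte M\setminus \cut\bm$, which is the only curvature input needed. The idea is then to write $m_{f}(\Omega)$ as the weighted integral of $\|\nabla \rho_{\bm}\|^{2}=1$, integrate by parts against $\psi:=\delta_{2}(\Omega)-\rho_{\bm}$, and exploit the subharmonicity of $\rho_{\bm}$ together with the sign-controlled exhaustion from Lemma \ref{lem:avoiding the cut locus} to reduce everything to a boundary integral over $\partial \Omega$.

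Concretely, I would first apply Lemma \ref{lem:avoiding the cut locus} to $\Omega$ to obtain a monotone sequence $\{\Omega_{k}\}_{k\in \mathbb{N}}$ of closed subsets of $\bar{\Omega}$ with piecewise smooth boundaries whose union equals $\bar{\Omega}\setminus \cut\bm$. Set $\psi:=\delta_{2}(\Omega)-\rho_{\bm}$, so that $0\leq \psi\leq \delta_{2}(\Omega)-\delta_{1}(\Omega)$ on $\bar{\Omega}$ and $\nabla \psi=-\nabla \rho_{\bm}$ where defined. Applying the weighted Green formula on $\Omega_{k}$ to $\psi$ and $\varphi:=\rho_{\bm}$,
\begin{equation*}
\int_{\Omega_{k}} g(\nabla \psi,\nabla \rho_{\bm})\, dm_{f}
=\int_{\Omega_{k}} \psi\,\Delta_{f}\rho_{\bm}\, dm_{f}
+\int_{\partial \Omega_{k}} \psi\, g(\nu_{k},\nabla \rho_{\bm})\, dm_{f,\partial \Omega_{k}},
\end{equation*}
the left hand side equals $-m_{f}(\Omega_{k})$ because $\|\nabla \rho_{\bm}\|=1$ almost everywhere on $\Omega_{k}$.

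Rearranging and using $\psi\geq 0$ together with $\Delta_{f}\rho_{\bm}\geq 0$, the interior term may be discarded, leaving
\begin{equation*}
m_{f}(\Omega_{k})\leq -\int_{\partial \Omega_{k}} \psi\, g(\nu_{k},\nabla \rho_{\bm})\, dm_{f,\partial \Omega_{k}}.
\end{equation*}
Split $\partial \Omega_{k}=(\partial \Omega_{k}\cap \partial \Omega)\cup (\partial \Omega_{k}\setminus \partial \Omega)$. On the latter piece Lemma \ref{lem:avoiding the cut locus} gives $g(\nu_{k},\nabla \rho_{\bm})\geq 0$ together with $\psi\geq 0$, so the corresponding integrand is nonnegative and may be dropped from the right hand side. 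On the former piece, $\nu_{k}$ agrees almost everywhere with the outer normal $\nu$ of $\Omega$, hence $|g(\nu_{k},\nabla \rho_{\bm})|\leq 1$ by Cauchy--Schwarz, and $\psi\leq \delta_{2}(\Omega)-\delta_{1}(\Omega)$ produces
\begin{equation*}
m_{f}(\Omega_{k})\leq (\delta_{2}(\Omega)-\delta_{1}(\Omega))\, m_{f,\partial \Omega}(\partial \Omega).
\end{equation*}
Since $\cut\bm$ is a null set of $(M,m_{f})$ and $\bar{\Omega}\setminus \cut\bm=\bigcup_{k}\Omega_{k}$ is an increasing union, monotone convergence yields $m_{f}(\Omega_{k})\to m_{f}(\Omega)$, completing the proof.

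The main obstacle I anticipate is the sign bookkeeping when splitting $\partial \Omega_{k}$ and invoking Lemma \ref{lem:avoiding the cut locus}, namely ensuring that the interior, cut-locus-facing piece of $\partial \Omega_{k}$ contributes with the correct sign to be dropped, and that the Green-formula measure on $\partial \Omega_{k}\cap \partial \Omega$ is compatible with $m_{f,\partial \Omega}$. Once the Laplacian comparison $\Delta_{f}\rho_{\bm}\geq 0$ from Lemma \ref{lem:Laplacian comparison} is available, it plays the same role that the $N\in[n,\infty]$ Bochner-based comparison played in \cite{Sa2}, and the rest is a standard weighted divergence-theorem computation.
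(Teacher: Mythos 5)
Your proposal is correct and follows the same route that the paper (and its reference \cite{Sa2}) indicates: replace the $N\in[n,\infty]$ Laplacian comparison with Lemma \ref{lem:Laplacian comparison} to get $\Delta_{f}\rho_{\bm}\geq 0$ away from $\cut\bm$, then run the weighted Green formula on the exhaustion $\{\Omega_{k}\}$ from Lemma \ref{lem:avoiding the cut locus}, discard the nonnegative interior term and the nonnegative contribution of the cut-locus-facing piece of $\partial\Omega_{k}$, bound the $\partial\Omega$-piece by $(\delta_{2}(\Omega)-\delta_{1}(\Omega))\,m_{f,\partial\Omega}(\partial\Omega)$, and let $k\to\infty$. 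The choice $\psi=\delta_{2}(\Omega)-\rho_{\bm}$ is the right one (it makes the interior term discardable with the correct sign), and the sign bookkeeping you flag as a worry does in fact come out as you describe.
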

We can prove Proposition \ref{prop:Kasue volume estimate}
only by replacing the role of the comparison result in the usual weighted case
with that of Lemma \ref{lem:Laplacian comparison} in the proof of Proposition \ref{prop:usual Kasue volume estimate}.
We omit the proof.

From Proposition \ref{prop:usual Kasue volume estimate},
the author \cite{Sa2} has derived the following estimate in the usual weighted case (see Theorems 8.4 and 8.5 in \cite{Sa2}):
\begin{thm}[\cite{Sa2}]\label{thm:usual p-Laplacian1}
Let $M$ be an $n$-dimensional,
connected complete Riemannian manifold with boundary,
and let $f:M\to \mathbb{R}$ be a smooth function.
Suppose that
$\bm$ is compact.
Let $p\in (1,\infty)$.
For $N \in [n,\infty]$
we suppose $\ric^{N}_{f,M}\geq 0$,
and $H_{f,\bm} \geq 0$.
For $D\in(0,\infty)$
we assume $D(M,\bm)\leq D$.
Then we have $\mu_{f,1,p}(M)\geq (pD)^{-p}$.
\end{thm}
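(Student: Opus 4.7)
The plan is to deduce the eigenvalue lower bound directly from the volume estimate in Proposition \ref{prop:usual Kasue volume estimate} via a layer-cake/coarea argument. By the definition of $\mu_{f,1,p}(M)$ as an infimum of Rayleigh quotients over $W^{1,p}_{0}(M,m_{f})\setminus\{0\}$, it suffices, by density, to show $R_{f,p}(\psi)\geq (pD)^{-p}$ for every non-negative smooth function $\psi$ on $M$ whose support is compact and contained in $\inte M$.

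First I would fix such a $\psi$ and, for $t>0$, consider the super-level set $\Omega_{t}:=\{p\in M \mid \psi(p)>t\}$. Since $\mathrm{supp}\,\psi \subset \inte M$, one has $\partial \Omega_{t}\cap \bm=\emptyset$. By Sard's theorem, $\partial \Omega_{t}=\{\psi=t\}$ is a smooth hypersurface for almost every $t$, and by the definition of $D(M,\bm)$ together with $D(M,\bm)\leq D$ we have $\delta_{2}(\Omega_{t})-\delta_{1}(\Omega_{t})\leq D$. Proposition \ref{prop:usual Kasue volume estimate} then yields
\begin{equation*}
m_{f}(\Omega_{t})\leq D\cdot m_{f,\partial \Omega_{t}}(\partial \Omega_{t})
\end{equation*}
for almost every $t>0$.

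Next I would combine this with two standard identities. The layer-cake formula gives
\begin{equation*}
\int_{M}\psi^{p}\,dm_{f}=p\int_{0}^{\infty}t^{p-1}\,m_{f}(\Omega_{t})\,dt,
\end{equation*}
and the weighted coarea formula yields
\begin{equation*}
\int_{0}^{\infty}t^{p-1}\,m_{f,\partial \Omega_{t}}(\partial \Omega_{t})\,dt=\int_{M}\psi^{p-1}\,\Vert \nabla \psi\Vert \,dm_{f}.
\end{equation*}
Chaining these with the volume estimate above gives
\begin{equation*}
\int_{M}\psi^{p}\,dm_{f}\leq pD\int_{M}\psi^{p-1}\,\Vert \nabla \psi\Vert \,dm_{f}.
\end{equation*}
Applying H\"older's inequality with exponents $p/(p-1)$ and $p$ to the right hand side, and then dividing by $\left(\int_{M}\psi^{p}\,dm_{f}\right)^{(p-1)/p}$, I obtain
\begin{equation*}
\left(\int_{M}\psi^{p}\,dm_{f}\right)^{1/p}\leq pD\left(\int_{M}\Vert \nabla \psi\Vert^{p}\,dm_{f}\right)^{1/p},
\end{equation*}
which is exactly $R_{f,p}(\psi)\geq (pD)^{-p}$, completing the proof.

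The main obstacle is purely technical: justifying the use of the Heintze–Karcher type volume estimate on almost every super-level set (handling the Sard-negligible set of critical values of $\psi$, and dealing with points where $\Vert \nabla \psi\Vert$ vanishes in the coarea formula). These are standard matters in non-linear eigenvalue problems and can be handled by approximating $\psi$ and passing to the limit, or by restricting the $t$-integrations to the full-measure set of regular values. Beyond that, each step is a routine application of an identity or inequality, so the only genuine input from the curvature and mean-curvature hypotheses enters through Proposition \ref{prop:usual Kasue volume estimate}.
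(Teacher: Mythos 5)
Your argument is correct and is essentially the intended one: the paper cites this result from \cite{Sa2} and (in the remark preceding Theorem \ref{thm:p-Laplacian1}) indicates that the proof there proceeds from Proposition \ref{prop:usual Kasue volume estimate}, for which the natural and standard deduction is exactly the layer-cake plus coarea plus H\"older chain you spell out. The only small points worth recording are that $\Omega_{t}$ need not be connected, so one applies Proposition \ref{prop:usual Kasue volume estimate} componentwise and sums (the components have disjoint boundaries for regular values $t$), and that the reduction to \emph{non-negative} smooth test functions is best justified by approximating $|\psi|$ by $(\psi^{2}+\epsilon^{2})^{1/2}-\epsilon$ rather than by a plain density statement.
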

The author \cite{Sa1} has shown Theorem \ref{thm:usual p-Laplacian1} in the unweighted case.

In our setting,
we can prove the following result
by using Proposition \ref{prop:Kasue volume estimate} instead of Proposition \ref{prop:usual Kasue volume estimate}
in the proof of Theorem \ref{thm:usual p-Laplacian1}.
The argument is in \cite{Sa2}.
\begin{thm}\label{thm:p-Laplacian1}
Let $M$ be a connected complete Riemannian manifold with boundary,
and let $f:M\to \mathbb{R}$ be a smooth function.
Suppose that
$\bm$ is compact.
Let $p\in (1,\infty)$,
and let $\kappa \in \mathbb{R}$ and $\lambda \in \mathbb{R}$ satisfy the subharmonic-condition.
For $N \in (-\infty,1]$
we suppose $\ric^{N}_{f,M}\geq \kappa$,
and $H_{f,\bm} \geq \lambda$.
For $D\in(0,\infty)$
we assume $D(M,\bm)\leq D$.
Then we have $\mu_{f,1,p}(M)\geq (pD)^{-p}$.
\end{thm}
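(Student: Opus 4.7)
The plan is to mimic the argument for Theorem \ref{thm:usual p-Laplacian1} from \cite{Sa2}, substituting our Proposition \ref{prop:Kasue volume estimate} for the usual-weighted volume estimate (Proposition \ref{prop:usual Kasue volume estimate}). The key intermediate step is an $L^{1}$-type Poincar\'e inequality: for every smooth function $\psi:M\to\mathbb{R}$ whose support is compact and contained in $\inte M$,
\begin{equation*}
\int_{M}\,|\psi|\,d\,m_{f}\leq D\,\int_{M}\,\Vert\nabla\psi\Vert\,d\,m_{f}.
\end{equation*}
Once this is established, a standard chain-rule and H\"older argument upgrades it to the $L^{p}$-estimate needed for $\mu_{f,1,p}(M)\geq(pD)^{-p}$.

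To prove the $L^{1}$-inequality, I would argue by the layer-cake formula and the coarea formula. For almost every $t\in(0,\infty)$, Sard's theorem guarantees that $\Omega_{t}:=\{\,|\psi|>t\,\}$ is a relatively compact domain whose boundary $\partial\Omega_{t}$ is a smooth hypersurface contained in $\inte M$, so $\partial\Omega_{t}\cap\bm=\emptyset$. Since $\bm$ is compact and $\dm\leq D$, we have $\delta_{2}(\Omega_{t})-\delta_{1}(\Omega_{t})\leq\dm\leq D$. Proposition \ref{prop:Kasue volume estimate} then yields
\begin{equation*}
m_{f}(\Omega_{t})\leq D\,m_{f,\partial\Omega_{t}}(\partial\Omega_{t})
\end{equation*}
for a.e.~$t$. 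Combining the layer-cake identity $\int_{M}|\psi|\,d\,m_{f}=\int_{0}^{\infty}m_{f}(\Omega_{t})\,dt$ with the weighted coarea identity $\int_{0}^{\infty}m_{f,\partial\Omega_{t}}(\partial\Omega_{t})\,dt=\int_{M}\Vert\nabla\psi\Vert\,d\,m_{f}$ produces the desired $L^{1}$-inequality.

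For the passage to general $p\in(1,\infty)$, I would apply the $L^{1}$-inequality to the Lipschitz function $|\psi|^{p}$ (which can be handled by standard approximation, as $\nabla|\psi|^{p}=p|\psi|^{p-1}\nabla|\psi|$ almost everywhere), obtaining
\begin{equation*}
\int_{M}|\psi|^{p}\,d\,m_{f}\leq pD\int_{M}|\psi|^{p-1}\Vert\nabla\psi\Vert\,d\,m_{f}.
\end{equation*}
H\"older's inequality with conjugate exponents $p/(p-1)$ and $p$ then gives
\begin{equation*}
\int_{M}|\psi|^{p}\,d\,m_{f}\leq pD\,\Bigl(\int_{M}|\psi|^{p}\,d\,m_{f}\Bigr)^{(p-1)/p}\Bigl(\int_{M}\Vert\nabla\psi\Vert^{p}\,d\,m_{f}\Bigr)^{1/p},
\end{equation*}
from which $R_{f,p}(\psi)\geq(pD)^{-p}$ follows. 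Taking the infimum over all non-zero compactly-supported smooth $\psi$, and using density in $W^{1,p}_{0}(M,m_{f})$, completes the proof.

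The only conceptually substantive step is the volume estimate of Proposition \ref{prop:Kasue volume estimate}, which has already been asserted in the excerpt; the remaining arguments (coarea, layer-cake, H\"older) are routine. Thus the main obstacle was already resolved in Proposition \ref{prop:Kasue volume estimate} through Lemma \ref{lem:Laplacian comparison}, and the present proof is, as the excerpt indicates, essentially a transcription of the argument in \cite{Sa2} with the comparison input changed.
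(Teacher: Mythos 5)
Your proposal is correct and reproduces precisely the argument the paper is invoking from \cite{Sa2} (and, in the unweighted case, from \cite{K3} and \cite{Sa1}): the layer-cake and coarea formulas convert the Heintze--Karcher-type estimate of Proposition \ref{prop:Kasue volume estimate} into the $L^{1}$-Poincar\'e inequality, and the substitution $\psi\mapsto|\psi|^{p}$ together with H\"older's inequality yields the $L^{p}$-bound $R_{f,p}(\psi)\geq (pD)^{-p}$. One minor point to record: the superlevel set $\{|\psi|>t\}$ need not be connected, so Proposition \ref{prop:Kasue volume estimate} should be applied to each of its connected components (whose boundaries are disjoint for regular values $t$) and the resulting inequalities summed.
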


\end{document}